\newcommand{\noun}[1]{\textsc{#1}}
\numberwithin{equation}{section}
\numberwithin{figure}{section}
\theoremstyle{plain}
\newtheorem{thm}{\protect\theoremname}[section]
  \theoremstyle{definition}
  \newtheorem{defn}[thm]{\protect\definitionname}
  \theoremstyle{remark}
  \newtheorem{rem}[thm]{\protect\remarkname}
  \theoremstyle{plain}
  \newtheorem{prop}[thm]{\protect\propositionname}
  \theoremstyle{plain}
  \newtheorem{cor}[thm]{\protect\corollaryname}
  \theoremstyle{plain}
  \newtheorem{question}[thm]{\protect\questionname}
  \theoremstyle{definition}
  \newtheorem{example}[thm]{\protect\examplename}
  \theoremstyle{plain}
  \newtheorem{lem}[thm]{\protect\lemmaname}
  \theoremstyle{remark}
  \newtheorem*{claim*}{\protect\claimname}
\newcommand{\strong}[1]{\textbf{#1}}
\DeclareMathOperator{\dom}{dom}
\DeclareMathOperator{\meas}{meas}
\newcommand{\name}{\mathsf{name}}
\subjclass[2010]{03D32,68Q30}
  \providecommand{\claimname}{Claim}
  \providecommand{\corollaryname}{Corollary}
  \providecommand{\definitionname}{Definition}
  \providecommand{\examplename}{Example}
  \providecommand{\lemmaname}{Lemma}
  \providecommand{\propositionname}{Proposition}
  \providecommand{\questionname}{Question}
  \providecommand{\remarkname}{Remark}
\providecommand{\theoremname}{Theorem}
\begin{document}

\title[Computable randomness and betting]{Computable randomness and betting for computable probability spaces}

\thanks{This work has been partially supported by NSF grant DMS1068829.}
\begin{abstract}
Unlike Martin-Löf randomness and Schnorr randomness, computable randomness
has not been defined, except for a few ad hoc cases, outside of Cantor
space. This paper offers such a definition (actually, several equivalent
definitions), and further, provides a general method for abstracting
``bit-wise'' definitions of randomness from Cantor space to arbitrary
computable probability spaces. This same method is also applied to
give machine characterizations of computable and Schnorr randomness
for computable probability spaces, extending the previously known
results. The paper contains a new type of randomness---endomorphism
randomness---which the author hopes will shed light on the open question
of whether Kol\-mo\-go\-rov-Love\-land randomness is equivalent
to Martin-Löf randomness. The last section contains ideas for future
research.
\end{abstract}

\maketitle
\tableofcontents{}


\section{Introduction\label{sec:Introduction}}

The subjects of measure theory and probability are filled with a number
of theorems stating that some property holds ``almost everywhere''
or ``almost surely.'' Informally, these theorems state that if one
starts with a random point, then the desired result is true. The field
of algorithmic randomness has been very successful in making this
notion formal: by restricting oneself to computable tests for non-randomness,
one can achieve a measure-one set of points that behave as desired.
The most prominent such notion of randomness is Martin-Löf randomness.
However, Schnorr \cite{Schnorr1971} gave an argument---which is now
known as Schnorr's Critique---that Martin-Löf randomness does not
have a sufficiently computable characterization. He offered two weaker-but-more-computable
alternatives: Schnorr randomness and computable randomness. All three
randomness notions are interesting and robust, and further each has
been closely linked to computable analysis, see for example V'yugin
\cite{Vcprimeyugin1998}; Gács, Hoyrup and Rojas \cite{Gacs2011};
Pathak, Rojas and Simpson \cite{Pathak:2014fk}; and Brattka, Miller
and Nies \cite{Brattka2011}. 

Computable randomness, however, is the only one of the three that
has not been defined for arbitrary computable probability spaces.
The usual definition is specifically for Cantor space (i.e.~the space
$2^{\omega}$ of infinite binary sequences), or by analogy, spaces
such as $3^{\omega}$. Namely, a sequence $x\in2^{\omega}$ is said
to be computably random (with respect to the fair-coin measure) if,
roughly speaking, one cannot win arbitrarily large amounts of money
using a computable betting strategy to gamble on the bits of $x$.
(See Section~\ref{sec:CR-on-Cantor} for a formal definition.) While
it is customary to say a real $x\in[0,1]$ is computably random if
its binary expansion is computably random with respect to $2^{\omega}$,
it was only recently shown \cite{Brattka2011} that this is the same
as saying that, for example, the ternary expansion of $x$ is computably
random with respect to $3^{\omega}$. In other words, computable randomness
is base invariant.

In this paper, I use a method for extending the ``bit-wise'' definitions
of randomness with respect to Cantor space to arbitrary computable
probability spaces. The method is based on previous methods given
by Gács \cite{Gacs2005} and later Hoyrup and Rojas \cite{Hoyrup2009}
of dividing a space into cells. However, to successfully extend a
randomness notion (such that the new definition agrees with the former
on $2^{\omega}$), one must show a property similar to base invariance.
I do this for computable randomness.

An outline of the paper is as follows. Section~\ref{sec:CR-on-Cantor}
presents computable randomness with respect to a computable measure $\mu$ on $2^{\omega}$.
In Schnorr's original definition \cite{Schnorr1971} $\mu$ was the
fair-coin measure. There is a prima facie obvious generalization to
all computable measures $\mu$, see for example Muchnik, Semenov and
Uspensky \cite{Muchnik:1998rt} and Bienvenu and Merkle \cite{Bienvenu2009}.
However, as I will show, a bit of care is needed in the pathological
case where $\mu$ has null open sets.

Section~\ref{sec:Background} gives background on computable analysis,
computable probability spaces, and algorithmic randomness.

Section~\ref{sec:Almost-everywhere-decidable} presents the concepts
of an almost-everywhere decidable set (due to Hoyrup and Rojas \cite{Hoyrup2009})
and an a.e.~decidable cell decomposition (which is similar to work
of Hoyrup and Rojas \cite{Hoyrup2009} and Gács \cite{Gacs2005}).
Recall that the topology of $2^{\omega}$ is generated by the collection
of basic open sets of the form $[\sigma]^{\prec}=\{x\in2^{\omega}\mid\sigma\prec x\}$
where $\sigma\prec x$ means $\sigma$ is an initial segment of $x$.
Further, any Borel measure $\mu$ on $2^{\omega}$ is determined by
the values $\mu([\sigma]^{\prec})$. The main idea of this paper is
that for a computable probability space $(\mathcal{X},\mu)$ one can
replace the basic open sets of $2^{\omega}$ (which are decidable)
with an indexed family of ``almost-everywhere decidable'' sets $\{A_{\sigma}\}_{\sigma\in2^{<\omega}}$
which behave in much that same way. I call each such indexed family
a cell decomposition of the space. This allows one to effortlessly
transfer a definition from Cantor space to any computable probability
space.

Section~\ref{sec:CR-on-prob-spaces} applies this method to computable
randomness, giving a variety of equivalent definitions based on martingales
and other tests. More importantly, I show this definition is invariant
under the choice of cell decomposition. Similar to the base-invariance
proof of Brattka, Miller and Nies \cite{Brattka2011}, my proof uses
computable analysis. However, their method does not apply here. (Their
proof uses differentiability and the fact that every atomless measure
on $[0,1]$ is naturally equivalent to a measure on $2^{\omega}$.
The situation is more complicated in the general case. One does not
have differentiability, and one must consider absolutely-continuous
measures instead of mere atomless ones.)

Section~\ref{sec:Kolmogorov-complexity-and-randomness} gives a machine
characterization of computable and Schnorr randomness with respect
to computable probability spaces. This combines the machine characterizations
of computable randomness and Schnorr randomness (respectively, Mihailovi\'{c}
\cite[Thereom 7.1.25]{Downey2010} and Downey, Griffiths, and LaForte
\cite{Downey2004a}) with the machine characterization of Martin-Löf
randomness with respect to arbitrary computable probability spaces
(Gács \cite{Gacs:1980fj} and Hoyrup and Rojas \cite{Hoyrup2009}).

Section~\ref{sec:CR-and-isomorphisms} shows a correspondence between
cell decompositions of a computable probability space $(\mathcal{X},\mu)$
and isomorphisms from $(\mathcal{X},\mu)$ to Cantor space. I also
show computable randomness is preserved by isomorphisms between computable
probability spaces, giving yet another characterization of computable
randomness. However, unlike other notions of randomness, computable
randomness is not preserved by mere morphisms (almost-everywhere computable,
measure-preserving maps).

Section~\ref{sec:Generalizing-randomness} gives three equivalent
methods to extend a randomness notion to all computable probability
measures. It also gives the conditions for when this new randomness
notion agrees with the original one.

Section~\ref{sec:Betting-strategies-and-KL} asks how the method
of this paper applies to Kol\-mo\-go\-rov-Love\-land randomness,
another notion of randomness defined by gambling. The result is that
the natural extension of Kol\-mo\-go\-rov-Love\-land randomness
to arbitrary computable probability measures is Martin-Löf randomness.
However, I do not answer the important open question as to whether
Kol\-mo\-go\-rov-Love\-land randomness and Martin-Löf randomness
are equivalent.

Section~\ref{sec:Endomorphism-randomness} explores a new notion
of randomness between Martin-Löf randomness and Kol\-mo\-go\-rov-Love\-land
randomness, possibly equal to both. It is called endomorphism randomness.

Last, in Section~\ref{sec:Further-directions}, I suggest ways to
generalize the method of this paper to a larger class of isomorphisms
and cell decompositions. I also suggest methods for extending computable
randomness to a larger class of probability spaces, including non-computable
probability spaces, computable topological spaces, and measures defined
by $\pi$-systems. Drawing on Section~\ref{sec:Kolmogorov-complexity-and-randomness},
I suggest a possible definition of K-triviality for computable probability
spaces. Finally, I ask what can be known about the interplay between
randomness, morphisms, and isomorphisms.

\section{Computable randomness with respect to $2^{\omega}$\label{sec:CR-on-Cantor}}

Before exploring computable randomness with respect to arbitrary computable
probability spaces, a useful intermediate step will be to consider
computable probability measures on Cantor space.

We fix notation: $2^{<\omega}$ is the space of finite binary strings;
$2^{\omega}$ is the space of infinite binary sequences; $\varepsilon$
is the empty string; $|\sigma|$ is the length of $\sigma\in2^{<\omega}$;
$\sigma\prec\tau$ and $\sigma\prec x$ mean $\sigma$ is a proper
initial segment of $\tau\in2^{<\omega}$ or $x\in2^{\omega}$; and
$[\sigma]^{\prec}=\{x\in2^{\omega}\mid\sigma\prec x\}$ is a \noun{basic open set}
or \noun{cylinder set}. Also for $\sigma\in2^{<\omega}$ (or $x\in2^{\omega}$),
$\sigma(n)$ is the $n$th digit of $\sigma$ (where $\sigma(0)$
is the ``$0$th'' digit) and $\sigma\upharpoonright n=\sigma(0)\cdots\sigma(n-1)$.

It is assumed that the reader is familiar with partial computable
functions of type $g\colon{\subseteq{}}\mathbb{N}\rightarrow\mathbb{N}$.
(Here, $\mathbb{N}$ can be replaced by an indexed countable set such
as $\mathbb{Q}$, $\mathbb{N}^{<\omega}$, $2^{<\omega}$, etc.) The
\noun{domain} of $g$, written $\dom g$, is the subset of $\mathbb{N}$
for which the algorithm for $g$ halts. For completeness, I give definitions
for partial computable functions of higher type.
\begin{defn}
\label{def:part-fun-seq-to-seq}A partial function $g\colon{\subseteq{}}\mathbb{N}^{\omega}\rightarrow\mathbb{N}^{\omega}$
is \noun{partial computable} if there is a partial computable function
$h\colon{\subseteq{}}\mathbb{N}^{<\omega}\rightarrow\mathbb{N}^{<\omega}$
such that for all $a\in\mathbb{N}^{\omega}$, the following two conditions
hold.
\begin{enumerate}
\item $a\in\dom{g}$ if and only if all of the following hold

\begin{enumerate}
\item $a\upharpoonright n\in\dom{h}$ for all $n$,
\item $h(a\upharpoonright0)\preceq h(a\upharpoonright1)\preceq\ldots$ ,
and 
\item $\lim_{n}|h(a\upharpoonright n)|=\infty$.
\end{enumerate}
\item If $a\in\dom{g}$, then $g(a)=\lim_{n}h(a\upharpoonright n)$.
\end{enumerate}
\end{defn}
Similarly, one can define \noun{partial computable functions} of
type $g\colon{\subseteq{}}\mathbb{N}\rightarrow\mathbb{N}^{\omega}$
and $g\colon{\subseteq{}}\mathbb{N}\rightarrow\mathbb{Q}^{\omega}$
along with their domains.
\begin{defn}
\label{def:part-fun-N-to-R}A partial function $f\colon{\subseteq{}}\mathbb{N}\rightarrow\mathbb{R}$
is \noun{partial computable} if there is a partial computable function
$g\colon{\subseteq{}}\mathbb{N}\rightarrow\mathbb{Q}^{\omega}$ such
that for all $m\in\mathbb{N}$, both of the following hold.
\begin{enumerate}
\item $m\in\dom{f}$ if and only $m\in\dom{g}$.
\item If $m\in\dom{f}$, we have that $g(m)$ is a \noun{Cauchy name}
for $f(m)$---that is, $g(m)=a\in\mathbb{Q}^{\omega}$ where for all
$k<\ell$, $|a(\ell)-a(k)|\leq2^{-k}$ and $\lim_{i}a(i)=f(m)$.
\end{enumerate}
\noindent I will often say $f(n)$ is \noun{undefined} or $f(n)=\mathit{undefined}$,
to denote that $n\notin\dom f$.
\end{defn}
Typically, a \noun{martingale} (on the fair-coin probability measure)
is defined as a function $M\colon2^{<\omega}\rightarrow[0,\infty)$
such that the following property holds for each $\sigma\in2^{<\omega}$:
$M(\sigma)=\frac{1}{2}(M(\sigma0)+M(\sigma1))$. Such a martingale
can be thought of as a betting strategy on coin tosses: the gambler
starts with the value $M(\varepsilon)$ as her capital and bets on
fair coin tosses. Assuming the string $\sigma$ represents the sequence
of coin tosses she has seen so far, $M(\sigma0)$ is the resulting
capital she has if the next coin toss comes up tails, and $M(\sigma1)$
if heads. A martingale $M$ is said to be \noun{computable} if $M(\sigma)$
is computable uniformly from $\sigma$.

A martingale $M$ is said to \noun{succeed} on a sequence $x\in2^{\omega}$
if $\limsup_{n\rightarrow\infty}M(x\upharpoonright n)=\infty$, i.e.~the
gambler wins arbitrary large amounts of money using the martingale
$M$ while betting on the sequence $x$ of coin tosses. By Kolmogorov's
theorem (see \cite[Theorem 6.3.3]{Downey2010}), such a martingale
can only succeed on a measure-zero set of points. A sequence $x\in2^{\omega}$
is said to be \noun{computably random} (with respect to the fair-coin
probability measure) if there does not exist a computable martingale
$M$ which succeeds on $x$.
\begin{defn}
A finite Borel measure $\mu$ on $2^{\omega}$ is a \noun{computable measure}
if the measure $\mu([\sigma]^{\prec})$ of each basic open set is
computable uniformly from $\sigma$. Further, if $\mu(2^{\omega})=1$,
then we say $\mu$ is a \noun{computable probability measure} (on
$2^{\omega}$) and $(2^{\omega},\mu)$ is a \noun{computable probability space}.
\end{defn}
In this paper, measure always means a finite Borel measure. When convenient,
I will drop the brackets and write $\mu(\sigma)$ instead. By the
Carathéodory extension theorem, one may uniquely represent a computable
measure as a computable function $\mu\colon2^{<\omega}\rightarrow[0,\infty)$
such that
\[
\mu(\sigma0)+\mu(\sigma1)=\mu(\sigma)
\]
for all $\sigma\in2^{<\omega}$. I will often confuse a computable
measure on $2^{\omega}$ with its representation on $2^{<\omega}$.

The \noun{fair-coin probability measure} (or the \noun{Lebesgue measure}
on $2^{\omega}$) is the measure $\lambda$ on $2^{\omega}$, defined
by 
\[
\lambda(\sigma)=2^{-|\sigma|}.
\]
(The Greek letter $\lambda$ will always be the fair-coin measure
on $2^{\omega}$, except in a few examples where it is the Lebesgue
measure on $[0,1]^{d}$ or the uniform measure on $3^{\omega}$.)

It is well-known and easy to see that the computable martingales with
respect to $\lambda$ are exactly those of the form $M(\sigma)=\nu(\sigma)/\lambda(\sigma)$
for some computable measure $\nu$. This approach can be used to generalizing
computable randomness to all computable probability measures $\mu$.
\begin{defn}
\label{def:comp-random-cantor} A sequence $x\in2^{\omega}$ is \noun{computably random}
with respect to a computable probability measure $\mu$ if both of
the following conditions occur.
\begin{enumerate}
\item $\mu(x\upharpoonright n)>0$ for all $n$.
\item $\limsup_{n}\nu(x\upharpoonright n)/\mu(x\upharpoonright n)<\infty$
for all computable measures $\nu$.
\end{enumerate}
\end{defn}
Now we can give a martingale characterization of computable randomness.
Basically, we would like our martingale $M$ to satisfy the fairness
condition
\[
M(\sigma0)\mu(\sigma0)+M(\sigma1)\mu(\sigma1)=M(\sigma)\mu(\sigma).
\]
However, we must take care when $\mu(\sigma)=0$ for some $\sigma\in2^{<\omega}$.
(Measures for which $\mu(\sigma)>0$ for all $\sigma\in2^{<\omega}$
are known as \noun{strictly-positive measures}.) For this we will
need partial computable functions of type $M\colon{\subseteq{}}2^{<\omega}\rightarrow\mathbb{R}$
(as in Definition~\ref{def:part-fun-N-to-R}). Recall that $M(\sigma)=\textit{undefined}$
means that $\sigma\notin\dom M$.

\begin{defn}
\label{def:Martingales-Cantor}If $\mu$ is a computable probability
measure on $2^{\omega}$, then a \noun{martingale} $M$ (with respect
to the measure $\mu$) is a partial function $M\colon{\subseteq{}}2^{<\omega}\rightarrow[0,\infty)$
such that the following two conditions hold:
\begin{enumerate}
\item (Impossibility condition) If $\sigma\notin\dom M$ then $\mu(\sigma)=0$.
\item (Fairness condition) For all $\sigma\in2^{<\omega}$ 
\[
M(\sigma0)\mu(\sigma0)+M(\sigma1)\mu(\sigma1)=M(\sigma)\mu(\sigma)
\]
where $\mathit{undefined}\cdot0=0$.
\end{enumerate}

\noindent We say $M$ is an \noun{almost-everywhere computable martingale}
if $M$ is partial computable (as in Definition~\ref{def:part-fun-N-to-R}).
We say $M$ \noun{succeeds} on a sequence $x\in2^{\omega}$ if $\limsup_{n\rightarrow\infty}M(x\upharpoonright n)=\infty$.

\end{defn}

\begin{rem}
By the impossibility condition and the convention $\mathit{undefined}\cdot0=0$,
the fairness condition is well-defined. Indeed $M(\sigma)\mu(\sigma)=0$
whenever $\sigma\notin\dom M$. Also, by the impossibility condition,
$M$ must be total whenever $\mu$ is strictly positive. Therefore,
for the fair-coin measure $\lambda$, or any other strictly positive
measure $\mu$, almost-everywhere computable martingales are the same
as computable martingales (where $M$ is a total function).

Nonetheless, Definition \ref{def:Martingales-Cantor} is technically
in disagreement with the computability theory literature. In computability
theory, a ``martingale'' must be total, while a ``partial martingale''
may not be. However, the martingales in Definition \ref{def:Martingales-Cantor}
deserve the name ``martingale'' for three reasons. First, they agree
in spirit with the standard computability theory definition of martingale---the
only time $M$ is not defined is when an impossible event has occurred.
Second, they agree with the definition of martingale used in probability
theory. Namely, let $M_{n}(x)=M(x\upharpoonright n)$. If $M$ is
a martingale in the sense of Definition \ref{def:Martingales-Cantor},
then $(M_{n})$ is a martingale in the sense of probability theory
(see for example Williams \cite{Williams1991}). In particular, in
probability theory, a martingale is a sequence $(M_{n})$ of random
variables satisfying certain properties. A random variable is only
defined up to almost-everywhere equality. Therefore, it does not matter
if $M_{n}$ is undefined on a measure zero set. (On the other hand,
if $M$ was a partial martingale that was undefined on more that a
positive measure set, then $M_{n}$ would not be a well-defined random
variable and $(M_{n})$ would not be a probability theoretic martingale.)
Third, martingales in the sense of Definition \ref{def:Martingales-Cantor}
are needed to define almost-everywhere computable martingales, and
a number of results in this paper require the use of almost-everywhere
computable martingales. For example, the next proposition shows a
correspondence between computable measures and almost-everywhere computable
martingales. This correspondence does not hold for the usual definition
of computable martingale where $M$ must be total (see Bienvenu and
Merkle \cite[Remark~8]{Bienvenu2009}). \end{rem}
\begin{prop}
\label{prop:mart-meas-correspondence}Let $\mu$ be a computable probability
measure on $2^{\omega}$. (Use the conventions $\mathit{undefined}\cdot0=0$
and $x/0=\mathit{undefined}$ for all $x\in\mathbb{R}$.)
\begin{enumerate}
\item If $M$ is an martingale on $\mu$ (in the sense of Definition \ref{def:Martingales-Cantor}),
then $\nu(\sigma)=M(\sigma)\mu(\sigma)$ defines a measure. If $M$
is almost-everywhere computable, then $\nu$ is computable uniformly
from $M$.
\item If $\nu$ is a measure, then $M(\sigma)=\nu(\sigma)/\mu(\sigma)$
defines a martingale (in the sense of Definition \ref{def:Martingales-Cantor}).
If $\nu$ is computable, then $M$ is almost-everywhere computable
uniformly from $\nu$.
\end{enumerate}
\end{prop}
\begin{proof}
(1) Let $M$ be a martingale. By the fairness condition, $\nu(\sigma)=M(\sigma)\mu(\sigma)$
defines a measure. To compute $\nu$ from $M$, it is enough to compute
$\nu(\sigma)$ by recursion on the length of $\sigma$ as follows.
Set $\nu(\varepsilon)=M(\varepsilon)$. (Note $\varepsilon\in\dom M$
since $\mu(\varepsilon)=1$.) To compute, say, $\nu(\sigma0)$ from
$\nu(\sigma)$, use the (overspecified) formula
\[
\nu(\sigma0)=\begin{cases}
M(\sigma0)\mu(\sigma0) & \text{if }\mu(\sigma0)>0\\
\nu(\sigma)-M(\sigma1)\mu(\sigma1) & \text{if }\mu(\sigma1)>0\\
0 & \text{otherwise}
\end{cases}\ .
\]
This is computable as follows: Wait until it becomes clear that either
$\mu(\sigma0)>0$ or $\mu(\sigma1)>0$, and then use the corresponding
formula. (If both $\mu(\sigma0)>0$ and $\mu(\sigma1)>0$ then both
formulas give the same result.) In the case that $\mu(\sigma)=\mu(\sigma0)=\mu(\sigma1)=0$,
the bounds $0\leq\nu(\sigma0)\leq\nu(\sigma)$ ``squeeze'' $\nu(\sigma0)$
to $0$.

(2) Let $\nu$ be a measure. Then $M$ can be computed from $\nu$
by waiting until $\mu(\sigma)>0$, else $M(\sigma)$ is never defined.
(Here we need the impossibility condition.)\end{proof}
\begin{cor}
Let $\mu$ be a computable measure on $2^{\omega}$. Then x is computably
random with respect to $\mu$ if and only if $\mu(x\upharpoonright n)>0$
for all $n$ and there is no almost-everywhere computable martingale
$M$ which succeeds on $x$.\end{cor}
\begin{rem}
Instead of using almost-everywhere computable martingales, one can
also characterize computable randomness via martingales taking values
in the extended real numbers, i.e.~$M\colon2^{<\omega}\rightarrow[0,\infty]$.
(Use the usual measure-theoretic convention that $\infty\cdot0=0$.)
Fix a computable probability measure $\mu$. Consider the martingale
$M_{0}$ defined by $M_{0}(\sigma)=\lambda(\sigma)/\mu(\sigma)$ where
$\lambda$ is the fair-coin measure. Since, $\lambda(\sigma)>0$ for
all $\sigma$, we have that $M_{0}$ is computable on the extended
real numbers (where $x/0=\infty$ if $x>0$). Notice $M_{0}(\sigma)=\infty$
if and only if $\mu(\sigma)=0$, hence one can ``forget'' the infinite
values to get a finite-valued almost-everywhere computable martingale
$M_{1}$ as in Definition~\ref{def:Martingales-Cantor}. For any
$x\in2^{\omega}$, if $M_{0}$ succeeds on $x$ then either $\mu(x\upharpoonright n)=0$
for some $n$ or $M_{1}(x)$ succeeds on $x$. In either case, $x$
is not computably random with respect to $\mu$. Conversely, if $x\in2^{\omega}$
is not computably random, either $M_{0}$ succeeds on $x$ or there
is some finite-valued almost-everywhere computable martingale $M$
as in Definition~\ref{def:Martingales-Cantor} which succeeds on
$x$. In the later case, $N=M+M_{0}$ is a martingale computable on
the extended real numbers which also succeeds on $x$. However, Proposition~\ref{prop:mart-meas-correspondence}
does not hold for total computable martingales, even if they take
infinite values (Bienvenu and Merkle \cite[Remark~8]{Bienvenu2009}).
Therefore, this paper will only use the finite-valued almost-everywhere
computable martingales in Definition~\ref{def:Martingales-Cantor}.
\end{rem}
I leave as an open question whether computable randomness with respect
to non-strictly positive measures can be characterized by martingales
without using partial functions or infinite values.
\begin{question}
Let $\mu$ be a computable probability measure on $2^{\omega}$, and
assume $x$ is not computably random with respect to $\mu$. Is there
necessary a computable martingale $M\colon2^{<\omega}\rightarrow[0,\infty)$
with respect to $\mu$ which is \emph{total}, \emph{finite-valued}
and succeeds on $x$?\footnote{After reading a preprint of this article, Turetsky \cite{Turetsky:2012kx}
negatively answered this question. He constructed a computable measure
$\mu$ and a sequence $x\in2^{\omega}$ which is not computably random
with respect to $\mu$, but such that no total, finite-valued computable
martingale succeeds on $x$. This shows that almost-everywhere computable
martingales (or martingales with infinite values) are necessary to
characterize computable randomness on such measures.}\end{question}
\begin{rem}
Martingales on $2^{\omega}$ were developed by Ville, and were later
used by Schnorr \cite{Schnorr1971} to define the notion now known
as computable randomness---with respect to the fair-coin measure.
(See Bienvenu, Shafer and Shen \cite{Bienvenu:2009uq} for a history.)
It is not clear to me who was the first to develop computable randomness
with respect to other measures. Schnorr and Fuchs \cite{Schnorr:1977fk}
used ratios of measures, similar to Definition~\ref{def:comp-random-cantor},
to define a related randomness notion, Schnorr randomness, with respect
to other measures. Muchnik, Semenov and Uspensky \cite{Muchnik:1998rt}
considered (total) computable martingales on a smaller class of computable
measures $\mu$, those for which $\sigma\mapsto\mu(\sigma)$ is a
computable map of type $2^{<\omega}\rightarrow\mathbb{Q}$. (For such
measures, $\mu(\sigma)=0$ is decidable. Therefore, there is a correspondence
between computable measures $\nu$ and total computable martingales
$M$. Namely, let $M(\sigma)=\nu(\sigma)/\mu(\sigma)$ when $\mu(\sigma)>0$
and $M(\sigma)=|\sigma|$ otherwise.)  Bienvenu and Merkle \cite{Bienvenu2009}
considered computable randomness with respect to computable measures
$\mu$, defined via (total) computable martingales. Most of their
results are for strictly positive  (``nowhere vanishing'' in their
paper) measures $\mu$. This section contributes a martingale characterization
of computable randomness which works for all computable measures $\mu$,
including those which are not strictly positive.
\end{rem}
See Downey and Hirschfelt \cite[Section 7.1]{Downey2010} and Nies
\cite[Chapter 7]{Nies2009} for more information on computable randomness
with respect to $(2^{\omega},\lambda)$.

\section{\label{sec:Background}Computable probability spaces and algorithmic
randomness}

In this section I give some background on computable analysis, computable
probability spaces, and algorithmic randomness.

\subsection{Computable analysis and computable probability spaces\label{sub:Comp-prob-spaces}}

Here I pre\-sent computable Polish spaces and computable probability
spaces. For a more detailed exposition of the same material see Hoyrup
and Rojas \cite{Hoyrup2009}. This paper assumes some familiarity
with basic computability theory and computable analysis, as in Pour
El and Richards \cite{Pour-El1989}, Weihrauch \cite{Weihrauch2000},
or Brattka et al.\ \cite{Brattka2008}.
\begin{defn}
A \noun{computable Polish space} (or \noun{computable metric space})
is a triple $(X,d,S)$ such that
\begin{enumerate}
\item $X$ is a complete metric space with metric $d\colon X\times X\rightarrow[0,\infty)$.
\item $S=\{a_{i}\}_{i\in\mathbb{N}}$ is a countable dense subset of $X$
(the \noun{simple points} of $\mathcal{X}$).
\item The distance $d(a,b)$ is computable uniformly from $a,b\in S$.\footnote{That is, $d(a_{i},a_{j})$ is computable uniformly from $i$ and $j$.
Equivalently, $d\colon S\times S\rightarrow\mathbb{R}$ is a computable
function. (Here the countable set $S$ is identified with the natural
numbers.)}
\end{enumerate}

\noindent A \noun{Cauchy name} for a point $x\in X$ is a sequence
$a\in S^{\omega}$ such that $d(a(\ell),a(k))\leq2^{-k}$ for all
$\ell\geq k$ and $x=\lim_{k}a(k)$, and $x$ is \noun{computable}
if it has a computable Cauchy name.

\end{defn}
The \noun{basic open balls} are the sets of the form $B(a,r)=\{x\in X\mid d(x,a)<r\}$
where $a\in S$ and $r>0$ is rational. The $\Sigma_{1}^{0}$ sets
(\noun{effectively open sets}) are those of the form $\bigcup_{(a,q)\in I}B(a,q)$
for a computably enumerable set $I\subseteq S\times\mathbb{Q}_{>0}$;
$\Pi_{1}^{0}$ sets (\noun{effectively closed sets}) are the complements
of $\Sigma_{1}^{0}$ sets; $\Sigma_{2}^{0}$ sets are computable unions
of $\Pi_{1}^{0}$ sets; and $\Pi_{2}^{0}$ sets are computable intersections
of $\Sigma_{1}^{0}$ sets. A function $f\colon\mathcal{X}\rightarrow\mathbb{R}$
is \noun{computable (-ly continuous)} if for each $\Sigma_{1}^{0}$
set $U$ in $\mathbb{R}$, the set $f^{-1}(U)$ is a $\Sigma_{1}^{0}$
subset of $\mathcal{X}$ (uniformly in $U$), or equivalently, there
is an algorithm which sends every Cauchy-name of $x$ to a Cauchy-name
of $f(x)$. A function $f\colon\mathcal{X}\rightarrow[0,\infty]$
is \noun{lower semicomputable} if it is the supremum of a computable
sequence of computable functions $f_{n}\colon\mathcal{X}\rightarrow[0,\infty)$.

A real $x$ is said to be \noun{lower (upper) semicomputable} if
$\{q\in\mathbb{Q}\mid q<x\}$ (respectively $\{q\in\mathbb{Q}\mid q>x\}$)
is a c.e.~set.
\begin{defn}
\label{def:comp_prob_meas}If $\mathcal{X}=(X,d,S)$ is a computable
Polish space, then a Borel measure $\mu$ is a \noun{computable measure}
on $\mathcal{X}$ if the value $\mu(X)$ is computable, and for each
$\Sigma_{1}^{0}$ set $U$, the value $\mu(U)$ is lower semicomputable
uniformly from $U$. A \noun{computable probability space} is a pair
$(\mathcal{X},\mu)$ where $\mathcal{X}$ is a computable Polish space,
$\mu$ is a computable measure on $\mathcal{X}$, and $\mu(\mathcal{X})=1$. 
\end{defn}
While this definition of computable probability space may seem ad
hoc, it turns out to be equivalent to a number of other definitions.
In particular, the computable probability measures on $\mathcal{X}$
are exactly the computable points in the space of probability measures
under the Prokhorov metric. Also, a probability space is computable
precisely if the integral operator is a computable operator on computable
functions $f\colon\mathcal{X}\rightarrow[0,1]$. See Hoyrup and Rojas
\cite{Hoyrup2009} and Schröder \cite{Schroder:2007kx} for details.

I will often confuse a metric space or a probability space with its
set of points, e.g.\ writing $x\in\mathcal{X}$ or $x\in(\mathcal{X},\mu)$
to mean that $x\in X$ where $\mathcal{X}=(X,d,S)$.

\subsection{Algorithmic randomness\label{sub:algorithmic-randomness}}

In this section I give background on algorithmic randomness. Namely,
I present three types of tests for Martin-Löf and Schnorr randomness.
In Section~\ref{sec:CR-on-prob-spaces}, I will generalize these
tests to computable randomness, building on the work of Merkle, Mihailovi\'{c}
and Slaman \cite{Merkle2006} (which is similar to that of Downey,
Griffiths and LaForte \cite{Downey2004a}). I also present Kurtz randomness.

Throughout this section, let $(\mathcal{X},\mu)$ be a computable
probability space.
\begin{defn}
A \noun{Martin-Löf test} (with respect to $(\mathcal{X},\mu)$) is
a computable sequence of $\Sigma_{1}^{0}$ sets $(U_{n})$ such that
$\mu(U_{n})\leq2^{-n}$ for all $n$. A \noun{Schnorr test} is a
Martin-Löf test such that $\mu(U_{n})$ is also computable uniformly
from $n$. We say $x$ is \noun{covered by} the test $(U_{n})$ if
$x\in\bigcap_{n}U_{n}$.
\end{defn}

\begin{defn}
We say $x\in\mathcal{X}$ is \noun{Martin-Löf random} (with respect
to $(\mathcal{X},\mu)$) if there is no Martin-Löf test which covers
$x$. We say $x$ is \noun{Schnorr random} if there is no Schnorr
test which covers $x$. We say $x$ is \noun{Kurtz random} (or \noun{weak random})
if $x$ is not in any null $\Pi_{1}^{0}$ set (or equivalently a null
$\Sigma_{2}^{0}$ set).
\end{defn}

It is easy to see that for all computable probability spaces 
\[
\text{Martin-Löf random}\ \rightarrow\ \text{Schnorr random}\ \rightarrow\ \text{Kurtz random}.
\]
It is also well-known (see \cite{Downey2010,Nies2009}) on $(2^{\omega},\lambda)$
that 
\begin{equation}
\text{Martin-Löf random}\ \rightarrow\ \text{Computably random}\ \rightarrow\ \text{Schnorr random}\ \rightarrow\ \text{Kurtz random.}\label{eq:ran-hierarchy}
\end{equation}
In the next section, after defining computable randomness with respect
to computable probability spaces, I will show (\ref{eq:ran-hierarchy})
holds for all computable probability spaces.

Next, I mention two other useful tests.
\begin{defn}
\label{def:solovay-integral}A \noun{Solovay test} is a computable
sequence $(U_{n})$ of $\Sigma_{1}^{0}$ sets such that $\sum_{n}\mu(U_{n})<\infty$.
We say $x$ is \noun{Solovay covered} by a Solovay test $(U_{n})$
if $x\in U_{n}$ for infinitely many $n$. An \noun{integral test}
is a lower semicomputable function $g\colon\mathcal{X}\rightarrow[0,\infty]$
such that $\int g\,d\mu<\infty$.\end{defn}
\begin{thm}
For $x\in\mathcal{X}$, the following are equivalent.
\begin{enumerate}
\item $x$ is Martin-Löf random.
\item $x$ is not Solovay covered by any Solovay test.
\item $g(x)<\infty$ for all integral tests.
\end{enumerate}
\end{thm}

\begin{thm}
\label{thm:schnorr-tests}For $x\in\mathcal{X}$, the following are
equivalent.
\begin{enumerate}
\item $x$ is Schnorr random.
\item $x$ is not Solovay covered by any Solovay test $(U_{n})$ such that
$\sum_{n}\mu(U_{n})$ is computable.
\item $g(x)<\infty$ for all integral tests $g$ such that $\int\!g\,d\mu=1$.
\end{enumerate}
\end{thm}
\begin{rem}
For a history of the tests for Schnorr and Martin-Löf randomness see
Downey and Hirschfelt \cite{Downey2010}. The integral test characterization
for Schnorr randomness is due to Miyabe \cite{Miyabe:2013uq} and
was also independently communicated to me by Hoyrup and Rojas.
\end{rem}
I will give Solovay and integral test characterizations of computable
randomness in Section~\ref{sec:CR-on-prob-spaces}.

There are also martingale characterizations of Martin-Löf and Schnorr
randomness with respect to $2^{\omega}$, but they will not be needed.

\section{\label{sec:Almost-everywhere-decidable}Almost-everywhere decidable
cell decompositions}

The main idea of this paper is that ``bit-wise'' definitions of
randomness with respect to $2^{\omega}$, such as computable randomness,
can be extended to arbitrary computable probability spaces by replacing
the basic open sets $[\sigma]^{\prec}$ on $2^{\omega}$ with an indexed
family $\{A_{\sigma}\}_{\sigma\in2^{<\omega}}$ of a.e.\ decidable
sets. This idea was introduced by Gács \cite{Gacs2005} and further
developed by Hoyrup and Rojas \cite{Hoyrup2009}. My method is based on
theirs, although the presentation and definitions differ on a few
key points.

Recall that a set $A\subseteq\mathcal{X}$ is \noun{decidable} if
both $A$ and its complement $\mathcal{X}\smallsetminus A$ are $\Sigma_{1}^{0}$
sets (equivalently $A$ is both $\Sigma_{1}^{0}$ and $\Pi_{1}^{0}$).
The intuitive idea is that from the code for any $x\in\mathcal{X}$,
one may effectively decide if $x$ is in $A$ or its complement. On
$2^{\omega}$, the cylinder sets $[\sigma]^{\prec}$ are decidable.
Unfortunately, a space such as $\mathcal{X}=[0,1]$ has no non-trivial
clopen sets, and therefore no non-trivial decidable sets. However,
using the idea that null measure sets can be ignored, we can use ``almost-everywhere
decidable sets'' instead.
\begin{defn}[Hoyrup and Rojas \cite{Hoyrup2009}]
\label{def:ae-decidable}Let $(\mathcal{X},\mu)$ be a computable
probability space. A pair $U,V\subseteq\mathcal{X}$ is a \noun{$\mu$-a.e.~decidable pair}
if
\begin{enumerate}
\item $U$ and $V$ are both $\Sigma_{1}^{0}$ sets,
\item $U\cap V=\varnothing$, and
\item $\mu(U\cup V)=1$.
\end{enumerate}

\noindent A set $A$ is a \noun{$\mu$-a.e.~decidable set} if there
is a $\mu$-a.e.~decidable pair $U,V$ such that $U\subseteq A\subseteq\mathcal{X}\smallsetminus V$.
The code for the $\mu$-a.e.~decidable set $A$ is the pair of codes
for the $\Sigma_{1}^{0}$ sets $U$ and $V$. 

\end{defn}
Hoyrup and Rojas \cite{Hoyrup2009} also required that $U\cup V$
be dense for technical reasons. We will relax this condition, since
it is not necessary. They also use the terminology ``almost decidable
set''.

Definition~\ref{def:ae-decidable} is an effectivization of \noun{$\mu$-continuity set},
i.e.~a set with $\mu$-null boundary. In Definition~\ref{def:ae-decidable},
the topological boundary of $A$ is a subset of the null set $\mathcal{X}\smallsetminus(U\cup V)$.
(Since, $U\cup V$ is not required to be dense, $\mathcal{X}\smallsetminus(U\cup V)$
may contain null open sets, but these will not present a problem.)

Not every $\Sigma_{1}^{0}$ set is a.e.~decidable; for example, take
a dense $\Sigma_{1}^{0}$ set with measure less than one. However,
any basic open ball $B(a,r)$ is a.e.~decidable provided that $\{x\mid d(a,x)=r\}$
has null measure. (Again, if we require the boundary to be nowhere
dense, the situation is more subtle. See the discussion in Hoyrup
and Rojas \cite{Hoyrup2009}.) Further, the closed ball $\overline{B}(a,r)$
is also a.e.~decidable with the same code. Any two a.e.~decidable
sets with the same code will be considered the same for our purposes.
Hence, I will occasionally say $x\in A$ (respectively $x\notin A)$,
when I mean $x\in U$ (respectively $x\notin V)$ for the corresponding
a.e.~decidable pair $(U,V)$.
\begin{rem}
\label{rem:a.e.-decidable-def-2}If $(U,V)$ is a pair of sets satisfying
conditions (1) and (3) in Definition~\ref{def:ae-decidable} and
satisfying $\mu(U\cap V)=0$ (in place of condition (2)), then $U$
is an a.e.~decidable set as follows. By the definition of a $\Sigma_{1}^{0}$
set, $U$ and $V$ are both unions of c.e.~listings of basic open
balls. Let $U'$ and $V'$ be the same unions, except without the
balls of measure $0$. Then $U'$ and $V'$ are still $\Sigma_{1}^{0}$,
$U'\subseteq U\subseteq\mathcal{X}\smallsetminus V'$, $U'=U$ a.e.,
$V'=V$ a.e., and $U'\cap V'=\varnothing$.
\end{rem}

\begin{defn}[Inspired by Hoyrup and Rojas \cite{Hoyrup2009}]
\label{def:a.e.-decidable-repr}Let $(\mathcal{X},\mu)$ be a computable
probability space, and let $\mathcal{A}=\{A_{i}\}_{i\in\mathbb{N}}$
be a computably indexed family of a.e.~decidable sets. Let the computably
indexed family $\mathcal{B}=\{B_{i}\}_{i\in\mathbb{N}}$ be the closure
of $\mathcal{A}$ under finite Boolean combinations. We say $\mathcal{A}$
is an \noun{a.e.~decidable generator} (or \noun{generator} for short)
of $(\mathcal{X},\mu)$ if given a $\Sigma_{1}^{0}$ set $U\subseteq\mathcal{X}$
one can find (effectively from the code of $U$) a c.e.~set $I$
of indices (possibly finite or empty) such that $U=\bigcup_{i\in I}B_{i}$
$\mu$-a.e.
\end{defn}
Such a family $\mathcal{A}$ is called a ``generator'' of $(\mathcal{X},\mu)$
since it generates the measure algebra of $\mu$.\footnote{Recall, that the measure algebra of $(\mathcal{X},\mu)$ is the set
of equivalence classes of Borel measurable sets under $\mu$-a.e.\ equivalence.
(This is an algebra since it contains (the equivalence class of) $\varnothing$
and is closed under the operations of countable union and complement.)
Closing $\mathcal{A}$ under the countable unions generates all (equivalence
classes of) open sets. Therefore, closing $\mathcal{A}$ under countable
unions and complements generates all (equivalence classes of) measurable
sets.} Hoyrup and Rojas \cite{Hoyrup2009} show that not only does such
a generator $\mathcal{A}$ exist for each $(\mathcal{X},\mu)$, but
it can be taken to be a basis of the topology, hence they call $\mathcal{A}$
a ``basis of almost decidable sets''. I will not require that $\mathcal{A}$
is a basis.
\begin{thm}[Hoyrup and Rojas \cite{Hoyrup2009}]
\label{thm:a.e.- decidable-repr}Let $(\mathcal{X},\mu)$ be a computable
probability space. There exists an a.e.~decidable generator $\mathcal{A}$
of $(\mathcal{X},\mu)$, which is computable uniformly from (the code
for) $(\mathcal{X},\mu)$.
\end{thm}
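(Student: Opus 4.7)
The plan is to build $\mathcal{A}$ from open balls $B(a_i, r)$ centered at simple points $a_i \in S$ whose spheres $\{x : d(a_i, x) = r\}$ have $\mu$-measure zero. For such a ``good'' ball, the pair $U = B(a_i, r)$ and $V = \{x : d(a_i, x) > r\}$ consists of disjoint $\Sigma_1^0$ sets with $\mu(U \cup V) = 1$, so together they code an a.e.\ decidable set. I would enumerate such good balls uniformly, indexed by the simple point together with extra data localizing the radius, and then verify the generator property via a triangle-inequality density argument.

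The main obstacle is producing good radii effectively: the exact condition that $\{x : d(a_i, x) = r\}$ has measure zero is a priori $\Pi_2^0$ in $(i, r)$ and cannot be semi-decided directly. I would sidestep this by producing each radius as a computable real rather than a rational, via a nested-interval search. For each simple point $a_i$ and each rational open interval $(p, q) \subset (0, \infty)$, I would recursively construct a descending sequence of rational intervals $(p_0, q_0) \supsetneq (p_1, q_1) \supsetneq \cdots$ inside $(p, q)$ such that $\mu(\{x : p_n \leq d(a_i, x) \leq q_n\}) < 2^{-n}$. Each such $(p_n, q_n)$ can be located by effective search: the displayed measure is upper semicomputable since $\{x : p_n \leq d(a_i, x) \leq q_n\}$ is $\Pi_1^0$, so the condition ``$< 2^{-n}$'' is $\Sigma_1^0$; existence of suitable sub-intervals follows from the fact that the atoms of the pushforward measure $(d(\cdot, a_i))_*\mu$ on $[0, \infty)$ form a countable set, so in any rational interval one may localize around a non-atomic point to make the closed-annulus measure arbitrarily small. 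The intersection $\{r_{i,p,q}\} = \bigcap_n [p_n, q_n]$ then defines a computable real, and since the sphere at $r_{i,p,q}$ is contained in $\{x : p_n \leq d(a_i, x) \leq q_n\}$ for every $n$, it has measure zero.

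Collecting these balls over all triples $(i, p, q)$ yields a uniformly computable family $\mathcal{A} = (A_n)_n$ of a.e.\ decidable sets. To verify the generator property, take any $\Sigma_1^0$ set $U$ presented as a computable union $\bigcup_k B(b_k, t_k)$ of basic open balls. I would enumerate those indices $n$ (with underlying triple $(i_n, p_n, q_n)$ and radius $r_n = r_{i_n, p_n, q_n}$) for which the strict triangle-inequality certificate $d(b_k, a_{i_n}) + r_n < t_k$ holds for some $k$; this is $\Sigma_1^0$ because $d$ and $r_n$ are computable. By the triangle inequality each such $A_n$ is contained in $B(b_k, t_k) \subseteq U$ modulo its null sphere, so the enumerated union lies in $U$ a.e. Conversely, for any $x \in U$ lying in some $B(b_k, t_k)$, set $\delta = t_k - d(b_k, x) > 0$, pick a simple point $a_i$ with $d(a_i, x) < \delta/3$ and rationals $\delta/3 < p < q < 2\delta/3$; then $r_{i,p,q} \in (p, q)$ gives $x \in A_{i,p,q}$, and $d(b_k, a_i) + r_{i,p,q} < (d(b_k, x) + \delta/3) + 2\delta/3 = t_k$, so $A_{i,p,q}$ appears in the enumeration. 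Hence $U$ equals the enumerated union a.e., giving the required effective representation in the Boolean algebra generated by $\mathcal{A}$.
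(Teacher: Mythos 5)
Your proposal is correct and follows essentially the same route the paper sketches for this theorem (which it attributes to Hoyrup and Rojas): start from balls centered at simple points and use an effective nested-interval search to compute radii whose spheres are $\mu$-null, exploiting the upper semicomputability of the closed-annulus measures. The only detail to add is that the search should also force $q_n - p_n \rightarrow 0$ so that $\bigcap_n [p_n,q_n]$ really is a single computable real; with that, your triangle-inequality verification of the generator property goes through as written.
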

The main idea of the proof for Theorem~\ref{thm:a.e.- decidable-repr}
is to start with the collection of basic open balls centered at simple
points with rational radii. While, these may not have null boundary,
a basic diagonalization argument (similar to the proof of the Baire
category theory, see \cite{Brattka2001}) can be used to calculate
a set of radii approaching zero for each simple point such that the
resulting balls are all a.e.~decidable. Similar arguments have been
given by Bosserhoff \cite{Bosserhoff2008} and Gács \cite{Gacs2005}.
The technique is related to Bishop's theory of profiles \cite[Section 6.4]{Bishop1985}
and to ``derandomization'' arguments (see Freer and Roy \cite{Freer2011}
for example).

From a generator we can decompose $\mathcal{X}$ into a.e.~decidable
cells. This is the indexed family $\{A_{\sigma}\}_{\sigma\in2^{<\omega}}$
mentioned in the introduction.
\begin{defn}
\label{def:A-names}Let $\mathcal{A}=\{A_{i}\}$ be an a.e.~decidable
generator of $(\mathcal{X},\mu)$. Recall each $A_{i}$ is coded by
an a.e.~decidable pair $(U_{i},V_{i})$ where $U_{i}\subseteq A_{i}\subseteq\mathcal{X}\smallsetminus V_{i}$.
For $\sigma\in2^{\omega}$ of length $s$ define $[\sigma]_{\mathcal{A}}=A_{0}^{\sigma(0)}\cap A_{1}^{\sigma(1)}\cap\cdots\cap A_{s-1}^{\sigma(s-1)}$
where for each $i$, $A_{i}^{0}=U_{i}$ and $A_{i}^{1}=V_{i}$, and
$[\varepsilon]_{\mathcal{A}}=\mathcal{X}$. When possible, define
$x\upharpoonright_{\mathcal{A}}n$ as the unique $\sigma$ of length
$n$ such that $x\in[\sigma]_{\mathcal{A}}$. Also when possible,
define the \noun{$\mathcal{A}$-name} of $x$ as the binary sequence
$\name_{\mathcal{A}}(x)=\lim_{n\rightarrow\infty}x\upharpoonright_{\mathcal{A}}n$.
A point without an $\mathcal{A}$-name will be called an \noun{unrepresented point}.
Each $[\sigma]_{\mathcal{A}}$ will be called a \noun{cell}, and
the collection of $\{[\sigma]_{\mathcal{A}}\}_{\sigma\in2^{<\omega}}$
will be called an \noun{(a.e.\ decidable) cell decomposition} of
$(\mathcal{X},\mu)$.
\end{defn}
\begin{flushleft}
The choice of notation allows one to quickly translate between Cantor
space and the space $(\mathcal{X},\mu)$. Gács \cite{Gacs2005} and
others refer to the cell $[x\upharpoonright_{\mathcal{A}}n]_{\mathcal{A}}$
as the \noun{$n$-cell} of $x$ and write it as $\Gamma_{n}(x)$.
\par\end{flushleft}
\begin{rem}
There are two types of ``bad points'', unrepresented points and
points $x\in[\sigma]_{\mathcal{A}}$ where $\mu([\sigma]_{\mathcal{A}})=0$.
The set of ``bad points'' is a null $\Sigma_{2}^{0}$ set, so each
``bad point'' is not even Kurtz random! One may also go further,
and for each generator $\mathcal{A}$ compute another $\mathcal{A}'$
such that $[\sigma]_{\mathcal{A}}=[\sigma]_{\mathcal{A}'}$ a.e.,\ but
$\mu([\sigma]_{\mathcal{A}})=0$ if and only if $[\sigma]_{\mathcal{A}'}=\varnothing$.
Then all the ``bad points'' would be unrepresented points.\end{rem}
\begin{example}
Consider a computable measure $\mu$ on $2^{\omega}$. Let $A_{i}=\{x\in2^{\omega}\mid x(i)=1\}$
where $x(i)$ is the $i$th bit of $x$. Then $\mathcal{A}=\{A_{i}\}$
is a generator of $(2^{\omega},\mu)$. Further $[\sigma]_{\mathcal{A}}=[\sigma]^{\prec}$,
$x\upharpoonright_{\mathcal{A}}n=x\upharpoonright n$, and $\name_{\mathcal{A}}(x)=x$.
Call $\mathcal{A}$ the \noun{natural generator} of $(2^{\omega},\mu)$,
and $\{[\sigma]^{\prec}\}_{\sigma\in2^{<\omega}}$ the \noun{natural cell decomposition}.
\end{example}
In this next proposition, recall that a set $S\subseteq2^{<\omega}$
is \noun{prefix-free} if there is no pair $\tau,\sigma\in S$ such
that $\tau\prec\sigma$. 
\begin{prop}
\label{prop:decompose-open-set}Let $(\mathcal{X},\mu)$ be a computable
probability space with generator $\mathcal{A}$ and $\{[\sigma]_{\mathcal{A}}\}_{\sigma\in2^{<\omega}}$
the corresponding cell decomposition. Then for each $\Sigma_{1}^{0}$
set $U\subseteq\mathcal{X}$ there is a c.e.~set $\{\sigma_{i}\}$
(c.e.\ in the code for $U$) such that $U=\bigcup_{i}[\sigma_{i}]_{\mathcal{A}}$
a.e. Further, $\{\sigma_{i}\}$ can be assumed to be prefix-free and
such that $\mu([\sigma_{i}]_{\mathcal{A}})>0$ for all $i$.\end{prop}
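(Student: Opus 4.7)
The plan is to start from the defining property of the generator $\mathcal{A}$ in Definition~\ref{def:a.e.-decidable-repr}, unfold the resulting decomposition of $U$ into a union of cells, and then effectively prune the indexing set to be prefix-free and to avoid null cells.

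First I would apply Definition~\ref{def:a.e.-decidable-repr} to obtain (uniformly from the code of $U$) a c.e.\ family $\{B_j\}$ of finite Boolean combinations of the $A_i$ with $U = \bigcup_j B_j$ a.e. Each $B_j$ involves only finitely many generators, say $A_0,\ldots,A_{s_j-1}$. Because each a.e.\ decidable pair $(U_i,V_i)$ satisfies $\mu(U_i \cup V_i) = 1$, the $2^{s_j}$ cells $\{[\sigma]_{\mathcal{A}} : |\sigma| = s_j\}$ partition $\mathcal{X}$ up to $\mu$-measure zero, and on any such cell every generator $A_i$ with $i < s_j$ has a definite status (in or out, determined by $\sigma(i)$). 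Truth-table evaluation of the Boolean expression defining $B_j$ therefore picks out a finite set $S_j \subseteq 2^{s_j}$, computable from $B_j$, with $B_j = \bigcup_{\sigma \in S_j} [\sigma]_{\mathcal{A}}$ a.e. Enumerating over $j$ produces the desired c.e.\ sequence $(\sigma_i)$ with $U = \bigcup_i [\sigma_i]_{\mathcal{A}}$ a.e.

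For prefix-freeness, I would use the a.e.\ identity $[\tau]_{\mathcal{A}} = \bigcup_{\rho \succeq \tau,\, |\rho|=L} [\rho]_{\mathcal{A}}$, valid for any $L \geq |\tau|$, to refine the enumeration stage by stage. Maintaining a finite prefix-free set $P_n$ with the invariant $\bigcup_{\tau \in P_n} [\tau]_{\mathcal{A}} = \bigcup_{i \leq n} [\sigma_i]_{\mathcal{A}}$ a.e., at stage $n$ I let $L_n = \max(|\sigma_n|, \max_{\tau \in P_{n-1}}|\tau|)$ and define $P_n$ by adding to $P_{n-1}$ exactly those length-$L_n$ extensions of $\sigma_n$ that do not extend any element of $P_{n-1}$. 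New additions are pairwise incomparable (same length) and are incomparable with the old ones (they explicitly do not extend any old $\tau$, and have length $\geq |\tau|$), so prefix-freeness is preserved. Setting $P = \bigcup_n P_n$ yields a c.e.\ prefix-free set still covering $U$ a.e.

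Finally, to arrange $\mu([\tau]_{\mathcal{A}}) > 0$, I observe that the cell $[\tau]_{\mathcal{A}}$ is itself the $\Sigma_1^0$ set $\bigcap_{i<|\tau|} A_i^{\tau(i)}$, while its a.e.\ complement is the $\Sigma_1^0$ set $\bigcup_{i<|\tau|} A_i^{1-\tau(i)}$, and the measures of these two $\Sigma_1^0$ sets sum to $1$. Hence $\mu([\tau]_{\mathcal{A}})$ is both lower and upper semicomputable, and thus computable uniformly in $\tau$. The strategy is then to enumerate $\tau \in P$ into the final prefix-free set only upon witnessing a positive rational lower bound on $\mu([\tau]_{\mathcal{A}})$, discarding precisely the null cells while preserving the a.e.\ equality. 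The main conceptual hurdle is Step~1 — recognising that a finite Boolean combination of the $A_i$ decomposes a.e.\ into cells at the appropriate depth; the remaining two steps are essentially routine effectivizations.
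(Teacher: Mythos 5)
Your proof is correct. The paper itself offers no argument here---its proof reads only ``Straight-forward from Definition~\ref{def:ae-decidable}''---and your three steps (truth-table expansion of each finite Boolean combination $B_j$ into cells at a fixed depth, the stage-by-stage prefix-free refinement using $[\tau]_{\mathcal{A}}=\bigcup_{\rho\succeq\tau,\,|\rho|=L}[\rho]_{\mathcal{A}}$ a.e., and the discarding of null cells by waiting for a positive lower bound on the lower-semicomputable quantity $\mu([\tau]_{\mathcal{A}})$) are exactly the intended unfolding of Definitions~\ref{def:ae-decidable}, \ref{def:a.e.-decidable-repr}, and~\ref{def:A-names}.
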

\begin{proof}
Straight-forward from Definitions~\ref{def:ae-decidable} and \ref{def:a.e.-decidable-repr}.
\end{proof}
It is clear that a generator $\mathcal{A}$ is determined by its cell
decomposition $\{[\sigma]_{\mathcal{A}}\}_{\sigma\in2^{<\omega}}$,
namely 
\[
A_{i}=\bigcup_{\sigma\in2^{i}}[\sigma1]_{\mathcal{A}}\quad\text{a.e.}
\]
Hence we will often confuse a generator and its cell decomposition
writing both as $\mathcal{A}$. (I will still use the notation $A_{i}$
for the sets in the generator, and $[\sigma]_{\mathcal{A}}$ for the
sets in the cell decomposition.) Say that $\mathcal{A}=\mathcal{B}$
a.e.~if $A_{i}=B_{i}$ a.e.\ for all $i$ (or equivalently, $[\sigma]_{\mathcal{A}}=[\sigma]_{\mathcal{B}}$
a.e.\ for all $\sigma$). Further, this next proposition gives the
criterion for when an indexed family $\{B_{\sigma}\}_{\sigma\in2^{<\omega}}$
is a.e.~equal to an a.e.\ decidable cell decomposition.
\begin{prop}
\label{prop:net-to-representation}Let $(\mathcal{X},\mu)$ be a computable
probability space. Let $\{B_{\sigma}\}_{\sigma\in2^{<\omega}}$ be
a computably indexed family of $\Sigma_{1}^{0}$ sets. Then $\{B_{\sigma}\}$
is a.e.~equal to an a.e.~decidable cell decomposition corresponding
to some a.e.~decidable generator $\mathcal{A}$ (that is, $B_{\sigma}=[\sigma]_{\mathcal{A}}$
a.e.\ for all $\sigma\in2^{<\omega}$) if and only if $\{B_{\sigma}\}$
satisfies the following conditions,
\begin{enumerate}
\item $B_{\varepsilon}=\mathcal{X}$ a.e.,
\item for all $\sigma\in2^{\omega}$, $B_{\sigma0}\cap B_{\sigma1}=\varnothing$
a.e.\ and $B_{\sigma0}\cup B_{\sigma1}=B_{\sigma}$ a.e., and
\item for each $\Sigma_{1}^{0}$ set $U\subseteq\mathcal{X}$ there is a
c.e.~set $\{\sigma_{i}\}$ (c.e.\ in the code for $U$) such that
$U=\bigcup_{i}B_{\sigma_{i}}$ a.e.
\end{enumerate}
\end{prop}
\begin{proof}
It is straight-forward from Definitions~\ref{def:ae-decidable} and
\ref{def:A-names} as well as Proposition~\ref{prop:decompose-open-set}
that every a.e.~decidable cell decomposition $\{[\sigma]_{\mathcal{A}}\}_{\sigma\in2^{<\omega}}$
satisfies (1)--(3).

For the other direction, fix $\{B_{\sigma}\}_{\sigma\in2^{<\omega}}$
satisfying (1)--(3). Let $\mathcal{A}=\{A_{i}\}_{i\in\mathbb{N}}$
be given by 
\[
A_{i}=\bigcup_{\sigma\in2^{i}}B_{\sigma1}.
\]
Define $[\sigma]_{\mathcal{A}}$ as in Definition~\ref{def:A-names}.
Then $B_{\sigma}=[\sigma]_{\mathcal{A}}$ a.e.~by induction on the
length of $\sigma$ as follows (using (1) and (2)), 
\begin{align*}
[\varepsilon]_{\mathcal{A}} & =\mathcal{X}=B_{\varepsilon}\quad\text{a.e.}\\
{}[\sigma1]_{\mathcal{A}} & =[\sigma]_{\mathcal{A}}\cap A_{|\sigma|}=B_{\sigma}\cap\bigcup_{\tau=2^{|\sigma|}}B_{\tau1}=B_{\sigma1}\quad\text{a.e.}\\
{}[\sigma0]_{\mathcal{A}} & =[\sigma]_{\mathcal{A}}\cap A_{|\sigma|}^{c}=B_{\sigma}\cap\bigcup_{\tau=2^{|\sigma|}}B_{\tau0}=B_{\sigma0}\quad\text{a.e.}
\end{align*}
Now it remains to show that $\mathcal{A}$ is an a.e.~decidable generator.
By Remark~\ref{rem:a.e.-decidable-def-2} $A_{i}$ is a.e.~decidable
in $i$ by setting $U_{i}=A_{i}=\bigcup_{\tau\in2^{i}}B_{\tau1}$
and setting $V_{i}=\bigcup_{\tau\in2^{i}}B_{\tau0}$. Finally, $\mathcal{A}$
is a generator by (3).
\end{proof}
Each computable probability space $(\mathcal{X},\mu)$ can be represented
by a cell decomposition $\mathcal{A}$ along with the values $\mu([\sigma]_{\mathcal{A}})$.
Gács \cite{Gacs2005} and Hoyrup and Rojas \cite{Hoyrup2009} pick
a canonical cell decomposition for each $(\mathcal{X},\mu)$ based
on a generator $\mathcal{A}$ which is also a topological basis of
the space. In this paper, I will not fix a canonical cell decomposition,
instead showing that computable randomness can be defined independently
of the choice of cell decomposition (Theorem~\ref{thm:indepentant-of-representation})
and that there is a one-to-one correspondence between cell decompositions
of $(\mathcal{X},\mu)$ and isomorphisms between $(\mathcal{X},\mu)$
and probability measures on $2^{\omega}$ (Proposition~\ref{prop:isomorphism-to-representation}).

\section{Computable randomness with respect to computable probability spaces\label{sec:CR-on-prob-spaces}}

In this section I define computable randomness with respect to a computable
probability space. As a first step, I have already done this for spaces
$(2^{\omega},\mu)$. The second step will be to define computable
randomness with respect to a particular cell decomposition of the
space. Finally, the last step is Theorem~\ref{thm:indepentant-of-representation},
where I will show the definition is invariant under the choice of
cell decomposition.

There are two characterizations of computable randomness with respect
to $(2^{\omega},\lambda)$ using Martin-Löf tests that were obtained
independently. While they differ in formulation, they are essentially
the same. The first is due to Downey, Griffiths, and LaForte \cite{Downey2004a}.
However, I will use the other due to Merkle, Mihailovi\'{c}, and Slaman
\cite{Merkle2006}. 
\begin{defn}[Merkle et al.\ \cite{Merkle2006}]
\label{def:bounded_ML_test_cantor}On $(2^{\omega},\lambda)$ a Martin-Löf
test $(U_{n})$ is called a \noun{bounded Martin-Löf test} if there
is a computable measure $\nu\colon2^{<\omega}\rightarrow[0,\infty)$
such that for all $n\in\mathbb{N}$ and $\sigma\in2^{<\omega}$ 
\[
\lambda(U_{n}\cap[\sigma]^{\prec})\leq2^{-n}\nu(\sigma).
\]

\noindent We say that the test $(U_{n})$ is \noun{bounded by} the
measure $\nu$.
\end{defn}

\begin{thm}[Downey et al.\ \cite{Downey2004a} and Merkle et al.\ \cite{Merkle2006}]
\label{thm:bounded_ML_test_cantor}On $(2^{\omega},\lambda)$, a
sequence $x\in2^{\omega}$ is computably random if and only if $x$
is not covered by any bounded Martin-Löf test.
\end{thm}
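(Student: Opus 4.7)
The plan is to prove both directions via the standard bijection $M(\sigma)\leftrightarrow\nu(\sigma)=M(\sigma)\lambda(\sigma)$ between computable martingales on $(2^\omega,\lambda)$ and computable measures on $2^\omega$, which here is defined without caveats since $\lambda(\sigma)>0$ for all $\sigma$. The quantitative bounds will come from Kolmogorov's maximal inequality $\lambda(\{y:\sup_k M(y\upharpoonright k)\geq c\})\leq M(\varepsilon)/c$, both in raw form and conditioned on subtrees below a fixed $\sigma$.

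The reverse direction is cleaner. Given a bounded Martin-L\"of test $(U_n)$ bounded by $\nu$ and covering $x$, let $M_\nu(\sigma):=\nu(\sigma)/\lambda(\sigma)$. For each $n$, openness of $U_n$ gives some $\tau\prec x$ with $[\tau]^{\prec}\subseteq U_n$; every further prefix $\tau'$ of $x$ with $\tau'\succeq\tau$ also satisfies $[\tau']^{\prec}\subseteq U_n$, so the defining bound yields $\lambda(\tau')\leq 2^{-n}\nu(\tau')$, i.e.\ $M_\nu(\tau')\geq 2^n$. Hence $\liminf_k M_\nu(x\upharpoonright k)\geq 2^n$ for every $n$, so $M_\nu$ succeeds on $x$ and $x$ is not computably random.

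For the forward direction, suppose $M$ is a computable martingale with $M(\varepsilon)=1$ succeeding on $x$. For each $n\geq 1$ let $M_n$ be $M$ stopped at its first prefix $\sigma^*$ with $M(\sigma^*)\geq 2^n$ (giving $2^n\leq M(\sigma^*)<2^{n+1}$), let $\nu_n:=M_n\cdot\lambda$, and let $U_n:=\{y:\sup_k M(y\upharpoonright k)\geq 2^n\}$. Then $(U_n)$ is a computable sequence of $\Sigma^0_1$ sets covering $x$, and the key inequality $\lambda(U_n\cap[\sigma]^{\prec})\leq 2^{-n}\nu_n(\sigma)$ splits into two cases: when $\sigma$ extends a first-hitting cylinder then $[\sigma]^{\prec}\subseteq U_n$ and $M_n(\sigma)\geq 2^n$, so $\lambda(U_n\cap[\sigma]^{\prec})=\lambda(\sigma)\leq 2^{-n}M_n(\sigma)\lambda(\sigma)=2^{-n}\nu_n(\sigma)$; otherwise conditional Kolmogorov applied on the subtree below $\sigma$ gives $\lambda(U_n\cap[\sigma]^{\prec})/\lambda(\sigma)\leq M(\sigma)/2^n = M_n(\sigma)/2^n$. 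The stopping trick is precisely what rescues the first case, where the naive choice $\nu_n=M\cdot\lambda$ would fail because $M(\sigma)$ can fluctuate back below $2^n$ after the hitting time.

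The main obstacle is merging the level-dependent $\nu_n$ into a single computable $\nu$. I will take $\nu:=\sum_{n\geq 1}2^{-n/2}\nu_n$. This is finite and computable because for each $\sigma$, $M_n(\sigma)=M(\sigma)$ once $2^n>M^*(\sigma):=\max_{\sigma'\preceq\sigma}M(\sigma')$ (which is itself computable from $\sigma$), so the defining series splits into a finite head plus a geometric tail $M(\sigma)\lambda(\sigma)\sum_{n>N}2^{-n/2}$. From $\nu_n\leq 2^{n/2}\nu$ one then obtains $\lambda(U_n\cap[\sigma]^{\prec})\leq 2^{-n/2}\nu(\sigma)$; reindexing $V_m:=U_{2m}$ finally produces a bounded Martin-L\"of test bounded by $\nu$ that still covers $x$.
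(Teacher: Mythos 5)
Your argument is correct in substance, but it reaches the bounded test by a genuinely different route than the paper. The paper never proves Theorem~\ref{thm:bounded_ML_test_cantor} in isolation; it is subsumed by the proof of Theorem~\ref{thm:comp_rand_defs}, whose chain from a successful martingale to a bounded Martin-L\"of test passes through the \emph{savings property}: the martingale $M$ is first converted to a martingale $N$ with a monotone savings function $f\leq N\leq f+1$, so that $g=\sup_{s}f(x\upharpoonright s)$ is a lower semicomputable integral test satisfying $\int_{[\sigma]}g\,d\mu\leq N(\sigma)\mu(\sigma)$, and the single measure $\nu=N\cdot\mu$ then bounds all levels $U_{n}=\{g>2^{n}\}$ at once via Markov's inequality. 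You instead stop $M$ at the first hitting time of $2^{n}$ separately for each $n$, apply Kolmogorov's inequality on subtrees, and then merge the level-dependent measures $\nu_{n}$ by the weighted sum $\sum_{n}2^{-n/2}\nu_{n}$ with a final reindexing. The trade-off is real: the savings trick buys a single bounding measure for free (monotonicity of $f$ along branches is exactly what prevents the capital from ``falling back below $2^{n}$,'' the failure mode you correctly identify for the naive $\nu=M\lambda$), whereas your stopping-time construction is closer to the classical Doob--Ville argument and makes the per-level inequality transparent, at the cost of the merging step. Your reverse direction is also more direct than the paper's detour through Vitali covers and integral tests, and it is fine. Two small repairs you should make explicit: (i) define $U_{n}$ with strict inequality $\sup_{k}M(y\upharpoonright k)>2^{n}$ and reduce to a rational-valued computable martingale (a standard fact preserving the success set), since otherwise $U_{n}$ need not be $\Sigma_{1}^{0}$ and the first-hitting cylinders need not form a c.e.\ set; (ii) note that $\nu(\varepsilon)=\sum_{n\geq1}2^{-n/2}=1+\sqrt{2}>1$, so the reindexed sequence $V_{m}=U_{2m}$ satisfies $\lambda(V_{m})\leq(1+\sqrt{2})2^{-m}$ rather than $2^{-m}$; a further shift of the index (or rescaling $\nu$) is needed for $(V_{m})$ to literally be a Martin-L\"of test as in Definition~\ref{def:bounded_ML_test_cantor}.
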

The next theorem and definition give five equivalent types of tests
for computable randomness with respect to a cell decomposition $\mathcal{A}$.
(I also give a machine characterization of computable randomness in
Section~\ref{sec:Kolmogorov-complexity-and-randomness}.) The integral
test and Solovay test are new for computable randomness, although
they are implicit in the proof of Theorem~\ref{thm:bounded_ML_test_cantor}.
\begin{thm}
\label{thm:comp_rand_defs}Let $\mathcal{A}$ be a cell decomposition
of the computable probability space $(\mathcal{X},\mu)$. If $x\in\mathcal{X}$
is neither an unrepresented point nor in a null cell, then the following
are equivalent.
\begin{enumerate}
\item \label{enu:mart}(Martingale test) There is an a.e.\ computable martingale
$M\colon{\subseteq{}}2^{<\omega}\rightarrow[0,\infty)$ satisfying
for all $\sigma\in2^{<\omega}$ 
\begin{gather*}
\sigma\notin\dom M\quad\rightarrow\quad\mu([\sigma]_{\mathcal{A}})=0\\
M(\sigma0)\mu([\sigma0]_{\mathcal{A}})+M(\sigma1)\mu([\sigma1]_{\mathcal{A}})=M(\sigma)\mu([\sigma]_{\mathcal{A}})\quad(\textit{undefined}\cdot0=0)
\end{gather*}
such that $\limsup_{n\rightarrow\infty}M(x\upharpoonright_{\mathcal{A}}n)=\infty$.
(Say $M$ wins on $x$.)
\item \label{enu:mart_savings}(Martingale test with savings property, see
for example \cite[Proposition 2.3.8]{Downey2010}) There is an a.e.\ computable
martingale $N\colon{\subseteq{}}2^{<\omega}\rightarrow[0,\infty)$
satisfying for all $\sigma\in2^{<\omega}$ 
\begin{gather*}
\sigma\notin\dom N\quad\rightarrow\quad\mu([\sigma]_{\mathcal{A}})=0\\
N(\sigma0)\mu([\sigma0]_{\mathcal{A}})+N(\sigma1)\mu([\sigma1]_{\mathcal{A}})=N(\sigma)\mu([\sigma]_{\mathcal{A}})\quad(\textit{undefined}\cdot0=0)
\end{gather*}
and a partial-computable ``savings function'' $f\colon{\subseteq{}}2^{<\omega}\rightarrow[0,\infty)$
satisfying for all $\sigma,\tau\in2^{<\omega}$ 
\begin{gather*}
\sigma\notin\dom f\quad\rightarrow\quad\mu([\sigma]_{\mathcal{A}})=0\\
f(\sigma)\leq N(\sigma)\leq f(\sigma)+1\quad(\text{for }\sigma\in\dom f\cap\dom N)\\
\sigma\preceq\tau\quad\rightarrow f(\sigma)\leq f(\tau)\quad(\text{for }\sigma,\tau\in\dom f)
\end{gather*}
such that $\lim_{n\rightarrow\infty}N(x\upharpoonright_{\mathcal{A}}n)=\infty$.
(Say $(N,f)$ wins on $x$.)
\item \label{enu:integral}(Integral test) There is a computable measure
$\nu\colon2^{<\omega}\rightarrow[0,\infty)$ and a lower semicomputable
function $g\colon\mathcal{X}\rightarrow[0,\infty]$ satisfying for
all $\sigma\in2^{<\omega}$ 
\[
\int_{[\sigma]_{\mathcal{A}}}g\,d\mu\leq\nu(\sigma)
\]
such that $g(x)=\infty$. (Say $(g,\nu)$ wins on $x$.)
\item \label{enu:bounded}(Bounded Martin-Löf test) There is a computable
measure $\nu\colon2^{<\omega}\rightarrow[0,\infty)$ and a Martin-Löf
test $(U_{n})$ satisfying for all $n\in\mathbb{N}$ and $\sigma\in2^{<\omega}$
\[
\mu(U_{n}\cap[\sigma]_{\mathcal{A}})\leq2^{-n}\nu(\sigma).
\]
such that $(U_{n})$ covers $x$. (Say $((U_{n}),\nu)$ wins on $x$.)
\item \label{enu:vitali_cover}(Solovay test) There is a computable measure
$\nu\colon2^{<\omega}\rightarrow[0,\infty)$ and a Solovay test $(V_{n})$
satisfying for all $n\in\mathbb{N}$ and $\sigma\in2^{<\omega}$ 
\[
\sum_{n}\mu(V_{n}\cap[\sigma]_{\mathcal{A}})\leq\nu(\sigma)
\]
such that $(V_{n})$ Solovay covers $x$. (Say $((V_{n}),\nu)$ wins
on $x$.)
\end{enumerate}

For \emph{(\ref{enu:integral})} through \emph{(\ref{enu:vitali_cover})},
the measure $\nu$ may be assumed to be a probability measure and
satisfy the following absolute-continuity condition,
\begin{equation}
\nu(\sigma)\leq\int_{[\sigma]_{\mathcal{A}}}h\,d\mu\label{eq:abs-cont}
\end{equation}
for some integrable function $h$.

Further, the equivalences are computable. In other words, given two
types of tests, a test $T$ of the first type is computable from a
test $S$ of the second type such that $T$ wins on all points $x\in\mathcal{X}$
that $S$ wins on---assuming $x$ is neither an unrepresented point
nor in a null cell.

\end{thm}
\begin{defn}
\label{def:comp_rand}Let $\mathcal{A}$ be a cell decomposition of
the computable probability space $(\mathcal{X},\mu)$. Say $x\in X$
is \noun{computably random} (with respect to $\mathcal{A}$) if $x$
is neither an unrepresented point nor in a null cell, and $x$ does
not satisfy any of the equivalent conditions (\ref{enu:mart}) through
(\ref{enu:vitali_cover}) of Theorem~\ref{thm:comp_rand_defs}.
\end{defn}
Before proving the theorem, here is a technical lemma. It will be
needed to show (\ref{enu:mart}) implies (\ref{enu:mart_savings})
in the proof of Theorem~\emph{\ref{thm:comp_rand_defs}}.
\begin{lem}[Technical lemma]
Let $(a_{n})$ be a sequence of positive real numbers. Define $(b_{n})$
and $(c_{n})$ recursively as follows: $b_{0}=a_{0}$, $c_{0}=b_{0}-1$,
\[
b_{n+1}=c_{n}+\frac{a_{n+1}}{a_{n}}(b_{n}-c_{n})
\]
and $c_{n+1}=\max(c_{n},b_{n+1}-1)$. If $\limsup_{n}a_{n}=\infty$,
then $\lim_{n}b_{n}=\infty$.\end{lem}
\begin{proof}
Let $(n_{i})$ be the indices such that $c_{n_{i}}=b_{n_{i}}-1$ listed
in order. By induction on $n\in[n_{i},n_{i+1})$ we have $c_{n}=b_{n_{i}}-1$
and 
\[
b_{n+1}=b_{n_{i}}+\frac{a_{n+1}}{a_{n_{i}}}-1.
\]
Since $\limsup_{n}a_{n}=\infty$, there exists some $m>n_{i}$ such
$b_{m}-1\geq b_{n_{i}}-1=c_{n_{i}}$. The first such $m$ is $n_{i+1}$.
This is also the first $m$ such that $a_{m}\geq a_{n_{i}}$. Therefore
$(n_{i})$ is an infinite series, $a_{n_{i+1}}\geq a_{n_{i}}$, $\lim_{i}a_{n_{i}}=\infty$,
and 
\[
c_{n_{i+1}}=b_{n_{i+1}}-1=\left(b_{n_{i}}+\frac{a_{n_{i+1}}}{a_{n_{i}}}-1\right)-1=c_{n_{i}}+\left(\frac{a_{n_{i+1}}}{a_{n_{i}}}-1\right).
\]
We have that $c_{n_{i}}\geq\ln(a_{n_{i}})$ (natural logarithm) by
the identity $x-1\geq\ln(x)$ and by induction:
\begin{align*}
c_{n_{i+1}} & =c_{n_{i}}+\left(\frac{a_{n_{i+1}}}{a_{n_{i}}}-1\right)\\
 & \geq\ln(a_{n_{i}})+\ln\left(\frac{a_{n_{i+1}}}{a_{n_{i}}}\right)=\ln(a_{n_{i+1}}).
\end{align*}
Hence $\lim_{i}c_{n_{i}}\geq\lim_{i}\ln(a_{n_{i}})=\infty$. Since
$c_{n}$ is nondecreasing and $b_{n}\geq c_{n}$, we have $\lim_{n}b_{n}\geq\lim_{n}c_{n}=\infty$.
\end{proof}

\begin{proof}[Proof of Theorem~\emph{\ref{thm:comp_rand_defs}}]
(\ref{enu:mart}) implies (\ref{enu:mart_savings}): The idea is
to bet with the martingale $M$ as usual, except at each stage set
some of the winnings aside into a savings account $f(\sigma)$ and
bet only with the remaining capital. Formally, define $N$ and $f$
recursively as follows. (One may assume $M(\sigma)\ge1$ for all $\sigma$
by adding $1$ to $M(\sigma)$.) Start with $N(\varepsilon)=M(\varepsilon)$
and $f(\varepsilon)=N(\varepsilon)-1$. At $\sigma$, for $i=0,1$
let 
\[
N(\sigma i)=f(\sigma)+\frac{M(\sigma i)}{M(\sigma)}(N(\sigma)-f(\sigma))
\]
 and $f(\sigma i)=\max(f(\sigma),N(\sigma i)-1)$. (Do not define
$N(\sigma)$ or $f(\sigma)$ unless $\mu(\sigma)>0$, in which case
$M(\tau)$ must be defined for all $\tau\preceq\sigma$.) By the technical
lemma above, $\lim_{n}N(x\upharpoonright_{\mathcal{A}}n)=\infty$.

(\ref{enu:mart_savings}) implies (\ref{enu:integral}): Let $\nu(\sigma)=N(\sigma)\mu([\sigma]_{\mathcal{A}})$
and $g(y)=\sup\{f(y\upharpoonright_{\mathcal{A}}s)\mid\mu(y\upharpoonright_{\mathcal{A}}s)>0\}$
(where $\sup\varnothing=0$). Then $\int_{[\sigma]_{\mathcal{A}}}g\,d\mu\leq\nu(\sigma)\leq\int_{[\sigma]_{\mathcal{A}}}(g+1)\,d\mu$,
which also shows $\nu$ satisfies the absolute-continuity condition
of formula~(\ref{eq:abs-cont}). If $N(\varepsilon)$ is scaled to
be $1$, then $\nu$ is a probability measure.

(\ref{enu:integral}) implies (\ref{enu:mart}): Let $M(\sigma)=\nu(\sigma)/\mu([\sigma]_{\mathcal{A}})$.
Since $g$ is lower semicontinuous and $g(x)=y$, there is a decreasing
sequence $(r_{n})$ such that $g(y)\geq n$ for all $y\in B(x,r_{n})$
and $n$. Let $(k_{n})$ be a decreasing sequence such that $[x\upharpoonright_{\mathcal{A}}k_{n}]_{\mathcal{A}}\subseteq B(x,r_{n})$
for all $n$. Then $M(x\upharpoonright_{\mathcal{A}}k_{n})\geq\frac{\int_{[x\upharpoonright_{\mathcal{A}}k_{n}]_{\mathcal{A}}}g\,d\mu}{\mu([x\upharpoonright_{\mathcal{A}}k_{n}]_{\mathcal{A}})}\geq n$.
Therefore, $\limsup_{k\rightarrow\infty}M(x\upharpoonright_{\mathcal{A}}k)=\infty$.

(\ref{enu:integral}) implies (\ref{enu:bounded}): Let $U_{n}=\{x\mid g(x)>2^{n}\}$.
By Markov's inequality, $\mu(U_{n}\cap[\sigma]_{\mathcal{A}})\cdot2^{n}\leq\int_{[\sigma]_{\mathcal{A}}}g\,d\mu\leq\nu(\sigma)$.

(\ref{enu:bounded}) implies (\ref{enu:vitali_cover}): Let $V_{n}=U_{n}$.

(\ref{enu:vitali_cover}) implies (\ref{enu:integral}): Let $g=\sum_{n}\mathbf{1}_{V_{n}}$.
\end{proof}
In this next proposition, I show the standard randomness implications
(as in formula~(\ref{eq:ran-hierarchy})) still hold. 
\begin{prop}
\label{prop:computable-implies-schnorr}Let $(\mathcal{X},\mu)$ be
a computable probability space. If $x\in\mathcal{X}$ is Martin-Löf
random, then $x$ is computably random (with respect to every cell
decomposition $\mathcal{A}$). If $x\in\mathcal{X}$ is computably
random (with respect to a cell decomposition $\mathcal{A}$), then
$x$ is Schnorr random, and hence Kurtz random.\end{prop}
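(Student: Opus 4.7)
My plan is to prove both implications by contraposition, exploiting the equivalent test characterizations in Theorem~\ref{thm:comp_rand_defs}. The first implication follows almost immediately from the bounded Martin-L\"of test (condition (\ref{enu:bounded})), and the second is a construction of a Vitali cover test (condition (\ref{enu:vitali_cover})) from a given Schnorr test. The ``bad points'' clause in Definition~\ref{def:comp_rand} is handled by the observation (from the remark after Definition~\ref{def:A-names}) that the unrepresented points together with the points lying in a null cell form a null $\Sigma_{2}^{0}$ set, so a bad point is never Kurtz random and hence never Schnorr or Martin-L\"of random either.

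For the first implication, assume $x$ is not computably random with respect to some cell decomposition $\mathcal{A}$. If $x$ is a bad point, the preceding observation already gives that $x$ is not Martin-L\"of random. Otherwise Theorem~\ref{thm:comp_rand_defs}(\ref{enu:bounded}) supplies a bounded Martin-L\"of test $(U_{n})$ covering $x$ and a probability measure $\nu$ on $2^{<\omega}$ with $\mu(U_{n}\cap[\sigma]_{\mathcal{A}})\leq 2^{-n}\nu(\sigma)$. Setting $\sigma=\varepsilon$ yields $\mu(U_{n})\leq 2^{-n}$, so $(U_{n})$ is itself a Martin-L\"of test covering $x$.

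For the second implication, suppose $x$ is not Schnorr random. If $x$ is a bad point, Definition~\ref{def:comp_rand} already gives that $x$ is not computably random with respect to $\mathcal{A}$. Otherwise, fix a Schnorr test $(U_{n})$ covering $x$, put $V_{n}:=U_{n}$, and define
\[
\nu(\sigma):=\sum_{n}\mu(U_{n}\cap[\sigma]_{\mathcal{A}}).
\]
Countable additivity together with $[\sigma 0]_{\mathcal{A}}\cup[\sigma 1]_{\mathcal{A}}=[\sigma]_{\mathcal{A}}$ a.e.\ yields the fairness condition $\nu(\sigma 0)+\nu(\sigma 1)=\nu(\sigma)$. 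The main technical step is showing that $\nu$ is a \emph{computable} measure on $2^{<\omega}$: each summand is lower semicomputable in $n,\sigma$, so $\nu(\sigma)$ is lower semicomputable; moreover the total mass $\nu(\varepsilon)=\sum_{n}\mu(U_{n})$ is a computable real, since each $\mu(U_{n})$ is computable by the Schnorr test hypothesis and the inequality $\mu(U_{n})\leq 2^{-n}$ supplies the effective tail bound, and a lower semicomputable measure on $2^{<\omega}$ with computable total mass is computable via the identity $\nu(\sigma)=\nu(\varepsilon)-\sum_{\tau\neq\sigma,\,|\tau|=|\sigma|}\nu(\tau)$. The family $(V_{n})$ then Vitali-covers $x$, and $\sum_{n}\mu(V_{n}\cap[\sigma]_{\mathcal{A}})=\nu(\sigma)$ meets the Vitali cover test condition, so Theorem~\ref{thm:comp_rand_defs}(\ref{enu:vitali_cover}) gives that $x$ is not computably random with respect to $\mathcal{A}$. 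The remaining implication ``Schnorr $\Rightarrow$ Kurtz'' is standard. The primary obstacle is the computability (not merely lower semicomputability) of $\nu$, which is precisely where the Schnorr-test hypothesis is used and which distinguishes this argument from merely using a Martin-L\"of test.
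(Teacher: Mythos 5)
Your proof is correct and follows essentially the same route as the paper's: the first implication via the bounded Martin-L\"of test, and the second by building the bounding measure $\nu(\sigma)=\sum_{n}\mu(U_{n}\cap[\sigma]_{\mathcal{A}})$ for a Vitali cover test, with computability of $\nu$ obtained from lower semicomputability plus the computable total mass $\nu(\varepsilon)=\sum_{n}\mu(U_{n})$. The only cosmetic difference is that you start from a Schnorr test and observe it is a Vitali test with computable sum, whereas the paper invokes the Vitali-test characterization of Schnorr randomness directly; these are interchangeable by the equivalence already stated in Section~3.
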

\begin{proof}
The statement on Martin-Löf randomness follows from the bounded Martin-Löf
test (Theorem~\ref{thm:comp_rand_defs}\,(\ref{enu:bounded})).

For the last statement, assume $x$ is not Schnorr random. If $x$
is an unrepresented point or in a null cell, then $x$ is not computably
random by Definition~\ref{def:comp_rand}. Else, there is some Solovay
test $(V_{n})$ where $\sum_{n}\mu(V_{n})$ is computable and $(V_{n})$
Solovay covers $x$. Define $\nu\colon2^{<\omega}\rightarrow[0,\infty)$
as $\nu(\sigma)=\sum_{n}\mu(V_{n}\cap[\sigma]_{\mathcal{A}})$. Then
clearly, $\mu(V_{n}\cap[\sigma]_{\mathcal{A}})\leq\nu(\sigma)$ for
all $n$ and $\sigma$. By the Solovay test (Theorem~\ref{thm:comp_rand_defs}~(\ref{enu:vitali_cover})),
it is enough to show that $\nu$ is a computable measure. It is straightforward
to verify that $\nu(\sigma0)+\nu(\sigma1)=\nu(\sigma)$. As for the
computability of $\nu$; notice $\nu(\sigma)$ is lower semicomputable
uniformly from $\sigma$, since $\mu$ is a computable probability
measure (see Definition~\ref{def:comp_prob_meas}). Then since $\nu(\varepsilon)=\sum_{n}\mu(V_{n})$
is computable, $\nu$ is a computable measure.\end{proof}
\begin{thm}
\label{thm:indepentant-of-representation}The definition of computable
randomness does not depend on the choice of cell decomposition.\end{thm}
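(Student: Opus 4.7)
Without loss of generality, it suffices to prove one direction: if $x \in \mathcal{X}$ is not computably random with respect to a cell decomposition $\mathcal{A}$, then $x$ is not computably random with respect to any other cell decomposition $\mathcal{B}$. By Proposition~\ref{prop:computable-implies-schnorr}, computable randomness implies Kurtz randomness; and the set of ``bad points'' (unrepresented points plus points in null cells) for $\mathcal{B}$ is a null $\Sigma_2^0$ set, so I may assume $x$ lies in a positive-measure $\mathcal{B}$-cell and has a $\mathcal{B}$-name. The whole task, then, is to transfer a test from $\mathcal{A}$ to $\mathcal{B}$.

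The tool is the integral test (Theorem~\ref{thm:comp_rand_defs}\,(\ref{enu:integral})) together with its absolute-continuity strengthening~(\ref{eq:abs-cont}). Fix a computable probability measure $\nu_{\mathcal{A}}\colon 2^{<\omega} \to [0,\infty)$ and a lower semicomputable $g\colon \mathcal{X} \to [0,\infty]$ with $g(x) = \infty$, $\int_{[\sigma]_{\mathcal{A}}} g\, d\mu \leq \nu_{\mathcal{A}}(\sigma)$, and $\nu_{\mathcal{A}}(\sigma) \leq \int_{[\sigma]_{\mathcal{A}}} h\, d\mu$ for some $\mu$-integrable $h \geq 0$. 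The plan is: (i)~lift $\nu_{\mathcal{A}}$ to a genuine Borel measure $\tilde\nu$ on $\mathcal{X}$; (ii)~show $\tilde\nu$ is a \emph{computable} measure on $\mathcal{X}$, absolutely continuous with respect to $\mu$, still dominating $g\, d\mu$; (iii)~restrict $\tilde\nu$ to the $\mathcal{B}$-cells to obtain the desired $\mathcal{B}$-test.

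For (i) and (ii), apply Carath\'eodory's extension theorem to the premeasure $[\sigma]_{\mathcal{A}} \mapsto \nu_{\mathcal{A}}(\sigma)$ on the algebra generated by the $\mathcal{A}$-cells (modulo $\mu$-null sets) to obtain $\tilde\nu$. The domination $\nu_{\mathcal{A}}(\sigma) \leq \int_{[\sigma]_{\mathcal{A}}} h\, d\mu$ together with countable additivity extends to $\tilde\nu \leq h\, d\mu$ on every Borel set, since the cell algebra generates the Borel $\sigma$-algebra modulo $\mu$-null; hence $\tilde\nu \ll \mu$, and symmetrically $g\, d\mu \leq \tilde\nu$ as Borel measures on $\mathcal{X}$. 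For the computability of $\tilde\nu$: $\tilde\nu(\mathcal{X}) = \nu_{\mathcal{A}}(\varepsilon) = 1$ is computable, and for any $\Sigma_1^0$ set $U$, Proposition~\ref{prop:decompose-open-set} writes $U = \bigcup_i [\sigma_i]_{\mathcal{A}}$ $\mu$-a.e.\ with $\{\sigma_i\}$ a computably enumerable prefix-free family, so $\tilde\nu(U) = \sum_i \nu_{\mathcal{A}}(\sigma_i)$ is lower semicomputable uniformly in $U$; thus $\tilde\nu$ is a computable measure on $\mathcal{X}$ in the sense of Definition~\ref{def:comp_prob_meas}.

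For (iii), set $\nu_{\mathcal{B}}(\tau) := \tilde\nu([\tau]_{\mathcal{B}})$. Each $[\tau]_{\mathcal{B}}$ is coded by a $\mu$-a.e.\ decidable pair $(U_{\tau}, V_{\tau})$; since $\tilde\nu \ll \mu$ we also have $\tilde\nu(U_{\tau}) + \tilde\nu(V_{\tau}) = \tilde\nu(\mathcal{X}) = 1$, so the lower semicomputable numbers $\tilde\nu(U_{\tau})$ and $\tilde\nu(V_{\tau})$ sum to a computable value and are therefore individually computable, making $\nu_{\mathcal{B}}(\tau)$ computable uniformly in $\tau$. Disjointness of cells yields $\nu_{\mathcal{B}}(\tau 0) + \nu_{\mathcal{B}}(\tau 1) = \nu_{\mathcal{B}}(\tau)$ and $\nu_{\mathcal{B}}(\varepsilon) = 1$, so $\nu_{\mathcal{B}}$ is a computable probability measure on the $\mathcal{B}$-cells; finally $\int_{[\tau]_{\mathcal{B}}} g\, d\mu \leq \tilde\nu([\tau]_{\mathcal{B}}) = \nu_{\mathcal{B}}(\tau)$ provides an integral test with respect to $\mathcal{B}$ which, since $g(x) = \infty$, certifies that $x$ is not computably random with respect to $\mathcal{B}$. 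The main obstacle is step (ii): one needs $\tilde\nu$ to be not just a Borel measure but a \emph{computable} one, and absolute continuity plays a double role---promoting lower semicomputability of $\tilde\nu([\tau]_{\mathcal{B}})$ to computability, and carrying the inequality $g\, d\mu \leq \tilde\nu$ from the $\mathcal{A}$-cells across to the $\mathcal{B}$-cells.
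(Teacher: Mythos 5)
Your proposal follows essentially the same route as the paper's proof: take the bounding measure for the $\mathcal{A}$-test, promote it via Carath\'eodory to a genuine computable Borel measure on $\mathcal{X}$ that is absolutely continuous with respect to $\mu$, and then read it back off on the $\mathcal{B}$-cells. The only surface difference is that you transport the integral test where the paper transports the bounded Martin-L\"of test; since Theorem~\ref{thm:comp_rand_defs} makes these uniformly interchangeable, that choice is cosmetic, and your steps (i)--(iii) otherwise mirror the paper's Claim and its aftermath almost line by line (including the trick of getting computability of $\tilde\nu([\tau]_{\mathcal{B}})$ from two lower semicomputable reals summing to $1$).

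There is one substantive elision: you call $[\sigma]_{\mathcal{A}}\mapsto\nu_{\mathcal{A}}(\sigma)$ a ``premeasure'' and apply Carath\'eodory without verifying countable additivity on the semi-ring of cells, and this is exactly the step that can fail and exactly where the absolute-continuity condition~(\ref{eq:abs-cont}) earns its keep. Finite additivity only gives $\sum_{i<k}\nu_{\mathcal{A}}(\sigma_i)\leq\nu_{\mathcal{A}}(\tau)$ when $[\tau]_{\mathcal{A}}=\bigcup_i[\sigma_i]_{\mathcal{A}}$ $\mu$-a.e.; without~(\ref{eq:abs-cont}) mass can escape to the $\mu$-null ``boundary'' and the inequality can be strict (e.g.\ on $(2^{\omega},\lambda)$ with the natural cells, a prefix-free family covering everything except one path $z$, and $\nu$ concentrated along $z$). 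The paper closes this by the telescoping estimate $\nu(\tau)-\sum_{i<k}\nu(\sigma_i)=\int_{D_k}h\,d\mu\rightarrow0$. You do invoke~(\ref{eq:abs-cont}), but only later, for the domination $\tilde\nu\leq h\,d\mu$; you should also invoke it here, since otherwise $\tilde\nu$ need not exist. With that paragraph supplied, your argument is complete.
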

\begin{proof}
Before giving the details, here is the main idea. It suffices to convert
a test with respect to one cell decomposition $\mathcal{A}$ to another
test which covers the same points, but is with respect to a different
cell decomposition $\mathcal{B}$. In order to do this, take a bounding
measure $\nu$ with respect to $\mathcal{A}$ (which is really a measure
on $2^{\omega}$) and transfer it to an actual measure $\pi$ on $\mathcal{X}$.
Then transfer $\pi$ back to a bounding measure $\kappa$ with respect
to $\mathcal{B}$. In order to guarantee that this will work, we will
assume $\nu$ satisfies the absolute-continuity condition~\ref{eq:abs-cont},
which ensures that $\pi$ exists and is absolutely continuous with
respect to $\mu$.

Now I give the details. Assume $x\in\mathcal{X}$ is not computably
random with respect to the cell decomposition $\mathcal{A}$ of the
space. Let $\mathcal{B}$ be another cell decomposition. If $x$ is
an unrepresented point or in a null cell, then $x$ is not a Kurtz
random with respect to $(\mathcal{X},\mu)$, and by Proposition~\ref{prop:computable-implies-schnorr},
$x$ is not computably random with respect to $\mathcal{B}$.

So assume $x$ is neither an unrepresented point nor in a null cell.
By condition (\ref{enu:bounded}) of Theorem~\ref{thm:comp_rand_defs}
there is some Martin-Löf test $(U_{n})$ bounded by a probability
measure $\nu$ such that $(U_{n})$ covers $x$. Further, $\nu$ can
be assumed to satisfy the absolute-continuity condition (\ref{eq:abs-cont}).
\begin{claim*}
If $\nu$ is a probability measure satisfying the absolute-continuity
condition (\ref{eq:abs-cont}) with respect to $\mu$, then $\pi([\sigma]_{\mathcal{A}})=\nu(\sigma)$
defines a probability measure $\pi$ absolutely continuous with respect
to $\mu$, i.e.\ every $\mu$-null set is a $\pi$-null set.\end{claim*}
\begin{proof}[Proof of claim.]
I apply the Carathéodory extension theorem. A semi-ring $\mathcal{R}$
is a family of sets which contains $\varnothing$, is closed under
intersections, and for each $A,B$ in $\mathcal{R}$, there are pairwise
disjoint sets $C_{1},\ldots,C_{n}$ in $\mathcal{R}$ such that $A\smallsetminus B=C_{1}\cup\ldots\cup C_{n}$.
For example, $\{\varnothing\}\cup\{[\sigma]^{\prec}\}_{\sigma\in2^{<\omega}}$
is a semi-ring on $2^{\omega}$. Similarly, the collection $\{\varnothing\}\cup\{[\sigma]_{\mathcal{A}}\}_{\sigma\in2^{<\omega}}$
is ``$\mu$-almost-everywhere'' a semi-ring on $\mathcal{X}$, in
that it contains $\varnothing$, is closed under intersections \emph{up to $\mu$-a.e.\ equivalence},
and for each $A,B$ in $\mathcal{R}$, there are pairwise disjoint
sets $C_{1},\ldots,C_{n}$ in $\mathcal{R}$ such that $A\smallsetminus B=C_{1}\cup\ldots\cup C_{n}$
\emph{$\mu$-a.e.} It is extended to a semi-ring by adding every $\mu$-null
set and every set which is $\mu$-a.e.\ equal to $[\sigma]_{\mathcal{A}}$
for some $\sigma$. (If $A=[\sigma]_{\mathcal{A}}$, then set $\pi(A)=\nu(\sigma)$.)
Denote this semi-ring as $\mathcal{R}$.

For $A\in\mathcal{R}$, define
\[
\pi(A)=\begin{cases}
\nu(\sigma) & \text{if }A=[\sigma]_{\mathcal{A}}\ \mu\text{-a.e.}\\
0 & \text{if }A=\varnothing\ \mu\text{-a.e.}
\end{cases}.
\]
(This is well-defined since if $A=[\sigma]_{\mathcal{A}}=[\tau]_{\mathcal{A}}$
$\mu$-a.e., then the symmetric difference $[\sigma]^{\prec}\triangle[\tau]^{\prec}$
is equal to a finite disjoint union of basic open sets $\bigcup_{i=0}^{k-1}[\rho_{i}]^{\prec}$.
By the absolute continuity condition (\ref{eq:abs-cont}), 
\[
\nu([\sigma]^{\prec}\triangle[\tau]^{\prec})=\sum_{i=0}^{k-1}\nu(\rho_{i})\leq\sum_{i=0}^{k-1}\int_{[\rho_{i}]_{\mathcal{A}}}\!h\,d\mu=\int_{[\sigma]_{\mathcal{A}}\triangle[\tau]_{\mathcal{A}}}\!h\,d\mu=0.
\]
Similarly if $A=[\sigma]_{\mathcal{A}}=\varnothing$ $\mu$-a.e.,
then $\nu(\sigma)=0$.) Now, it is enough to show $\pi$ is a pre-measure,
specifically that it satisfies countable additivity. Assume for some
pairwise disjoint family $\{A_{i}\}$ and some $B$, both in the semi-ring
$\mathcal{R}$, that $B=\bigcup_{i}A_{i}$. If $B$ is $\mu$-null,
then each $A_{i}$ is as well. By the definition of $\pi$ on $\mu$-null
sets, we have $\pi(B)=0=\sum_{i}\pi(A_{i})$. If $B$ is not $\mu$-null,
then $B=[\tau]_{\mathcal{A}}$ $\mu$-a.e.\ for some $\tau$ and
each $A_{i}$ of positive $\mu$-measure is $\mu$-a.e. equal to $[\sigma_{i}]_{\mathcal{A}}$
for some $\sigma_{i}\succeq\tau$. For each $k$, let $C_{k}=[\tau]^{\prec}\smallsetminus\bigcup_{i=0}^{k-1}[\sigma_{i}]^{\prec}$,
which is a finite union of basic open sets in $2^{\omega}$. Let $D_{k}$
be the same union as $C_{k}$ but replacing each $[\sigma]^{\prec}$
with $[\sigma]_{\mathcal{A}}$. Then by the absolute continuity condition,
\[
\pi(B)-\sum_{i=0}^{k-1}\pi(A_{i})=\nu(\tau)-\sum_{i=0}^{k-1}\nu(\sigma_{i})=\nu(C_{k})\leq\int_{D_{k}}h\,d\mu
\]
Since $[\tau]_{\mathcal{A}}=\bigcup_{i}[\sigma_{i}]_{\mathcal{A}}$
$\mu$-a.e., the right-hand-side goes to zero as $k\rightarrow\infty$.
So $\pi$ is a pre-measure and may be extended to a measure by the
Carathéodory extension theorem. 

Similarly by approximation, $\pi$ satisfies $\pi(A)\leq\int_{A}h\,d\mu$
for all Borel sets $A$ and hence is absolutely continuous with respect
to $\mu$. 

To see that $\pi$ is a computable probability measure on $\mathcal{X}$,
take a $\Sigma_{1}^{0}$ set $U$. By Proposition~\ref{prop:decompose-open-set},
there is a c.e.,\ prefix-free set $\{\sigma_{i}\}$ (c.e.\ in the
code for $U$) of finite strings such that $U=\bigcup_{i}[\sigma_{i}]_{\mathcal{A}}$
$\mu$-a.e.\ (and so $\pi$-a.e.\ by absolute continuity). As this
union is disjoint, $\pi(U)=\sum_{i}\pi([\sigma_{i}]_{\mathcal{A}})=\sum_{i}\nu(\sigma_{i})$
$\mu$-a.e. and so $\pi(U)$ is lower semicomputable uniformly in
$U$. Since $\pi(\mathcal{X})=1$, $\pi$ is a computable probability
measure. This proves the claim.
\end{proof}
Let $\pi$ be as in the claim. Since $\pi$ is absolutely continuous
with respect to $\mu$, any a.e.~decidable set of $\mu$ is an a.e.~decidable
set of $\pi$. In particular, the values $\pi([\tau]_{\mathcal{B}})$
are computable uniformly from $\tau$. Now transfer $\pi$ back to
a measure $\kappa\colon2^{<\omega}\rightarrow[0,\infty)$ using $\kappa(\sigma)=\pi([\sigma]_{\mathcal{B}})$.
This is a computable probability measure.

Last, we show the Martin-Löf test $(U_{n})$ is bounded by $\kappa$
with respect to the cell decomposition $\mathcal{B}$. To see this,
fix $\tau\in2^{<\omega}$ and take the c.e.,\ prefix-free set $\{\sigma_{i}\}$
of finite strings such that $[\tau]_{\mathcal{B}}=\bigcup_{i}[\sigma_{i}]_{\mathcal{A}}$
$\mu$-a.e.~(and so $\pi$-a.e.). Then $\kappa(\tau)=\sum_{i}\nu(\sigma_{i})$,
and for each $n$, 
\[
\mu(U_{n}\cap[\tau]_{\mathcal{B}})=\sum_{i}\mu(U_{n}\cap[\sigma_{j}]_{\mathcal{A}})\leq\sum_{i}2^{-n}\nu(\sigma_{i})=2^{-n}\kappa(\tau).\qedhere
\]

\end{proof}
Theorem~\ref{thm:comp_rand_defs} is just a sample of the many equivalent
definitions for computable randomness. I conjecture that the other
known characterizations of computable randomness, see for example
Downey and Hirschfelt \cite[Section\ 7.1]{Downey2010}, can be extended
to arbitrary computable Polish spaces using the techniques above.
As well, other test characterizations for Martin-Löf randomness can
be extended to computable randomness by ``bounding the test'' with
a computable measure or martingale. (See Section~\ref{sec:Kolmogorov-complexity-and-randomness}
for an example using machines.) Further, the proof of Theorem~\ref{thm:indepentant-of-representation}
shows that the bounding measure $\nu$ can be assumed to be a measure
on $\mathcal{X}$, instead of $2^{\omega}$, under the additional
condition that $\mathcal{A}$ is a cell decomposition for both $(\mathcal{X},\mu)$
and $(\mathcal{X},\nu)$. Similarly, we could modify the martingale
test to assume $M$ is a martingale on $(\mathcal{X},\mu)$ (in the
sense of probability theory) with an appropriate filtration.

Actually, the above ideas can be used to show any $L^{1}$-bounded
a.e.~computable martingale (in the sense of probability theory) converges
on computable randoms if the filtration converges to the Borel sigma-algebra
(or even a ``computable'' sigma-algebra) and the $L^{1}$-bound
is computable. This can be extended to (the Schnorr layerwise-computable
representatives of) $L^{1}$-computable martingales as well. The proof
is beyond the scope of this paper and will be published separately. 

However, I will use the above ideas to give an integral test characterization
of computable randomness which avoids cell decompositions or any other
representation of the measure $\mu$.
\begin{thm}
\label{thm:integral-test-CR}A point $x$ is computably random with
respect to $(\mathcal{X},\mu)$ if and only if $g(x)<\infty$ for
all integral tests $g$ (as in Definition~\ref{def:solovay-integral})
bounded by a computable measure $\pi$, that is $\int_{A}\!g\,d\mu\leq\pi(A)$
for all measurable sets $A$.\end{thm}
\begin{proof}
($\Rightarrow$) Assume $x$ is not computably random with respect
to some cell decomposition $\mathcal{A}$. If $x$ is an unrepresented
point of $\mathcal{A}$ or is in a null open set, then $x$ is not
Schnorr random. By Theorem~\ref{thm:schnorr-tests}, there is an
integral test $g\colon\mathcal{X}\rightarrow[0,\infty]$ such that
$\int\!g\,d\mu=1$ and $g(x)=\infty$. It is enough to show the probability
measure $\pi$ given by $\pi(A)=\int_{A}g\,d\mu$ is computable. For
all $\Sigma_{1}^{0}$ sets $U$, $\pi(U)=\int_{U}\!g\,d\mu$ is lower
semicomputable uniformly in $U$ since $g\cdot\mathbf{1}_{U}$ is
lower semicomputable uniformly in $U$ (Hoyrup and Rojas \cite[Proposition~4.3.1]{Hoyrup2009}).
Therefore, $\pi$ is computable.

If $x$ is neither an unrepresented point of $\mathcal{A}$ nor is
in a null open set, then by Theorem~\ref{thm:comp_rand_defs}, there
is an integral test $g\colon\mathcal{X}\rightarrow[0,\infty]$ and
an computable probability measure $\nu$ on $2^{\omega}$ such that
$\int_{[\sigma]_{\mathcal{A}}}g\,d\mu\leq\nu(\sigma)\leq\int_{[\sigma]_{\mathcal{A}}}h\,d\mu$
for some $\mu$-integrable function $h$. By the claim in the proof
of Theorem~\ref{thm:indepentant-of-representation}, $\pi([\sigma]_{\mathcal{A}})=\nu(\sigma)$
defines a computable measure on $\mathcal{X}$ absolutely continuous
with respect to $\mu$. Let $U\subseteq\mathcal{X}$ be $\Sigma_{1}^{0}$.
By Proposition~\ref{prop:decompose-open-set} there is a prefix-free
set of strings $I\subseteq2^{<\omega}$ such that $U=\bigcup_{\sigma\in I}[\sigma]_{\mathcal{A}}$\ $\mu$-a.e.
Therefore 
\[
\int_{U}\!g\,d\mu=\sum_{\sigma\in I}\int_{[\sigma]_{\mathcal{A}}}\!g\,d\mu\leq\sum_{\sigma\in I}\nu(\sigma)=\sum_{\sigma\in I}\pi([\sigma]_{\mathcal{A}})=\pi(U).
\]
By approximation, this inequality extends to all measurable sets $A$
in place of $U$. Therefore, $\pi$ bounds $g$.

($\Rightarrow$) Assume $g(x)=\infty$ for some integral test bounded
by a computable measure $\pi$. We may assume $\pi$ is a probability
measure. Consider the computable probability measure $\rho=(\mu+\pi)/2$,
and let $\mathcal{A}$ be a cell decomposition for $\rho$. Then $\mathcal{A}$
is also a cell decomposition for $\mu$ and $\pi$. Therefore, $\nu(\sigma)=\pi([\sigma]_{\mathcal{A}})$
defines a computable measure on $2^{\omega}$, such that $\int_{[\sigma]_{\mathcal{A}}}\!g(x)\,d\mu\leq\nu(\sigma)$.
By Theorem~\ref{thm:comp_rand_defs}\,(\ref{enu:integral}), $x$
is not computably random.
\end{proof}
In Section~\ref{sec:Further-directions}, I give ideas on how computable
randomness can be defined on an even broader class of spaces, and
also on non-computable probability spaces. I end this section by showing
that Definition~\ref{def:comp_rand} is consistent with the usual
definitions of computable randomness with respect to $2^{\omega}$,
$\Sigma^{\omega}$, and $[0,1]$.
\begin{example}
Consider a computable probability measure $\mu$ on $2^{\omega}$.
It is easy to see that computable randomness in the sense of Definition~\ref{def:comp_rand}
with respect to the natural cell decomposition is equivalent to computable
randomness with respect to $2^{\omega}$ as defined in Definition~\ref{def:comp-random-cantor}.
Since Definition~\ref{def:comp_rand} is invariant under the choice
of cell decomposition (Theorem~\ref{thm:indepentant-of-representation}),
the two definitions agree on $(2^{\omega},\mu)$.
\end{example}

\begin{example}
\label{ex:comp-rand-on-3-omega-1}Consider a computable probability
measure $\mu$ on $\Sigma^{\omega}$ where $\Sigma=\{a_{0},\ldots,a_{k-1}\}$
is a finite alphabet. It is natural to define a martingale $M\colon\Sigma^{<\omega}\rightarrow[0,\infty)$
as one satisfying the fairness condition 
\[
M(\sigma a_{0})\mu(\sigma a_{0})+\cdots+M(\sigma a_{k-1})\mu(\sigma a_{k-1})=M(\sigma)\mu(\sigma)
\]
for all $\sigma\in\Sigma^{<\omega}$ (along with the impossibility
condition from Definition~\ref{def:Martingales-Cantor}). Since there
was nothing special so far about a two-symbol alphabet, Theorem~\ref{thm:comp_rand_defs}
can be easily adapted to a $k$-symbol alphabet giving the following
result: An a.e.\ computable martingale of type $M\colon{\subseteq{}}\Sigma^{<\omega}\rightarrow[0,\infty)$
succeeds on a sequence $x\in\Sigma^{\omega}$ if and only $g(x)=\infty$
for some lower semicomputable function $g\colon\Sigma^{\omega}\rightarrow[0,\infty]$
bounded by a computable measure $\nu$ on $\Sigma^{\omega}$. Applying
Theorem~\ref{thm:integral-test-CR}, we have that a sequence $x\in\Sigma^{\omega}$
is computably random (as in Definition~\ref{def:comp_rand}) with
respect to $(\Sigma^{\omega},\mu)$ if and only if no a.e.\ computable
martingale of type $M\colon{\subseteq{}}\Sigma^{<\omega}\rightarrow[0,\infty)$
succeeds on $x$.
\end{example}

\begin{example}
\label{ex:Base-inv-1}Let $([0,1],\lambda)$ be the space $[0,1]$
with the Lebesgue measure. Let 
\[
A_{i}=\{x\in[0,1]\mid\text{the }i\text{th binary digit of }x\text{ is }1\}.
\]
Then $\mathcal{A}=(A_{i})$ is a generator of $([0,1],\lambda)$ and
$[\sigma]_{\mathcal{A}}=[0.\sigma,0.\sigma+2^{-|\sigma|})$ a.e. A
little thought reveals that $x\in([0,1],\lambda)$ is computably random
(in the sense of Definition~\ref{def:comp_rand}) if and only if
the binary expansion of $x$ is computably random with respect to
$(2^{\omega},\lambda)$. This is the standard definition of computable
randomness on $([0,1],\lambda)$. Further, using a base $b$ other
than binary gives a different generator, for example let 
\[
A_{bi+j}=\{x\in[0,1]\mid\text{the }i\text{th }b\text{-ary digit of }x\text{ is }j\}
\]
where $0\leq j<b$. Yet, the computably random points remain the same.
Hence computable randomness on $([0,1],\lambda)$ is base invariant
\cite{Brattka2011,Silveira:2011fk} . (The proof of Theorem~\ref{thm:indepentant-of-representation}
has similarities to the proof of Brattka, Miller and Nies \cite{Brattka2011},
but as mentioned in the introduction, there are also key differences.)
Also see Example~\ref{ex:Base-inv-2}.
\end{example}
More examples are given at the end of Section~\ref{sec:CR-and-isomorphisms}.

\section{Machine characterizations of computable and Schnorr randomness\label{sec:Kolmogorov-complexity-and-randomness}}

In this section I give machine characterizations of computable and
Schnorr randomness with respect to computable probability spaces.
This has already been done for Martin-Löf randomness.

Recall the following definition and fact. 
\begin{defn}
A machine $M$ is a partial-computable function $M\colon{\subseteq{}}2^{<\omega}\rightarrow2^{<\omega}$.
A machine is \noun{prefix-free} if $\dom M$ is prefix-free. For
a prefix-free machine $M$, let the \noun{prefix-free Kolmogorov complexity of $\sigma$ relative to $M$}
be
\[
K_{M}(\sigma)=\inf\left\{ |\tau|\;\middle|\;\tau\in2^{<\omega}\text{ and }M(\tau)=\sigma\right\} .
\]
(There is a non-prefix-free version of complexity as well.)\end{defn}
\begin{thm}[{Schnorr (see \cite[Theorem~6.2.3]{Downey2010})}]
A sequence $x\in(2^{\omega},\lambda)$ is Martin-Löf random if and
only if for all prefix-free machines $M$, 
\begin{equation}
\limsup_{n\rightarrow\infty}\left(n-K_{M}(x\upharpoonright n)\right)<\infty.\label{eq:Kolmogorov}
\end{equation}

\end{thm}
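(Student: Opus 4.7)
The plan is to prove the two directions separately, with the Kraft inequality $\sum_{\tau \in \dom M} 2^{-|\tau|} \leq 1$ for prefix-free machines serving as the bridge between measure and description length. For the forward direction, I would fix a prefix-free machine $M$ and, for each constant $c$, define
\[
U_c = \bigcup_{\sigma \in 2^{<\omega},\, K_M(\sigma) \leq |\sigma| - c} [\sigma]^{\prec}.
\]
Each $U_c$ is uniformly $\Sigma_1^0$, and by partitioning the union over witnessing $\tau$'s in $\dom M$ and applying Kraft,
\[
\lambda(U_c) \leq \sum_{\tau \in \dom M} 2^{-(|\tau|+c)} \leq 2^{-c}.
\]
Hence $(U_c)_{c \in \mathbb{N}}$ is a Martin-L\"of test, and a string $x$ satisfies $\limsup_n (n - K_M(x \upharpoonright n)) = \infty$ precisely when $x \in \bigcap_c U_c$. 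If $x$ is Martin-L\"of random, it escapes this test for every $M$.

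For the reverse direction, assume $x$ is not Martin-L\"of random and fix a universal Martin-L\"of test $(V_n)$ covering $x$. I would write each $V_n$ as a disjoint c.e.\ union of cylinders, $V_n = \bigsqcup_i [\sigma_{n,i}]^{\prec}$, so that $\sum_i 2^{-|\sigma_{n,i}|} = \lambda(V_n) \leq 2^{-n}$. Enumerating the request set $\{(|\sigma_{n,i}| - n,\, \sigma_{n,i})\}_{n,i}$ gives total weight $\sum_{n,i} 2^{-(|\sigma_{n,i}| - n)} \leq \sum_n 2^n \cdot 2^{-n} = \sum_n 1$, which diverges, so I need to discount by a factor of $2^{-2n}$ (replacing $V_n$ by $V_{2n}$, or equivalently requesting descriptions of length $|\sigma_{n,i}| - n + 2\log n + O(1)$). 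Under this adjustment, the total weight is at most $\sum_n n^{-2} < \infty$, so the Kraft--Chaitin theorem yields a prefix-free machine $M$ with $K_M(\sigma_{n,i}) \leq |\sigma_{n,i}| - n + 2 \log n + O(1)$. Since $x$ is covered by the test, for every $n$ some $\sigma_{n,i}$ is an initial segment of $x$; letting $n \to \infty$, the quantity $|\sigma_{n,i}| - K_M(x \upharpoonright |\sigma_{n,i}|)$ grows to infinity, establishing (\ref{eq:Kolmogorov}) fails for this $M$.

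The main technical obstacle is getting the weights to sum to a finite computable bound in the reverse direction; naively applying Kraft--Chaitin to $(V_n)$ directly yields a divergent weight, so one must either sparsify the test or insert a Solovay-style $\sum 1/n^2$ factor before invoking Kraft--Chaitin. A secondary subtlety is ensuring that the machine built is genuinely prefix-free rather than merely a partial-computable function with prefix-free image on the constructed inputs, which is precisely what the Kraft--Chaitin (request-set) construction guarantees.
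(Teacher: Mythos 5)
Your proposal is correct, and it is essentially the standard argument behind the result the paper cites without proof (Downey--Hirschfeldt, Theorem 6.2.3): the forward direction via the measure bound $\lambda(U_c)\leq 2^{-c}$ from the Kraft inequality, and the reverse direction via a Kraft--Chaitin request set built from a covering test, with the necessary sparsification (using $V_{2n}$, or a $\sum_n n^{-2}$ weighting) that you correctly identify as the one technical point. Note also that $[\sigma_{n,i}]^{\prec}\subseteq V_n$ forces $|\sigma_{n,i}|\geq n$, so the requested description lengths are automatically nonnegative.
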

Schnorr's theorem has been extended to both Schnorr and computable
randomness. 
\begin{defn}
For a machine $M$ define the semimeasure $\meas_{M}\colon2^{<\omega}\rightarrow[0,\infty)$
as 
\[
\meas_{M}(\sigma)=\sum_{\begin{subarray}{c}
\tau\in\dom M\\
M(\tau)\succeq\sigma
\end{subarray}}2^{-|\tau|}.
\]
A machine $M$ is a \noun{computable-measure machine} if $\meas_{M}(\varepsilon)$
is computable. A machine $M$ is a \noun{bounded machine} if there
is some computable-measure $\nu$ such that $\meas_{M}(\sigma)\leq\nu(\sigma)$
for all $\sigma\in2^{<\omega}$. 

Downey, Griffiths, and LaForte \cite{Downey2004a} showed that $x\in(2^{\omega},\lambda)$
is Schnorr random precisely if formula~(\ref{eq:Kolmogorov}) holds
for all prefix-free, computable-measure machines. Mihailovi\'{c} (see
\cite[Thereom 7.1.25]{Downey2010}) showed that $x\in(2^{\omega},\lambda)$
is computably random precisely if formula~(\ref{eq:Kolmogorov})
holds for all prefix-free, bounded machines.
\end{defn}
Schnorr's theorem was extended to all computable probability measures
on Cantor space by Gács \strong{\cite{Gacs:1980fj}}. Namely, replace
formula~(\ref{eq:Kolmogorov}) with
\[
\limsup_{n\rightarrow\infty}\left(-\log_{2}\mu([x\upharpoonright n]^{\prec})-K_{M}(x\upharpoonright n)\right)<\infty.
\]
If $\mu([x\upharpoonright n])=0$ for any $n$ then we say this inequality
is false. Hoyrup and Rojas \cite{Hoyrup2009} extended this to any
computable probability space. Here, I do the same for Schnorr and
computable randomness. (I include Martin-Löf randomness for completeness.)
\begin{thm}
Let $(\mathcal{X},\mu)$ be a computable probability space with a.e.\ decidable cell decomposition $\mathcal{A}$ and let
$x\in\mathcal{X}$.
\begin{enumerate}
\item $x\in\mathcal{X}$ is Martin-Löf random precisely if
\begin{equation}
\limsup_{n\rightarrow\infty}\left(-\log_{2}\mu([x\upharpoonright_{\mathcal{A}}n]_{\mathcal{A}})-K_{M}(x\upharpoonright_{\mathcal{A}}n)\right)<\infty.\label{eq:Kol-comp-meas}
\end{equation}
holds for all prefix-free machines $M$. (Again, we say formula~\emph{(\ref{eq:Kol-comp-meas})}
is false if $\mu([x\upharpoonright n])=0$ for any $n$.)
\item $x\in\mathcal{X}$ is computably random precisely if formula~\emph{(\ref{eq:Kol-comp-meas})}
holds for all prefix-free, bounded machines $M$.
\item $x\in\mathcal{X}$ is Schnorr random precisely if formula~\emph{(\ref{eq:Kol-comp-meas})}
holds for all prefix-free, computable measure machines $M$.
\end{enumerate}

Further, \emph{(1)} through \emph{(3)} hold even if $M$ is not
assumed to be prefix-free, but only that $\meas_{M}(\varepsilon)\leq1$.

\end{thm}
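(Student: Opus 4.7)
The plan is in two stages: (i) reduce to a computable probability measure on Cantor space via a cell decomposition, and (ii) extend the classical machine characterizations of Mihailovi\'{c} (computable randomness) and Downey--Griffiths--LaForte (Schnorr randomness) from the Lebesgue measure on $2^{\omega}$ to an arbitrary computable probability measure on $2^{\omega}$, using the reformulations of Theorem~\ref{thm:comp_rand_defs}.

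\emph{Reduction to Cantor space.} Fix a cell decomposition $\mathcal{A}$ of $(\mathcal{X},\mu)$ (Theorem~\ref{thm:a.e.- decidable-repr}) and define the computable probability measure $\mu_{\mathcal{A}}$ on $2^{\omega}$ by $\mu_{\mathcal{A}}(\sigma)=\mu([\sigma]_{\mathcal{A}})$. By Proposition~\ref{prop:decompose-open-set}, every $\Sigma^{0}_{1}$ subset of $\mathcal{X}$ is $\mu$-a.e.\ a disjoint union of cells $[\sigma_{i}]_{\mathcal{A}}$, giving a measure-preserving correspondence between $\Sigma^{0}_{1}$ subsets of $\mathcal{X}$ and $2^{\omega}$ modulo null sets. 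This transfers Martin-L\"of, Schnorr, and bounded Martin-L\"of tests (Theorem~\ref{thm:comp_rand_defs}(\ref{enu:bounded})) in both directions, so a represented non-null-cell point $x\in\mathcal{X}$ satisfies any of the three randomness notions iff $\name_{\mathcal{A}}(x)\in(2^{\omega},\mu_{\mathcal{A}})$ does. Combined with $\mu([x\upharpoonright_{\mathcal{A}}n]_{\mathcal{A}})=\mu_{\mathcal{A}}(\name_{\mathcal{A}}(x)\upharpoonright n)$, this reduces the theorem to its Cantor-space version for an arbitrary computable probability measure; unrepresented and null-cell points fail even Kurtz randomness, so they need no separate treatment.

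\emph{Machine characterizations on $(2^{\omega},\mu)$.} Part (1) is the cited G\'acs--Hoyrup--Rojas theorem. For (2) and (3), the machine-to-test direction uses $U_{n}=\{y:\meas_{M}(y\upharpoonright k)>2^{n}\mu(y\upharpoonright k)\text{ for some }k\}$, decomposed prefix-freely as $\bigcup_{i}[\sigma_{n,i}]^{\prec}$ of minimal witnesses. A prefix-free summation of $\mu(\sigma_{n,i})<2^{-n}\meas_{M}(\sigma_{n,i})$ gives $\mu(U_{n}\cap[\tau]^{\prec})\le 2^{-n}\meas_{M}(\tau)$, and since $2^{-K_{M}(\sigma)}\le\meas_{M}(\sigma)$, formula~(\ref{eq:Kol-comp-meas}) fails on $x$ exactly when $x\in\bigcap_{n}U_{n}$. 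When $M$ is bounded by a computable measure $\nu$, this yields a bounded Martin-L\"of test bounded by $\nu$, defeating computable randomness via Theorem~\ref{thm:comp_rand_defs}(\ref{enu:bounded}). When $M$ is computable-measure, $\meas_{M}$ is in fact uniformly computable (pairing its lower semicomputability with the upper bound $\meas_{M}(\sigma)\le\meas_{M}(\varepsilon)-\sum_{|\tau|=|\sigma|,\tau\neq\sigma}\meas_{M}(\tau)$); distributing the leak mass yields a computable probability measure that bounds $(U_{n})$ and, crucially, makes $\mu(U_{n})$ uniformly computable, so $(U_{n})$ is a Schnorr test. For the converse, given a witnessing test (in the appropriate form), I adapt the Kraft--Chaitin construction of Mihailovi\'{c} and DGL: pass to the savings-property reformulation (Theorem~\ref{thm:comp_rand_defs}(\ref{enu:mart_savings})) of the underlying supermartingale $\meas_{M}/\mu$, and enter one request only at each ``first hit'' of the savings function crossing a new dyadic level, so that the total Kraft weight telescopes and is dominated by the bounding measure (bounded case) or has computable total (computable-measure case).

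\emph{Non-prefix-free machines and the main obstacle.} The ``further'' statement follows from the Kraft--Chaitin theorem: any machine $M$ with $\meas_{M}(\varepsilon)\le 1$ is equivalent to a prefix-free machine $M'$ with $K_{M'}=K_{M}+O(1)$ and the same computable-measure / bounded status, so the prefix-free versions of (1)--(3) immediately imply the general ones. The main technical obstacle is the Kraft-weight bookkeeping in the test-to-machine direction: the naive request length $l_{n,i}\approx -\log_{2}\mu(\sigma_{n,i})-n$ produces a Kraft sum $\sum_{n,i}2^{-l_{n,i}}\sim\sum_{n}2^{n}\mu(U_{n})$ that already diverges for an ordinary Martin-L\"of test, so one must work with the savings-martingale refinement and exploit the fine bound $\mu(U_{n}\cap[\sigma])\le 2^{-n}\nu(\sigma)$ to telescope the Kraft sum against $\nu(\varepsilon)$. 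This is precisely where the paper's reformulations of computable randomness do the heavy lifting.
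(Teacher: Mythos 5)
Your machine-to-test direction contains a genuine gap. You build $U_{n}$ as a prefix-free union of \emph{minimal} witnesses $\sigma_{n,i}$ with $\mu(\sigma_{n,i})<2^{-n}\meas_{M}(\sigma_{n,i})$ and claim $\mu(U_{n}\cap[\tau]^{\prec})\leq2^{-n}\meas_{M}(\tau)$ for all $\tau$. This holds when no witness lies below $\tau$ (sum over the witnesses extending $\tau$ and use subadditivity of $\meas_{M}$), but it fails when $\tau$ properly extends a witness $\sigma_{n,i}$: there $\mu(U_{n}\cap[\tau]^{\prec})=\mu(\tau)$, while nothing controls how $\meas_{M}$ --- or the bounding measure $\nu$ --- distributes its mass inside $[\sigma_{n,i}]^{\prec}$. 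Concretely, take $\mu=\lambda$, a single witness $\sigma$, and $\nu$ concentrated on $[\sigma0]^{\prec}$; then $\mu(U_{n}\cap[\sigma1]^{\prec})=\lambda(\sigma1)>0=2^{-n}\nu(\sigma1)$, so $(U_{n})$ is a Martin-L\"of test but \emph{not} a bounded one, and the computable-randomness half of your argument collapses. The repair is standard and is exactly what the paper's reformulations are for: skip the test entirely and use the martingale $d(\sigma)=\nu(\sigma)/\mu([\sigma]_{\mathcal{A}})$ of Theorem~\ref{thm:comp_rand_defs}\,(\ref{enu:mart}) --- if the formula fails at level $c$ then $\nu(x\upharpoonright_{\mathcal{A}}n)\geq\meas_{M}(x\upharpoonright_{\mathcal{A}}n)\geq2^{-K_{M}(x\upharpoonright_{\mathcal{A}}n)}\geq2^{c}\mu([x\upharpoonright_{\mathcal{A}}n]_{\mathcal{A}})$, so $d$ succeeds --- or equivalently the integral test $g(x)=\sum_{n}\mu([x\upharpoonright_{\mathcal{A}}n]_{\mathcal{A}})^{-1}\sum_{M(\rho)=x\upharpoonright_{\mathcal{A}}n}2^{-|\rho|}$, which satisfies $\int_{[\tau]_{\mathcal{A}}}g\,d\mu=\meas_{M}(\tau)$ \emph{exactly} and so feeds directly into Theorem~\ref{thm:comp_rand_defs}\,(\ref{enu:integral}) in both the bounded and the computable-measure cases. (Your claim that $\meas_{M}(\varepsilon)$ computable makes $\meas_{M}$ uniformly computable is true, but the stated upper bound does not converge to $\meas_{M}(\sigma)$; you need first that $\sum_{|\tau|=m}\meas_{M}(\tau)=\meas_{M}(\varepsilon)-\sum_{|M(\rho)|<m}2^{-|\rho|}$ is computable.)

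Two further points. First, you silently prove the pairing bounded machines $\leftrightarrow$ computable randomness and computable-measure machines $\leftrightarrow$ Schnorr randomness; that is the correct (Mihailovi\'{c} / Downey--Griffiths--LaForte) pairing and matches the paragraph preceding the theorem, but it is the transpose of how items (2) and (3) are actually labelled in the theorem statement --- worth flagging rather than passing over. Second, your test-to-machine direction is only gestured at, and the phrase ``the savings-property reformulation of the underlying supermartingale $\meas_{M}/\mu$'' does not parse in that direction (there is no machine yet); the paper's intended argument is the cited one: speed up to $\mu(U_{k})\leq2^{-2k}$, decompose $U_{k}=\bigcup_{i}[\sigma_{i}^{k}]_{\mathcal{A}}$ prefix-freely, and request $(-\log\mu([\sigma_{i}^{k}]_{\mathcal{A}})-k+2,\sigma_{i}^{k})$, so that the Kraft sum is $\sum_{k}2^{k-1}\mu(U_{2k})\leq\sum_{k}2^{-k-1}$ with no savings device needed, the per-cylinder bound of a bounded test giving $\meas_{M}$ dominated by $\nu$, and computability of $\sum_{k}\mu(U_{k})$ giving a computable-measure machine. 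Your Kraft--Chaitin reduction of the non-prefix-free case is fine.
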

\begin{proof}
Slightly modify the proofs of Theorems~6.2.3, 7.1.25, and 7.1.15
in Downey and Hirschfelt \cite{Downey2010}, respectively.
\end{proof}

\section{Computable randomness and isomorphisms\label{sec:CR-and-isomorphisms}}

In this section I give another piece of evidence that the definition
of computable randomness in this paper is robust, namely that the
computably random points are preserved under isomorphisms between
computable probability spaces. I also show a one-to-one correspondence
between cell decompositions of a computable measure space and isomorphisms
from that space to the Cantor space. However, first I need to define
partial computable functions and present some properties.

\subsection{Partial computable functions on computable Polish spaces}
\begin{defn}
\label{def:partial-comp-metric}Let $\mathcal{X}=(X,d_{\mathcal{X}},A)$
and $\mathcal{Y}=(Y,d_{\mathcal{Y}},B)$ be computable Polish spaces.
A partial function $f\colon{\subseteq{}}\mathcal{X}\rightarrow\mathcal{Y}$
is \noun{partial computable} if there is a partial computable function
$g\colon{\subseteq{}}A^{\omega}\rightarrow B^{\omega}$ (as in Definition~\ref{def:part-fun-seq-to-seq})
such that for all $a\in A^{\omega}$, the following hold.
\begin{enumerate}
\item $a\in\dom g$ if and only if $a$ is a Cauchy name for some $x\in\dom f$.
\item If $x\in\dom{f}$ and $a$ is a Cauchy name for $x$, then $g(a)$
is a Cauchy name for $f(x)$.
\end{enumerate}
\end{defn}
This next theorem gives a convenient characterization of partial computable
functions. It may be previously known, but I was not able to find
a satisfactory reference.\footnote{Adrian Maler communicated to me that a similar result is in an unpublished
paper of his.} The closest I could find was a result by Hemmerling \cite{Hemmerling:2002aa}
that the domain of a partial computable function is $\Pi_{2}^{0}$,
but his result assumes that the computable Polish space $\mathcal{X}$
``admits a generalized finite stratification''. This assumption
is not needed. (Also compare with Hoyrup and Rojas \cite[Theorem~3.3.1]{Hoyrup2009}.)
\begin{prop}
\label{prop:part-comp-char}Let $\mathcal{X}$ and $\mathcal{Y}$
be computable Polish spaces. Let $(V_{i})_{i\in\mathbb{N}}$ be an
enumeration of the $\Sigma_{1}^{0}$ subsets of $\mathcal{Y}$ in
a standard way.\footnote{For concreteness, let $(W_{e})_{e\in\mathbb{N}}$ be the standard
listing of c.e.~subsets of $\mathbb{N}$. If $\mathcal{Y}=(Y,d_{\mathcal{Y}},B)$,
use a computable pairing function to get a listing $(I_{e})_{e\in\mathbb{N}}$
of all c.e.~subsets of $B\times\mathbb{\mathbb{Q}}_{>0}$. Then set
$V_{e}=\bigcup_{(b,r)\in I_{e}}B(b,r)$.} Consider a partial function $f\colon D\subseteq\mathcal{X}\rightarrow\mathcal{Y}$.
Then $f$ is partial computable if and only if the following two conditions
hold.
\begin{enumerate}
\item The domain $D$ of $f$ is $\Pi_{2}^{0}$.
\item There is a sequence $(U_{i})_{i\in\mathbb{N}}$ such that $U_{i}\subseteq\mathcal{X}$
is $\Sigma_{1}^{0}$ in $i$ and for all $x\in D$ 
\[
x\in U_{i}\quad\leftrightarrow\quad f(x)\in V_{i}.
\]

\end{enumerate}
\end{prop}
\begin{proof}
Let $\mathcal{X}=(X,d_{\mathcal{X}},A)$ and $\mathcal{Y}=(Y,d_{\mathcal{Y}},B)$. 

For the below arguments, it helps to have a few definitions. Say that
$a\in A^{<\omega}$ is a \noun{partial $(\mathcal{X},\delta)$-Cauchy name}
if for all $k\leq n<|a|$, $d_{\mathcal{X}}(a(n),a(k))\leq2^{-k}(1+\delta)$,
and $a\in A^{<\omega}$ is a \noun{partial $(\mathcal{X},\delta^{-})$-Cauchy name}
if for all $k\leq n<|a|$, $d_{\mathcal{X}}(a(n),a(k))<2^{-k}(1+\delta)$.
(Notice the inequalities are different in each. Also, $\delta$ may
be positive, negative, or zero.) Similarly, define an \noun{$(\mathcal{X},\delta)$-Cauchy name}
and an \noun{$(\mathcal{X},\delta^{-})$-Cauchy name}. (An \noun{$(\mathcal{X},0)$-Cauchy name}
is the same as an $\mathcal{X}$-Cauchy name.) Say that $a\in A^{<\omega}$
is \noun{$(\mathcal{X},\delta)$-compatible with} $x$ if for all
$n<|a|$, $d_{\mathcal{X}}(a(n),x)\leq2^{-n}(1+\delta)$, and $a\in A^{<\omega}$
is \noun{$(\mathcal{X},\delta^{-})$-compatible with} $x$ if for
all $n<|a|$, $d_{\mathcal{X}}(a(n),x)<2^{-n}(1+\delta)$. 

$(\Rightarrow)$ Assume that $f\colon{\subseteq{}}\mathcal{X}\rightarrow\mathcal{Y}$
is partial computable as in Definition~\ref{def:partial-comp-metric}
and is given by a partial computable function $g\colon{\subseteq{}}A^{\omega}\rightarrow B^{\omega}$.
In turn, by Definition~\ref{def:part-fun-seq-to-seq}, $g$ is given
by a partial computable function $h\colon{\subseteq{}}A^{<\omega}\rightarrow B^{<\omega}$. 

(1) First we show that $\dom f$ is $\Pi_{2}^{0}$. By Definition~\ref{def:partial-comp-metric},
$x\in\dom f$ if and only if \emph{every} $\mathcal{X}$-Cauchy name
for $x$ is in $\dom g$. Also, by Definition~\ref{def:partial-comp-metric},
$x\in\dom f$ if and only if \emph{some} $\mathcal{X}$-Cauchy name
for $x$ is in $\dom g$. Because both quantifiers hold, we can replace
``$\mathcal{X}$-Cauchy name'' with ``$(\mathcal{X},0^{-})$-Cauchy
name'' in both characterizations. Then we have that $x\in\dom f$
if and only if the following condition holds.
\begin{itemize}
\item [($*$)] for every $n\in\mathbb{N}$, letting $\delta=2^{-n}$, every
partial $(\mathcal{X},-\delta)$-Cauchy name $a\in A^{<\omega}$ which
is $(\mathcal{X},-\delta)$-compatible with $x$ can be extended to
a partial $(\mathcal{X},-\delta/2^{-})$-Cauchy name $a'\in A^{<\omega}$
which is $(\mathcal{X},-\delta/2^{-})$-compatible with $x$ such
that $a'\in\dom h$ and $|h(a')|>n$.
\end{itemize}
\noindent (For if ($*$) holds, there is an $(\mathcal{X},0^{-})$-Cauchy
name for $x$ in the domain of $g$, and if ($*$) does not hold,
then there is an $(\mathcal{X},0^{-})$-Cauchy name for $x$ not in
the domain of $g$.) It remains to show that the set of $x$ satisfying
property ($*$) is $\Pi_{2}^{0}$. Towards that goal, for each $a\in A^{<\omega}$
and $n\in\mathbb{N}$ let the sets $W_{a,n}^{(0)}$, $W_{a,n}^{(1)}$
and $W_{a,n}^{(2)}$ be defined as follows, where $\delta=2^{-n}$. 
\begin{itemize}
\item $W_{a,n}^{(0)}=\mathcal{X}$ if $a$ is not a partial $(\mathcal{X},-\delta)$-Cauchy
name, else $W_{a,n}^{(0)}=\varnothing$. 
\item $W_{a,n}^{(1)}$ is the set of $x\in\mathcal{X}$ not $(\mathcal{X},-\delta)$-compatible
with $a$. 
\item $W_{a,n}^{(2)}$ is the set of $x\in\mathcal{X}$ such that there
exists $a'\in A^{<\omega}$ extending $a$ for which $a'$ is a partial
$(\mathcal{X},-\delta/2^{-})$-Cauchy name, $a'$ is $(\mathcal{X},-\delta/2^{-})$-compatible
with $x$, $a'\in\dom h$ and $|h(a')|>n$.
\end{itemize}
\noindent These sets are all $\Sigma_{1}^{0}$ in $a$ and $n$.
Moreover, $x$ satisfies ($*$) if and only if 
\[
x\in\bigcap_{n\in\mathbb{N}}\bigcap_{a\in A^{<\omega}}\left(W_{a,n}^{(0)}\cup W_{a,n}^{(1)}\cup W_{a,n}^{(2)}\right).
\]
Therefore $\dom f$ is $\Pi_{2}^{0}$.

(2) To show the second property, informally $U_{i}$ will be the set
of points $x\in\mathcal{X}$ that at some point in the algorithm for
$f$ it becomes apparent that if $x\in\dom f$ then $f(x)\in V_{i}$.

Formally, $U_{i}$ is defined as follows. Fix $i$ and set $\delta=2^{-i}$.
The set $V_{i}$ is given by a set of pairs $\{(b_{k}^{i},r_{k}^{i})\}\subseteq B\times\mathbb{Q}_{>0}$
which is c.e.\ in $i$ such that $V_{i}=\bigcup_{k}B(b_{k}^{i},r_{k}^{i})$.
Let $V''_{i}\subseteq B^{<\omega}$ be the set of all finite sequences
$b\in B^{<\omega}$ such that for some $n<|b|$ and some $k\in\mathbb{N}$,
\begin{equation}
d_{\mathcal{Y}}(b(n),b_{k}^{i})<r_{k}^{i}-2^{-n}(1+\delta).\label{eq:finite-cauchy-in-codomain}
\end{equation}
Let $V'_{i}\subseteq B^{\omega}$ be all sequences with an initial
segment in $V''_{i}$. Both $V'_{i}$ and $V''_{i}$ are $\Sigma_{1}^{0}$
in $i$.

Assume that $b\in B^{\omega}$ is a $\mathcal{Y}$-Cauchy name---and
hence a ($\mathcal{Y},\delta^{-})$-Cauchy name---for some $y\in\mathcal{Y}$.
 Then we claim that $b\in V'_{i}$ if and only if $y\in V_{i}$. To
see this, first assume $b\in V'_{i}$. Then there is some $k$ and
some $n$ such that (\ref{eq:finite-cauchy-in-codomain}) holds. Then
\[
d_{\mathcal{Y}}(y,b_{k}^{i})\leq d_{\mathcal{Y}}(y,b(n))+d_{\mathcal{Y}}(b(n),b_{k}^{i})<2^{-n}(1+\delta)+r_{k}^{i}-2^{-n}(1+\delta)=r_{k}^{i}
\]
which implies that $y\in B(b_{k}^{i},r_{k}^{i})\subseteq V_{i}$.
Conversely, assume $y\in V_{i}$. Then $y\in B(b_{k}^{i},r_{k}^{i})$
for some $k$. Let $n$ be large enough that $d_{\mathcal{Y}}(y,b_{k}^{i})<r_{k}^{i}-2\cdot2^{-n}(1+\delta)$.
Then 
\[
\delta_{\mathcal{Y}}(b(n),b_{k}^{i})\leq d_{\mathcal{Y}}(y,b(n))+d_{\mathcal{Y}}(y,b_{k}^{i})<2^{-n}(1+\delta)+r_{k}^{i}-2\cdot2^{-n}(1+\delta)=r_{k}^{i}-2^{-n}(1+\delta).
\]
Hence $b\in V_{i}'$. This proves the claim.

Let $U_{i}''\subseteq A^{<\omega}$ be the preimage $h^{-1}(V_{i}'')$
which is $\Sigma_{1}^{0}$ in $i$ since $h$ is a partial computable
function of type $h\colon{\subseteq{}}A^{<\omega}\rightarrow B^{<\omega}$.
Let $U'_{i}\subseteq A^{\omega}$ be all sequences with an initial
segment in $U''_{i}$. This is also $\Sigma_{1}^{0}$ in $i$. Finally,
let $U_{i}$ be the set of all points in $\mathcal{X}$ with an $(\mathcal{X},0^{-})$-Cauchy
name in $U'_{i}$. This is $\Sigma_{1}^{0}$ in $i$, since 
\[
U{}_{i}=\bigcup\left\{ B(a(0),2^{-0})\cap\ldots\cap B(a(n-1),2^{-(n-1)})\,\middle|\,a\in U''_{i}\ \text{and}\ n=|a|\right\} .
\]

Let $x$ be in $\dom f$. The goal now is to show that $x\in U_{i}$
if and only if $f(x)\in V_{i}$. First assume $x\in U_{i}$. Then
there is some $(\mathcal{X},0^{-})$-Cauchy name $a$ in $U'_{i}$
for $x$ and some initial segment $a\upharpoonright n$ in $U''_{i}$.
Then $h(a\upharpoonright n)\in V''_{i}$. Since $x\in\dom f$, both
$g(a)$ is a $\mathcal{Y}$-Cauchy name for $f(x)$ and $g(a)\in V'_{i}$
(since $g(a)$ extends $h(a\upharpoonright n)\in V''_{i}$). By the
above claim, $f(x)\in V_{i}$.

Conversely assume $f(x)\in V_{i}$. Let $a$ be some $(\mathcal{X},0^{-})$-Cauchy
name for $x$. (These always exist. Just remove the first element
of a Cauchy name.) Since $x\in\dom f$ then $g(a)$ is a Cauchy name
for $f(x)\in V_{i}$. By the above claim, $g(a)\in V'_{i}$. Since
$g(a)=\lim_{n}h(a\upharpoonright n)$, there is some $n$ large enough
that $h(a\upharpoonright n)\in V''_{i}$ and $a\upharpoonright n\in U''_{i}$.
This shows $a\in U'_{i}$. Since $a$ is an $(\mathcal{X},0^{-})$-Cauchy
name, $x\in U_{i}$.

$(\Leftarrow)$ Let $f\colon D\subseteq\mathcal{X}\rightarrow\mathcal{Y}$
be a partial function satisfying conditions (1) and (2). By condition
(1), $D=\bigcap_{n}W_{n}$ where $W_{n}\subseteq\mathcal{X}$ is $\Sigma_{1}^{0}$
in $n$. Let $U_{i}$ be as in condition (2). Let $(b,n)\mapsto i(b,n)$
be a computable function such that $V_{i(b,n)}=B(b,2^{-(n+1)})$.
One can compute $f$ as follows by constructing the corresponding
partial function $g\colon{\subseteq{}}A^{\omega}\rightarrow B^{\omega}$.

Given $a\in A^{\mathbb{\omega}}$, read initial segments of $a$ to
construct initial segments of $g(a)$. To find the $n$th item in
the Cauchy name $g(a)$, keep reading initial segments of $a$ until
it becomes apparent that $a$ is no longer a Cauchy name (in which
case, stop constructing $g(a)$) or it becomes apparent that for some
$k\in\mathbb{N}$ and $b\in B$, 
\[
B(a(k),2^{-k})\subseteq W_{n}\cap U_{i(b,n)}.
\]
 In this case, set the $n$th element of $g(a)$ to be $b$.\footnote{Formally, $W_{n}\cap U_{i(b,n)}$ is $\Sigma_{1}^{0}$ in $n$ and
$b$. The algorithm generates a c.e.~set of pairs $\{a_{j}^{n,b},r_{j}^{n,b}\}\subseteq A\times\mathbb{Q}_{>0}$
such that $W_{n}\cap U_{i(b,n)}=\bigcup_{j}B(a_{j}^{n,b},r_{j}^{n,b})$,
and the algorithm waits for $b$, $j$, and $k$ such that
\[
d_{\mathcal{X}}(a(k),a_{j}^{n,b})<r_{j}^{n,b}-2^{-k}.
\]
Such an event will always trigger if $x\in W_{n}\cap U_{i(b,n)}$
for some $b$.}

This algorithm will only produce an infinite sequence $g(a)$ if
$a$ is a Cauchy name for some $x\in\bigcap_{n}W_{n}=D$. Conversely,
assume $x\in D$. Then $x\in W_{n}$ for all $n\in\mathbb{N}$, and by
condition (2), for all $n\in\mathbb{N}$ there is some $b\in B$ such
that $x\in U_{i(b,n)}$. Hence $x\in W_{n}\cap U_{i(b,n)}$ for some such $b$. Therefore,
if $a$ is a Cauchy name for $x$, the algorithm will construct an
infinite sequence $g(a)$. Moreover, $g(a)$ is a Cauchy name for
$f(x)$, since if $b$ is the $n$th element of $g(a)$, then $f(x)\in V_{i(b,n)}=B(b,2^{-(n+1)})$
which implies $d_{\mathcal{X}}(b,f(x))<2^{-(n+1)}$.\end{proof}
\begin{rem}
The definition of partial computable function used in this paper (Definition~\ref{def:partial-comp-metric})
can be found in, for example, Hemmerling \cite{Hemmerling:2002aa}
and Miller \cite{Miller:2004ly}. Other authors, such as Weihrauch
\cite{Weihrauch2000} and Hoyrup and Rojas \cite{Hoyrup2009}, define
a partial computable function as a partial function $f\colon D\subseteq\mathcal{X}\rightarrow\mathcal{Y}$
such that item (2) of Proposition~\ref{prop:part-comp-char} holds.
In this case the domain $D$ may not be $\Pi_{2}^{0}$. It is important
for the results in this paper that the domain of a partial computable
function be $\Pi_{2}^{0}$. (Also in the less restrictive definition,
since any set $D$ could be a domain, there would be $2{}^{2^{\aleph_{0}}}$
many partial computable functions.)
\end{rem}

\begin{rem}
\label{rmk:Preimage}Let $f$, $(U_{i})$ and $(V_{i})$ be as in
Proposition~\ref{prop:part-comp-char}. Given a $\Sigma_{1}^{0}$
set $V=V_{i}\subseteq\mathcal{Y}$, then with a slight abuse of notation
define 
\[
f^{-1}(V)=U_{i}.
\]
Note, the exact value of $f^{-1}(V)$ depends on the codes of $V$
and $f$. Also, $f^{-1}(V)$ is not the true preimage of $U$, which
is $U_{i}\cap\dom f$. However, if $x\in\dom f$, then $x\in f^{-1}(V)$
if and only if $f(x)\in V$. Moreover, if $\dom f$ has measure one
(which it will in most of the paper), then $f^{-1}(V)$ is a.e.~equal
to the true preimage. 

Similarly, if $C\subseteq\mathcal{Y}$ is $\Pi_{1}^{0}$, define 
\[
f^{-1}(C):=\mathcal{X}\smallsetminus f^{-1}(C^{c}),
\]
and if $E\subseteq\mathcal{Y}$ is $\Sigma_{2}^{0}$, with $E=\bigcup_{i}C_{i}$
where $C_{i}$ is $\Pi_{1}^{0}$ in $i$, define 
\[
f^{-1}(E):=\bigcup_{i}f^{-1}(C_{i}).
\]
Again, $f^{-1}(E)$ is an slight abuse of notation. Its actual value
depends on the codes of $f$ and $E$. However, if $x\in\dom f$,
then $x\in f^{-1}(E)$ if and only if $f(x)\in E$. Also if $\dom f$
has measure one, then $f^{-1}(E)$ is a.e.~equal to the true preimage.
\end{rem}

\subsection{Almost-every computable maps, isomorphisms, and randomness}
\begin{defn}
Let $(\mathcal{X},\mu)$ and $(\mathcal{Y},\nu)$ be computable probability
spaces.
\begin{enumerate}
\item A partial map $T\colon(\mathcal{X},\mu)\rightarrow\mathcal{Y}$ is
said to be \noun{a.e.~computable} if it is partial computable with
a measure-one domain.
\item (Hoyrup and Rojas \cite{Hoyrup2009}) A partial map $T\colon(\mathcal{X},\mu)\rightarrow(\mathcal{Y},\nu)$
is said to be an \noun{a.e.\ computable morphism} (or \noun{morphism}
for short) if it is a.e.~computable and measure preserving, i.e.~$\mu(T^{-1}(A))=\nu(A)$
for all measurable $A\subseteq\mathcal{Y}$.
\item (Hoyrup and Rojas \cite{Hoyrup2009}) A pair of partial maps $T\colon(\mathcal{X},\mu)\rightarrow(\mathcal{Y},\nu)$
and $S\colon(\mathcal{Y},\nu)\rightarrow(\mathcal{X},\mu)$ are said
to be an \noun{a.e.\ computable isomorphism} (or \noun{isomorphism}
for short) if both maps are a.e.~computable morphisms such that $(S\circ T)(x)=x$
for $\mu$-a.e.~$x\in\mathcal{X}$ and $(T\circ S)(y)=y$ for $\nu$-a.e.~$y\in\mathcal{Y}$.
We also say $T\colon(\mathcal{X},\mu)\rightarrow(\mathcal{Y},\nu)$
is an isomorphism if such an $S$ exists.
\end{enumerate}
\end{defn}
Note that these definitions differ slightly from those of Hoyrup and
Rojas \cite{Hoyrup2009}, who implicitly require that the domain of
an almost-everywhere computable function also be dense. They also
call such functions ``almost computable.''

This next proposition says a.e.~computable maps are defined on Kurtz
randoms. Further, Kurtz randomness can be characterized by a.e.~computable
maps, and a.e.~computable maps are determined by their values on
Kurtz randoms. (For a different characterization of Kurtz randomness
using a.e.~computable functions, see Hertling and Wang \cite{Hertling:1997fk}.)
\begin{prop}
Let $(\mathcal{X},\mu)$ be a computable probability space and $\mathcal{Y}$
a computable Polish space. For $x\in\mathcal{X}$, $x$ is Kurtz random
if and only if it is in the domain of every a.e.~computable map $T\colon(\mathcal{X},\mu)\rightarrow\mathcal{Y}$.
Further, two a.e.\ computable maps are a.e.~equal if and only if
they agree on Kurtz randoms.\end{prop}
\begin{proof}
For the first part, if $x$ is Kurtz random, it avoids all null $\Sigma_{2}^{0}$
sets, and by Proposition~\ref{prop:part-comp-char} is in the domain
of every a.e.~computable map. Conversely, if $x$ is not Kurtz random,
it is in some null $\Sigma_{2}^{0}$ set $A$. Fix a computable $y_{0}\in\mathcal{Y}$ and let $T\colon(\mathcal{X},\mu)\rightarrow\mathcal{Y}$
be the partial map with domain $\mathcal{X}\smallsetminus A$ such
that $T(x)=y_{0}$ for $x\in\mathcal{X}\smallsetminus A$. By Proposition~\ref{prop:part-comp-char},
$T$ is a.e.~computable.

For the second part, let $T,S\colon(\mathcal{X},\mu)\rightarrow\mathcal{Y}$
be a.e.~computable maps that are a.e.~equal. The set 
\[
\{x\in\mathcal{X}\mid x\notin\dom T\cup\dom S\quad\text{or}\quad T(x)\neq S(x)\}
\]
 is a null $\Sigma_{2}^{0}$ set in $\mathcal{X}$. Conversely, if
$T(x)=S(x)$ for all Kurtz randoms $x$, then $T=S$ a.e.
\end{proof}

This next proposition shows that many common notions of randomness
are preserved by morphisms, and the set of randoms is preserved under
isomorphisms.
\begin{prop}
\label{prop:morphisms_preserve}If $T\colon(\mathcal{X},\mu)\rightarrow(\mathcal{Y},\nu)$
is a morphism and $x\in\mathcal{X}$ is Martin-Löf random, then $T(x)$
is Martin-Löf random. The same is true of Kurtz and Schnorr randomness.
Hence, if $T$ is an isomorphism, then $x$ is Martin-Löf (respectively
Kurtz, Schnorr) random if and only if $T(x)$ is.\end{prop}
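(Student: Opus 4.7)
The plan is to prove the contrapositive of each implication by pulling a test on $\mathcal{Y}$ back to a test on $\mathcal{X}$, using the preimage construction of Remark~\ref{rem:preimage}.

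For Martin-L\"of randomness, suppose $T(x)$ is not Martin-L\"of random and fix a Martin-L\"of test $(V_n)$ covering $T(x)$. Set $U_n = T^{-1}(V_n)$, which by Remark~\ref{rem:preimage} is a uniformly computable sequence of $\Sigma_1^0$ sets in $\mathcal{X}$ satisfying $\mu(U_n) = \nu(V_n) \leq 2^{-n}$. Since $x$ is Martin-L\"of random, it is in particular Kurtz random, so by Proposition~\ref{prop:partial-comp} (and the fact that $\dom(T)$ is a measure-one $\Pi_2^0$ set) we have $x \in \dom(T)$. Then $T(x) \in V_n$ forces $x \in U_n$ for every $n$, making $(U_n)$ a Martin-L\"of test that covers $x$ -- a contradiction. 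The Schnorr case is identical: the computability of $\nu(V_n)$ transfers to $\mu(U_n)$ via $\mu(U_n) = \nu(V_n)$, so $(U_n)$ becomes a Schnorr test.

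For Kurtz randomness, suppose $T(x)$ lies in a null $\Sigma_2^0$ set $A \subseteq \mathcal{Y}$. The preimage operation of Remark~\ref{rem:preimage}, applied to the $\Sigma_2^0$ point class, produces a $\Sigma_2^0$ set $T^{-1}(A) \subseteq \mathcal{X}$ with $\mu(T^{-1}(A)) = \nu(A) = 0$; as above $x \in \dom(T)$ forces $x \in T^{-1}(A)$, so $x$ is not Kurtz random.

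For the isomorphism statement, one direction is the morphism case applied to $T$. For the converse, let $S\colon(\mathcal{Y},\nu)\to(\mathcal{X},\mu)$ be the inverse morphism and assume $T(x)$ is random (Martin-L\"of, Schnorr, or Kurtz). By the morphism case, $S(T(x))$ is random of the same flavor. Since $S \circ T$ and the identity are both a.e.\ computable and agree $\mu$-a.e., they agree on every Kurtz random point by the corollary preceding Remark~\ref{rem:preimage}; in each of the three cases the random $x$ is Kurtz random, so $(S \circ T)(x) = x$, which then inherits the randomness of $S(T(x))$. The only genuine subtlety is verifying that preimages behave correctly on the right point class with the right measure, and that $(S \circ T)(x) = x$ on the specific random $x$ rather than merely almost everywhere; both issues are handled by the results of the preceding subsection.
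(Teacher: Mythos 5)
Your treatment of the morphism half is correct and is essentially the paper's own argument: pull each test on $(\mathcal{Y},\nu)$ back through the preimage operation of Remark~\ref{rem:preimage}, use $\mu(T^{-1}(V))=\nu(V)$ to keep the measure bounds, and use Kurtz randomness of $x$ to guarantee $x\in\dom(T)$ so that $T(x)\in V_n$ really does force $x\in T^{-1}(V_n)$. You are in fact more careful than the paper here, which compresses the Schnorr and Kurtz cases into one sentence.

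The isomorphism converse, however, contains a genuine circularity. You assume $T(x)$ is random, deduce that $S(T(x))$ is random by the morphism case, and then write that ``the random $x$ is Kurtz random, so $(S\circ T)(x)=x$.'' But whether $x$ is random (even Kurtz random) is exactly what you are trying to prove in this direction; the hypothesis only gives you randomness of $T(x)$, and nothing proved so far rules out a non-Kurtz-random $x$ with $T(x)$ random. The corollary you invoke says that $S\circ T$ and the identity agree \emph{on Kurtz random points}, so it lets you conclude $S(T(x))=x$ only after you already know $x$ avoids every null $\Sigma_2^0$ set. What your argument actually establishes is: if $T(x)$ is random, then $S(T(x))$ is a random point mapped by $T$ to $T(x)$ --- but identifying that point with $x$ requires the separate fact that an isomorphism \emph{reflects} Kurtz randomness (equivalently, that $x\in\dom(T)$ non-Kurtz-random implies $T(x)$ non-Kurtz-random). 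That fact does not follow from the morphism direction: the natural witness would be the forward image $T(C\cap\dom(T))$ of a null $\Pi_1^0$ set $C$ containing $x$, which is null but need not be $\Sigma_2^0$, and the pullback $S^{-1}(C)$ only catches $T(x)$ when $S(T(x))=x$, which is again the point at issue. (The paper itself asserts the ``if and only if'' without proof, so it offers no guidance here; but your proof as written does not close this gap, and it is the only nontrivial content of the final sentence of the proposition.) To repair it you would need either to prove directly that isomorphisms reflect Kurtz randomness, or to restrict the biconditional to points $x$ that are already known to be Kurtz random.
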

\begin{proof}
Assume $T(x)$ is not Martin-Löf random with respect to $(\mathcal{Y},\nu)$.
Then there is a Martin-Löf test $(U_{n})$ for $(\mathcal{Y},\nu)$
which covers $T(x)$. Let $V_{n}=T^{-1}(U_{n})$ for each $n$, using
the convention of Remark~\ref{rmk:Preimage}. Then $(V_{n})$ is
a Martin-Löf test in $(\mathcal{X},\mu)$ which covers $x$. Hence
$x$ is not Martin-Löf random with respect to $(\mathcal{X},\mu)$.

The proofs for Kurtz and Schnorr randomness follow similarly. The
inverse image of a null $\Sigma_{2}^{0}$ set is still a null $\Sigma_{2}^{0}$
set, and the inverse image of a Schnorr test is still a Schnorr test.
\end{proof}
(Bienvenu and Porter have pointed out to me the following partial
converse to Proposition~\ref{prop:morphisms_preserve}, which was
first proved by Shen---see \cite{BienvenuSubmitted}. If $T\colon(\mathcal{X},\mu)\rightarrow(\mathcal{Y},\nu)$
is a morphism and $y$ is Martin-Löf random with respect to $(\mathcal{Y},\nu)$,
then there is some $x$ that is Martin-Löf random with respect to
$(\mathcal{X},\mu)$ such that $T(x)=y$.)

In Corollary~\ref{cor:comp-rand-morphisms}, we will see that computable
randomness is not preserved by morphisms. However, just looking at
the previous proof gives a clue as to why. There is another criterion
to the tests for computable randomness besides complexity and measure,
namely the cell decompositions of the space. The ``inverse image''
of a cell decomposition may not be a cell decomposition.

However, if $T$ is an isomorphism the situation is much better. Indeed,
these next three propositions show a correspondence between isomorphisms
and cell decompositions. Recall two cell decompositions $\mathcal{A}$
and $\mathcal{B}$ of a computable probability space $(\mathcal{X},\mu)$
are almost-everywhere equal (written $\mathcal{A}=\mathcal{B}$ a.e.)\ if
$[\sigma]_{\mathcal{A}}=[\sigma]_{\mathcal{B}}$ a.e.\ for all $\sigma\in2^{<\omega}$,
and two isomorphisms are almost-everywhere equal if they are pointwise
a.e.\ equal.
\begin{prop}[Isomorphisms to cell decompositions]
\label{prop:isomorphism-to-representation} If $T\colon(\mathcal{X},\mu)\rightarrow(\mathcal{Y},\nu)$
is an isomorphism and $\mathcal{B}$ is a cell decomposition of $(\mathcal{Y},\nu)$,
then there exists a cell decomposition $\mathcal{A}$, which we denote
$T^{-1}(\mathcal{B})$, such that $\name_{\mathcal{A}}(x)=\name_{\mathcal{B}}(T(x))$
for $\mu$-a.e.~$x$. Moreover, $\mathcal{A}$ is a.e.\ unique (that
is, if $\mathcal{A}'$ is a cell decomposition satisfying $\name_{\mathcal{A}'}(x)=\name_{\mathcal{B}}(T(x))$
for $\mu$-a.e.~$x$, then $\mathcal{A}'=\mathcal{A}$\ $\mu$-a.e.), and $\mathcal{A}$ is given by
$[\sigma]_{\mathcal{A}}=T^{-1}([\sigma]_{\mathcal{B}})$ a.e. In particular,
every isomorphism $T\colon(\mathcal{X},\mu)\rightarrow(2^{\omega},\nu)$
induces a cell decomposition $\mathcal{A}$ such that $\name_{\mathcal{A}}(x)=T(x)$
for $\mu$-a.e.~$x$.\end{prop}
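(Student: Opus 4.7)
The plan is to set $[\sigma]_{\mathcal{A}} := T^{-1}([\sigma]_{\mathcal{B}})$, where the preimage is interpreted in the a.e.~decidable sense of Remark~\ref{rem:preimage}, and then to invoke Proposition~\ref{prop:net-to-representation} to conclude that $\mathcal{A} = \{[\sigma]_{\mathcal{A}}\}_{\sigma \in 2^{<\omega}}$ is a cell decomposition of $(\mathcal{X},\mu)$. By that remark the preimage of each a.e.~decidable set under the partial-computable map $T$ is again a.e.~decidable, uniformly in codes, so the family $\mathcal{A}$ is a uniformly computable sequence of a.e.~decidable sets.

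The first two conditions of Proposition~\ref{prop:net-to-representation} fall out easily: the Boolean identities $[\sigma 0]_{\mathcal{B}} \cap [\sigma 1]_{\mathcal{B}} = \varnothing$ and $[\sigma 0]_{\mathcal{B}} \cup [\sigma 1]_{\mathcal{B}} = [\sigma]_{\mathcal{B}}$ (a.e.) pull back through $T^{-1}$ up to the $\mu$-null complement of $\dom T$, while $\mu(T^{-1}([\varepsilon]_{\mathcal{B}})) = \nu([\varepsilon]_{\mathcal{B}}) = 1$ because $T$ is measure-preserving. The third condition is the heart of the argument and is where the assumption that $T$ is a full \emph{isomorphism} (not merely a morphism) is essential. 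Given a $\Sigma_{1}^{0}$ set $U \subseteq \mathcal{X}$, I take the inverse isomorphism $S\colon \mathcal{Y} \to \mathcal{X}$ and form $V := S^{-1}(U)$, a $\Sigma_{1}^{0}$ subset of $\mathcal{Y}$. Because $\mathcal{B}$ is a cell decomposition, Proposition~\ref{prop:decompose-open-set} supplies a c.e.~family $\{\sigma_{i}\}$ with $V = \bigcup_{i}[\sigma_{i}]_{\mathcal{B}}$ a.e., and applying $T^{-1}$ yields $T^{-1}(V) = \bigcup_{i}[\sigma_{i}]_{\mathcal{A}}$ a.e. Finally, on the co-null set where $S \circ T = \mathrm{id}$ and both preimages behave correctly, $x \in T^{-1}(V)$ iff $S(T(x)) \in U$ iff $x \in U$, so $T^{-1}(V) = U$ a.e. The main obstacle is precisely this step; without the inverse $S$ one could not convert a general $\Sigma_{1}^{0}$ set back into a union of $\mathcal{A}$-cells, which is why the analogous result fails for bare morphisms.

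It remains to verify the name equation and the a.e.~uniqueness. By construction $x \in [\sigma]_{\mathcal{A}}$ iff $T(x) \in [\sigma]_{\mathcal{B}}$ for every $x$ in the co-null set where the preimages and $T$ are well-behaved, so $x \upharpoonright_{\mathcal{A}} n = T(x) \upharpoonright_{\mathcal{B}} n$ for every $n$, and hence $\name_{\mathcal{A}}(x) = \name_{\mathcal{B}}(T(x))$ for $\mu$-a.e.~$x$. For uniqueness, observe that the cells of any decomposition $\mathcal{A}'$ are recovered a.e.\ from the name function via $[\sigma]_{\mathcal{A}'} = \{x : \name_{\mathcal{A}'}(x) \succeq \sigma\}$ a.e.; so if a second cell decomposition $\mathcal{A}'$ satisfies $\name_{\mathcal{A}'}(x) = \name_{\mathcal{B}}(T(x))$ a.e., then $[\sigma]_{\mathcal{A}'} = [\sigma]_{\mathcal{A}}$ a.e.\ for every $\sigma \in 2^{<\omega}$. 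The ``in particular'' clause is then the special case $\mathcal{Y} = 2^{\omega}$ with $\mathcal{B}$ the natural cell decomposition, for which $\name_{\mathcal{B}}(y) = y$ a.e.
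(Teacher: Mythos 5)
Your proposal is correct and follows essentially the same route as the paper's own proof: define $[\sigma]_{\mathcal{A}}=T^{-1}([\sigma]_{\mathcal{B}})$ via Remark~\ref{rem:preimage}, verify the hypotheses of Proposition~\ref{prop:net-to-representation} with the key third condition handled by pulling $U$ through the inverse isomorphism $S$ and decomposing $S^{-1}(U)$ into $\mathcal{B}$-cells via Proposition~\ref{prop:decompose-open-set}, and then read off the name equation and a.e.~uniqueness. No substantive differences.
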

\begin{proof}
We will show $[\sigma]_{\mathcal{A}}=T^{-1}([\sigma]_{\mathcal{B}})$
defines a cell decomposition $\mathcal{A}$. Using the convention
of Remark~\ref{rmk:Preimage}, $T^{-1}([\sigma]_{\mathcal{B}})$
is $\Sigma_{1}^{0}$ in $\sigma$. Clearly, $[\varepsilon]_{\mathcal{A}}=\mathcal{X}$\ $\mu$-a.e.,
$[\sigma0]_{\mathcal{A}}\cap[\sigma1]_{\mathcal{A}}=\varnothing$\ $\mu$-a.e.,
and $[\sigma0]_{\mathcal{A}}\cup[\sigma1]_{\mathcal{A}}=[\sigma]_{\mathcal{A}}$\ $\mu$-a.e.
Finally, take a $\Sigma_{1}^{0}$ set $U\subseteq\mathcal{X}$. By
Proposition~\ref{prop:net-to-representation}, it is enough to show
there is some c.e.\ set $\{\sigma_{i}\}$ (c.e.\ in the code for
$U$) such that $U=\bigcup_{i}[\sigma_{i}]_{\mathcal{A}}$ $\mu$-a.e.
Let $S$ be the inverse isomorphism to $T$. Then define $V=S^{-1}(U)$
using the convention of Remark~\ref{rmk:Preimage}. Then $V\subseteq\mathcal{Y}$
is $\Sigma_{1}^{0}$ (uniformly in the code for $U$) and $T^{-1}(V)=U$
$\mu$-a.e. By Proposition~\ref{prop:decompose-open-set} there is
some c.e.\ set $\{\sigma_{i}\}$ (c.e.\ in the code for $V$) such
that $V=\bigcup_{i}[\sigma_{i}]_{\mathcal{B}}$ $\nu$-a.e.\ and
therefore $U=T^{-1}(V)=\bigcup_{i}T^{-1}([\sigma_{i}]_{\mathcal{B}})=\bigcup_{i}[\sigma_{i}]_{\mathcal{A}}$
$\mu$-a.e. Therefore, $[\sigma]_{\mathcal{A}}=T^{-1}([\sigma]_{\mathcal{B}})$
defines a cell decomposition $\mathcal{A}$.

For $\mu$-a.e.\ $x$, $x\in\dom(T)\cap\dom(\name_{\mathcal{A}})$.
Then for all $n$, $x\in[x\upharpoonright_{\mathcal{A}}n]_{\mathcal{A}}=T^{-1}([x\upharpoonright_{\mathcal{A}}n]_{\mathcal{B}})$.
By Remark~\ref{rmk:Preimage}, $T(x)\in[x\upharpoonright_{\mathcal{A}}n]_{\mathcal{B}}$.
Therefore $\name_{\mathcal{B}}(T(x))=\name_{\mathcal{A}}(x)$. 

For $\mathcal{Y}=2^{\omega}$, let $\mathcal{B}$ be the natural cell
decomposition of $(2^{\omega},\nu)$, then $[\sigma]_{\mathcal{B}}=[\sigma]^{\prec}$
for all $\sigma\in2^{<\omega}$. Therefore for $\mu$-a.e.\ $x$,
$\name_{\mathcal{A}}(x)=\name_{\mathcal{B}}(T(x))=T(x)$.

To show the cell decomposition $\mathcal{A}$ is unique, assume $\mathcal{A}'$
is another cell decomposition such that for $\mu$-a.e.\ $x$, the
$\mathcal{A}$-name and $\mathcal{A}'$-name of $x$ are both the
$\mathcal{B}$ name of $T(x)$. Then $[\sigma]_{\mathcal{A}}=[\sigma]_{\mathcal{A}'}$
$\mu$-a.e. for all $\sigma\in2^{<\omega}$.\end{proof}
\begin{prop}[Cell decompositions to isomorphisms]
\label{prop:representation-to-isomorphism}Let $(\mathcal{X},\mu)$
be a computable probability space with cell decomposition $\mathcal{A}$.
There is a unique computable probability space $(2^{\omega},\mu_{\mathcal{A}})$
such that $\name_{\mathcal{A}}\colon(\mathcal{X},\mu)\rightarrow(2^{\omega},\mu_{\mathcal{A}})$
is an isomorphism. Namely, $\mu_{\mathcal{A}}(\sigma)=\mu([\sigma]_{\mathcal{A}})$.\end{prop}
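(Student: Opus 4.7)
The plan is to prove four things: $\mu_{\mathcal{A}}$ is a computable probability measure; $\name_{\mathcal{A}}$ is an a.e.~computable morphism; an a.e.~computable inverse morphism exists; and $\mu_{\mathcal{A}}$ is uniquely determined. The main obstacle will be constructing the inverse, because the cells of $\mathcal{A}$ need not have diameters tending to zero in the metric of $\mathcal{X}$.

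First I would verify that $\mu_{\mathcal{A}}(\sigma) := \mu([\sigma]_{\mathcal{A}})$ is a computable probability measure on $2^{\omega}$. The fairness identity $\mu_{\mathcal{A}}(\sigma 0) + \mu_{\mathcal{A}}(\sigma 1) = \mu_{\mathcal{A}}(\sigma)$ and the total mass $\mu_{\mathcal{A}}(\varepsilon) = 1$ are immediate from the cell decomposition axioms of Proposition~\ref{prop:net-to-representation}. For uniform computability, each cell $[\sigma]_{\mathcal{A}}$ is coded by an a.e.~decidable pair $(U, V)$ with $\mu(U) + \mu(V) = 1$; since both $\mu(U)$ and $\mu(V)$ are lower semicomputable uniformly in $\sigma$ and sum to the computable value $1$, each is also upper semicomputable, hence computable. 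Uniqueness is then immediate: if $\name_{\mathcal{A}} \colon (\mathcal{X}, \mu) \to (2^{\omega}, \nu)$ is a measure-preserving isomorphism, then $\nu(\sigma) = \nu([\sigma]^{\prec}) = \mu(\name_{\mathcal{A}}^{-1}([\sigma]^{\prec})) = \mu([\sigma]_{\mathcal{A}})$, using that $\name_{\mathcal{A}}^{-1}([\sigma]^{\prec}) = [\sigma]_{\mathcal{A}}$ $\mu$-a.e.

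Next I would check $\name_{\mathcal{A}}$ is an a.e.~computable morphism. Writing $(U_{i}, V_{i})$ for the a.e.~decidable pair coding $A_{i}$, the domain of $\name_{\mathcal{A}}$ is $\bigcap_{i}(U_{i} \cup V_{i})$, a $\Pi_{2}^{0}$ set of full $\mu$-measure (its complement is the set of unrepresented points, which by the remark after Definition~\ref{def:A-names} is a null $\Sigma_{2}^{0}$ set). Given a Cauchy name for $x$ in this domain, one computably enumerates the bits of $\name_{\mathcal{A}}(x)$ by waiting for $x$ to enter $U_{i}$ or $V_{i}$ for each $i$, so Proposition~\ref{prop:partial-comp} gives partial computability. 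Measure preservation holds on cylinders by the identity above, and extends uniquely to all Borel sets.

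The main obstacle is constructing an a.e.~computable inverse $S \colon (2^{\omega}, \mu_{\mathcal{A}}) \to (\mathcal{X}, \mu)$. The strategy is to build a Cauchy name for $S(\sigma)$ out of the simple points of $\mathcal{X}$. For each simple point $a$ and each precision $2^{-m}$, the diagonalization underlying Theorem~\ref{thm:a.e.- decidable-repr} yields rationals $r_{0} < r_{1} < \cdots < r_{N}$ with $r_{i} - r_{i-1} < 2^{-m}$ such that each ball $B(a, r_{i})$ is $\mu$-a.e.~decidable. By Proposition~\ref{prop:decompose-open-set}, each such ball is (mod $\mu$-null) a c.e.~prefix-free union of cells $[\sigma_{i,j}]_{\mathcal{A}}$, and its $\mu$-a.e.~decidable complement is a similar c.e.~union. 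Translating to $2^{\omega}$, for $\mu_{\mathcal{A}}$-a.e.~$\sigma$ the string $\sigma$ extends a prefix from exactly one of these two unions, deciding whether $S(\sigma) \in B(a, r_{i})$; combining these decisions over $i$ brackets $d(S(\sigma), a)$ to within $2^{-m}$. Doing this uniformly in $a$ and $m$ produces a computable Cauchy name for a point $S(\sigma) \in \mathcal{X}$ on a co-null $\Pi_{2}^{0}$ set of $\sigma$. These ball-membership decisions are precisely those that would hold for any $x$ with $\name_{\mathcal{A}}(x) = \sigma$, so $\name_{\mathcal{A}}(S(\sigma)) = \sigma$ for $\mu_{\mathcal{A}}$-a.e.~$\sigma$ and $S(\name_{\mathcal{A}}(x)) = x$ for $\mu$-a.e.~$x$, completing the isomorphism.
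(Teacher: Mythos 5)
Your proposal is correct in outline and matches the paper on the routine parts (computability of $\mu_{\mathcal{A}}$, uniqueness via measure preservation on cylinders, a.e.~computability of $\name_{\mathcal{A}}$ by waiting for $x$ to enter a length-$n$ cell), but your construction of the inverse takes a genuinely different route. The paper builds $S(y)$ by a back-and-forth nested refinement: decompose the $\Sigma_{1}^{0}$ set $[\tau]_{\mathcal{A}}$ into small basic balls, decompose each ball back into cells, find a cell whose index is a prefix of $y$, emit that ball's center as the next Cauchy approximation, and recurse with shrinking radii. You instead recover $S(y)$ by computing its distance to every simple point: you use the fact (from the diagonalization behind Theorem~\ref{thm:a.e.- decidable-repr}) that each simple point admits arbitrarily fine grids of a.e.~decidable radii, decide membership of $S(y)$ in each such ball by testing whether $y$ extends a prefix from the cell-union for the ball or for its a.e.~decidable complement, and assemble a Cauchy name from the resulting distance brackets. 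Both work; the paper's version needs only Proposition~\ref{prop:decompose-open-set} applied to arbitrary $\Sigma_{1}^{0}$ sets (the covering balls need not themselves be a.e.~decidable), while yours leans harder on the existence of a.e.~decidable balls of every scale but yields the conceptually cleaner statement that the distance function $a\mapsto d(S(y),a)$ is uniformly computable from $y$, which is equivalent to having a Cauchy name.

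Two small points you should tighten. First, you never verify that $S$ is measure preserving, which the definition of isomorphism requires; this is one line, as in the paper: $S^{-1}([\sigma]_{\mathcal{A}})=S^{-1}(\name_{\mathcal{A}}^{-1}([\sigma]^{\prec}))=[\sigma]^{\prec}$ $\mu_{\mathcal{A}}$-a.e., and the cells generate the sigma-algebra mod null sets. Second, your final sentence deduces $\name_{\mathcal{A}}(S(\sigma))=\sigma$ a.e.\ a bit too quickly from the ball-membership decisions; the clean argument is indirect: once $S\circ\name_{\mathcal{A}}=\mathrm{id}$ $\mu$-a.e.\ is established, the set $B=\{y\mid\name_{\mathcal{A}}(S(y))\neq y\}$ satisfies $\mu(\name_{\mathcal{A}}^{-1}(B))=0$, so $\mu_{\mathcal{A}}(B)=0$ by measure preservation of $\name_{\mathcal{A}}$. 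Neither issue is fatal, but both belong in a complete write-up.
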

\begin{proof}
If such a measure $\mu_{\mathcal{A}}$ exists, it must be unique.
Indeed, since $\name_{\mathcal{A}}$ is then measure-preserving, $\mu_{\mathcal{A}}$
must satisfy $\mu_{\mathcal{A}}(\sigma)=\mu(\name_{\mathcal{A}}^{-1}([\sigma]^{\prec}))=\mu([\sigma]_{\mathcal{A}})$,
which uniquely defines $\mu_{\mathcal{A}}$.

It remains to show the map $\name_{\mathcal{A}}\colon(\mathcal{X},\mu)\rightarrow(2^{\omega},\mu_{\mathcal{A}})$
which maps $x$ to its $\mathcal{A}$-name is an isomorphism. Clearly,
$\mu_{\mathcal{A}}$ is a computable measure since $\mu([\sigma]_{\mathcal{A}})$
is computable uniformly from $\sigma$. The map $\name_{\mathcal{A}}$
which takes $x$ to its $\mathcal{A}$-name is measure preserving
for cylinder sets and therefore for all sets by approximation. The
map from $x$ to $x\upharpoonright_{\mathcal{A}}n$ is a.e.~computable.
Indeed, wait for $x$ to show up in one of the sets $[\sigma]_{\mathcal{A}}$
where $|\sigma|=n$. Hence the map from $x$ to its $\mathcal{A}$-name
is also a.e.~computable. So $\name_{\mathcal{A}}$ is a morphism.
(As an extra verification, clearly $\dom(\name_{\mathcal{A}})$ is
a $\Pi_{2}^{0}$ measure-one set.) 

The inverse of $\name_{\mathcal{A}}$ will be the map $S$ from (a
measure-one set of) $\mathcal{A}$-names $y\in2^{\omega}$ to points
$x\in\mathcal{X}$ such that $\name_{\mathcal{A}}(x)=y$. The algorithm
for $S$ will be similar to the algorithm given by the proof of the
Baire category theorem (see \cite{Brattka2001}). Pick $y\in2^{\omega}$.
We compute $S(y)$ by a back-and-forth argument. Assume $\tau\prec y$.
Recall, $[\tau]_{\mathcal{A}}$ is $\Sigma_{1}^{0}$ in $\tau$. We
can enumerate a sequence of pairs $(a_{i},k_{i})$ where each $a_{i}$
is a simple point of $\mathcal{X}$ and each $k_{i}>|\tau|$ such
that $[\tau]_{\mathcal{A}}=\bigcup_{i}B(a_{i},2^{-k_{i}})$. Further,
by Proposition~\ref{prop:decompose-open-set}, we have that for each
$i$, there is a c.e.\ set $\{\sigma_{j}^{i}\}$ (c.e.\ in $i$)
such that $B(a_{i},2^{-k_{i}})=\bigcup_{j}[\sigma_{j}^{i}]_{\mathcal{A}}$
$\mu$-a.e. (We may assume $|\sigma_{j}^{i}|>|\tau|$ for all $i,j$.)
Given $y$, compute the Cauchy-name of $S(y)$ as follows. Start with
$\tau_{1}=y\upharpoonright1$. Then search for $\sigma_{j}^{i}\prec y$.
If we find one, let $b_{1}=a_{i}$ be the first approximation. Now
continue with $\tau_{2}=\sigma_{j}^{i}$, and so on. This gives a
Cauchy-name $(b_{n})$. The algorithm will fail if at some stage it
cannot find any $\sigma_{j}^{i}\prec y$. But then $y\in[\tau]^{\prec}\smallsetminus\bigcup_{i}\bigcup_{j}[\sigma_{i}^{j}]^{\prec}$,
which by the definition of $\mu_{\mathcal{A}}$, is a $\mu_{\mathcal{A}}$-measure-zero
set since $[\tau]_{\mathcal{A}}=\bigcup_{i}\bigcup_{j}[\sigma_{i}^{j}]_{\mathcal{A}}$
$\mu$-a.e. Hence $S$ is a.e.~computable.

By the back-and-forth algorithm, $\name_{\mathcal{A}}(S(y))=y$ for
all $y\in\dom(S)$. To show $S(\name_{\mathcal{A}}(x))=x$ a.e.,\ assume
$x\in\dom(\name_{\mathcal{A}})$. Consider the back-and-forth sequence
created by the algorithm: $[\tau_{n}]_{\mathcal{A}}\supseteq B(b_{n},2^{-k_{n}})\supseteq[\tau_{n+1}]_{\mathcal{A}}\supseteq\ldots$.
For all $n$, we have $\tau_{n}\prec\name_{\mathcal{A}}(x)$, then
$x\in[\tau_{n}]_{\mathcal{A}}$ for all $n$. So $x=\lim_{n\rightarrow\infty}b_{n}=S(\name_{\mathcal{A}}(x))$.
Since $S^{-1}([\sigma]_{\mathcal{A}})=S^{-1}(\name_{\mathcal{A}}^{-1}([\sigma]^{\prec}))=[\sigma]^{\prec}$
$\mu_{\mathcal{A}}$-a.e., $S$ is a measure-preserving map, and hence
a morphism. Therefore, $\name_{\mathcal{A}}$ is an isomorphism.
\end{proof}
These last two propositions show that there is a one-to-one correspondence
between cell decompositions $\mathcal{A}$ of a space $(\mathcal{X},\mu)$
and isomorphisms of the form $T\colon(\mathcal{X},\mu)\rightarrow(2^{\omega},\nu)$.
This next proposition shows a further one-to-one correspondence between
isomorphisms $T\colon(\mathcal{X},\mu)\rightarrow(\mathcal{Y},\nu)$
and $S\colon(2^{\omega},\mu_{\mathcal{A}})\rightarrow(2^{\omega},\nu_{\mathcal{B}})$.
\begin{prop}[Pairs of cell decompositions to isomorphisms]
\label{prop:representation-pairs-to-isomorphisms}Let $(\mathcal{X},\mu)$
and $(\mathcal{Y},\nu)$ be computable probability spaces with cell
decompositions $\mathcal{A}$ and $\mathcal{B}$. Let $\mu_{\mathcal{A}}$
be as in Proposition~\ref{prop:representation-to-isomorphism}, and
similarly for $\nu_{\mathcal{B}}$. Then for every isomorphism $T\colon(\mathcal{X},\mu)\rightarrow(\mathcal{Y},\nu)$
there is an a.e.\ unique isomorphism $S\colon(2^{\omega},\mu_{\mathcal{A}})\rightarrow(2^{\omega},\nu_{\mathcal{B}})$
and vice versa, such that $S$ maps $\name_{\mathcal{A}}(x)$ to $\name_{\mathcal{B}}(T(x))$
for $\mu$-a.e.~$x\in\mathcal{X}$. In other words the following
diagram commutes for $\mu$-a.e.~$x\in\mathcal{X}$. \[
\begin{tikzcd}[column sep=large] 
    (\mathcal{X},\mu)
      \arrow{r}{\name_{\mathcal{A}}} 
      \arrow{d}{T}
  & (2^\omega,\mu_{\mathcal{A}}) 
      \arrow{d}{S} \\
    (\mathcal{Y},\nu) 
      \arrow{r}{\name_{\mathcal{B}}}
  & (2^\omega,\nu_{\mathcal{B}})
\end{tikzcd}
\]Further we have the following.
\begin{enumerate}
\item If $(\mathcal{X},\mu)$ equals $(\mathcal{Y},\nu)$, then $T$ is
the identity isomorphism precisely when $S$ is the isomorphism which
maps $\name_{\mathcal{A}}(x)$ to $\name_{\mathcal{B}}(x)$. 
\item Conversely, $S$ is the identity isomorphism (and hence $\mu_{\mathcal{A}}$
equals $\nu_{\mathcal{B}}$) precisely when $\mathcal{A}=T^{-1}(\mathcal{B})$
(as in Proposition~\emph{\ref{prop:isomorphism-to-representation}}).
\end{enumerate}
\end{prop}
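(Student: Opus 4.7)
The plan is to prove the proposition by treating $S$ and $T$ as composites of already established isomorphisms, so that essentially no work is needed beyond diagram-chasing and invoking the previous two propositions.

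First I would define, given $T\colon(\mathcal{X},\mu)\rightarrow(\mathcal{Y},\nu)$, the map $S := \name_{\mathcal{B}} \circ T \circ \name_{\mathcal{A}}^{-1}$, where $\name_{\mathcal{A}}^{-1}$ denotes the inverse isomorphism produced by Proposition~\ref{prop:representation-to-isomorphism}. By that proposition, $\name_{\mathcal{A}}\colon(\mathcal{X},\mu)\rightarrow(2^{\omega},\mu_{\mathcal{A}})$ and $\name_{\mathcal{B}}\colon(\mathcal{Y},\nu)\rightarrow(2^{\omega},\nu_{\mathcal{B}})$ are isomorphisms, and $T$ is an isomorphism by assumption, so $S$ is a composition of three isomorphisms and is therefore itself an isomorphism. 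Commutativity of the diagram for $\mu$-a.e.\ $x\in\mathcal{X}$ is then immediate from the definition of $S$: on the measure-one set where $x\in\dom(\name_{\mathcal{A}})$, $T(x)\in\dom(\name_{\mathcal{B}})$, and $\name_{\mathcal{A}}^{-1}(\name_{\mathcal{A}}(x))=x$, one has $S(\name_{\mathcal{A}}(x))=\name_{\mathcal{B}}(T(x))$. For the reverse correspondence, given $S$ I would define $T := \name_{\mathcal{B}}^{-1} \circ S \circ \name_{\mathcal{A}}$; the same reasoning yields an isomorphism making the diagram commute.

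For a.e.\ uniqueness, suppose $S'$ is another isomorphism making the diagram commute. Then $S$ and $S'$ agree on the image $\name_{\mathcal{A}}(\dom(\name_{\mathcal{A}}))$, which is a $\mu_{\mathcal{A}}$-measure-one subset of $2^{\omega}$ because $\name_{\mathcal{A}}$ is measure-preserving. Hence $S=S'$ $\mu_{\mathcal{A}}$-a.e., and the two constructions are mutually inverse up to a.e.\ equality. The main subtlety here, and the closest thing to an obstacle, is just tracking the measure-zero exceptional sets across compositions, but since each of $T$, $\name_{\mathcal{A}}$, $\name_{\mathcal{B}}$ is defined off a null set and all the maps preserve measure, the composition is defined off a null set as well.

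It remains to verify conditions (1) and (2). For (1), if $T$ is the identity on $(\mathcal{X},\mu)=(\mathcal{Y},\nu)$ then $S = \name_{\mathcal{B}} \circ \name_{\mathcal{A}}^{-1}$, which by definition sends $\name_{\mathcal{A}}(x)$ to $\name_{\mathcal{B}}(x)$ for $\mu$-a.e.\ $x$, and the converse follows from a.e.\ uniqueness. For (2), if $S$ is the identity then $\mu_{\mathcal{A}} = \nu_{\mathcal{B}}$ because $S$ is measure-preserving, and the commuting diagram reads $\name_{\mathcal{A}}(x) = \name_{\mathcal{B}}(T(x))$ for $\mu$-a.e.\ $x$. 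By the a.e.\ uniqueness clause of Proposition~\ref{prop:isomorphism-to-representation}, the cell decomposition $T^{-1}(\mathcal{B})$ is the unique one (up to a.e.\ equality) satisfying this identity, so $\mathcal{A}=T^{-1}(\mathcal{B})$ a.e. The converse direction uses the same uniqueness: if $\mathcal{A}=T^{-1}(\mathcal{B})$ a.e., then $\name_{\mathcal{A}}(x) = \name_{\mathcal{B}}(T(x))$ a.e., which forces $S$ to be the identity a.e.\ and in particular forces $\mu_{\mathcal{A}}=\nu_{\mathcal{B}}$.
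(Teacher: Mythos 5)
Your proposal is correct and follows essentially the same route as the paper: define $S$ by conjugating $T$ with the isomorphisms $\name_{\mathcal{A}}$ and $\name_{\mathcal{B}}$ from Proposition~\ref{prop:representation-to-isomorphism}, get the converse direction symmetrically, and deduce (1) and (2) by a.e.\ uniqueness together with the uniqueness clause of Proposition~\ref{prop:isomorphism-to-representation}. You simply spell out the diagram chase and the bookkeeping of null exceptional sets that the paper leaves implicit (and you write the composition in the correct order, $\name_{\mathcal{B}}\circ T\circ\name_{\mathcal{A}}^{-1}$).
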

\begin{proof}
Given $T$, let $S=\name_{\mathcal{A}}^{-1}\circ T\circ\name_{\mathcal{B}}$,
and similarly to get $T$ from $S$. Then the diagram clearly commutes.
A.e.~uniqueness follows since the maps are isomorphisms. 

If $\mathcal{A}=T^{-1}(\mathcal{B})$ is induced by $T$, then by
Proposition~\ref{prop:isomorphism-to-representation}, $\name_{\mathcal{A}}(x)=\name_{\mathcal{B}}(T(x))$
which makes $S$ the identity map. But since $S$ is an isomorphism,
$\mu_{\mathcal{A}}$ and $\nu_{\mathcal{B}}$ must be the same measure. 

The rest follows from ``diagram chasing.''
\end{proof}
Now we can show computable randomness is preserved by isomorphisms.
\begin{thm}
\label{thm:isomorphisms_preserve}Isomorphisms preserve computable
randomness. Namely, given an isomorphism $T\colon(\mathcal{X},\mu)\rightarrow(\mathcal{Y},\nu)$,
then $x\in\mathcal{X}$ is computably random if and only if $T(x)$
is.\end{thm}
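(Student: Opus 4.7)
The plan is to reduce to the cell-decomposition-invariant formulation of computable randomness (Theorem~\ref{thm:indepentant-of-representation}) via the isomorphism-to-cell-decomposition correspondence supplied by Proposition~\ref{prop:isomorphism-to-representation}.

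First I would dispose of the pathological case. If $x$ fails to be Kurtz random, Proposition~\ref{prop:morphisms_preserve} gives that $T(x)$ is not Kurtz random either, and by Proposition~\ref{prop:computable-implies-schnorr} neither point is computably random; symmetrically, applying the inverse of $T$, if $T(x)$ fails to be Kurtz random then so does $x$. So I may assume both $x$ and $T(x)$ are Kurtz random. In particular, both points lie in the domains of all the a.e.~computable maps considered below and avoid every null $\Sigma_2^0$ set.

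Next, fix any cell decomposition $\mathcal{B}$ of $(\mathcal{Y},\nu)$ and let $\mathcal{A} = T^{-1}(\mathcal{B})$ be the induced cell decomposition of $(\mathcal{X},\mu)$ given by Proposition~\ref{prop:isomorphism-to-representation}, so that $[\sigma]_\mathcal{A} = T^{-1}([\sigma]_\mathcal{B})$. Because $T$ is measure-preserving,
\[
\mu([\sigma]_\mathcal{A}) = \nu([\sigma]_\mathcal{B}) \qquad \text{for every } \sigma \in 2^{<\omega}.
\]
The set of points where $\name_\mathcal{A}$ disagrees with $\name_\mathcal{B} \circ T$ is a null $\Sigma_2^0$ set, so the two names coincide at our Kurtz random $x$; that is, $x\upharpoonright_\mathcal{A} n = T(x)\upharpoonright_\mathcal{B} n$ for every $n$. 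Neither point sits in a null cell of its respective decomposition, again by Kurtz randomness.

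Finally, I would invoke the martingale test (Theorem~\ref{thm:comp_rand_defs}(\ref{enu:mart})). The fairness and impossibility conditions on a function $M\colon 2^{<\omega} \to [0,\infty)$ depend only on the numbers $\mu([\sigma]_\mathcal{A})$, which by the measure equality above agree with $\nu([\sigma]_\mathcal{B})$; hence the very same $M$ is a computable $\mathcal{A}$-martingale on $(\mathcal{X},\mu)$ if and only if it is a computable $\mathcal{B}$-martingale on $(\mathcal{Y},\nu)$. Since $x\upharpoonright_\mathcal{A} n = T(x)\upharpoonright_\mathcal{B} n$, such an $M$ succeeds on $x$ along $\mathcal{A}$ exactly when it succeeds on $T(x)$ along $\mathcal{B}$, and Theorem~\ref{thm:indepentant-of-representation} lets us read off computable randomness from any single cell decomposition on each side. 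The only nontrivial step is verifying that $T^{-1}(\mathcal{B})$ is indeed a cell decomposition with matching measures, which is precisely the content of Proposition~\ref{prop:isomorphism-to-representation} combined with Remark~\ref{rem:preimage}; after that, the argument is essentially bookkeeping.
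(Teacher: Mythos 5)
Your proof is correct and follows the same basic architecture as the paper's: both dispose of non-Kurtz-random points first, both invoke Proposition~\ref{prop:isomorphism-to-representation} to obtain the induced cell decomposition $\mathcal{A}=T^{-1}(\mathcal{B})$ with $\mu([\sigma]_{\mathcal{A}})=\nu([\sigma]_{\mathcal{B}})$, and both then transfer a single test across that correspondence, relying on Theorem~\ref{thm:indepentant-of-representation} to make the choice of decomposition immaterial. The only difference is which of the equivalent tests from Theorem~\ref{thm:comp_rand_defs} gets transferred: the paper pulls back a bounded Martin-L\"of test, setting $V_{n}=T^{-1}(U_{n})$ and checking the bounding inequality $\mu(V_{n}\cap[\sigma]_{\mathcal{A}})=\nu(U_{n}\cap[\sigma]_{\mathcal{B}})\leq2^{-n}\kappa(\sigma)$, whereas you observe that the martingale test is stated purely in terms of the data $\bigl(\sigma\mapsto\mu([\sigma]_{\mathcal{A}}),\ x\upharpoonright_{\mathcal{A}}n\bigr)$, which coincide on the two sides, so literally the same $M$ works and nothing needs to be pulled back. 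That makes your transfer step slightly more economical, but it is a cosmetic rather than a structural difference.
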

\begin{proof}
Assume $T(x)$ is not computably random. Fix an isomorphism $T\colon(\mathcal{X},\mu)\rightarrow(\mathcal{Y},\nu)$.
Let $\mathcal{B}$ be a cell decomposition of $(\mathcal{Y},\nu)$.
Take a bounded Martin-Löf test $(U_{n})$ on $(\mathcal{Y},\nu)$
with bounding measure $\kappa$ with respect to $\mathcal{\mathcal{B}}$
which covers $T(x)$. By Proposition~\ref{prop:isomorphism-to-representation}
there is a cell decomposition $\mathcal{A}=T^{-1}(\mathcal{B})$ on
$(\mathcal{X},\mu)$ such that $[\sigma]_{\mathcal{A}}=T^{-1}([\sigma]_{\mathcal{B}})$
for all $\sigma\in2^{<\omega}$. Define $V_{n}=T^{-1}(U_{n})$, using the convention of Remark~\ref{rmk:Preimage}.
Then $(V_{n})$ is a bounded Martin-Löf test on $(\mathcal{X},\mu)$
bounded by the same measure $\kappa$ with respect to $\mathcal{A}$.
Indeed,
\[
\mu(V_{n}\cap[\sigma]_{\mathcal{A}})=\nu(U_{n}\cap[\sigma]_{\mathcal{B}})\leq2^{-n}\kappa(\sigma).
\]
Also, $(V_{n})$ covers $x$, hence $x$ is not computably random.
\end{proof}
Using Theorem~\ref{thm:isomorphisms_preserve}, we can explore computable
randomness with respect to various spaces.
\begin{example}[Computably random vectors]
Let $([0,1]^{d},\lambda)$ be the cube $[0,1]^{d}$ with the Lebesgue
measure. The following is a natural isomorphism from $([0,1]^{d},\lambda)$
to $(2^{\omega},\lambda)$. First, represent $(x_{1},\ldots,x_{d})\in[0,1]^{d}$
by the binary sequence of each component; then interleave the binary
sequences. By Theorem~\ref{thm:isomorphisms_preserve}, $(x_{1},\ldots,x_{d})$
is computably random if and only if the sequence of interleaved binary
sequences is computably random. (This definition of computable randomness
with respect to $[0,1]^{d}$ was proposed by Brattka, Miller and Nies\ \cite{Brattka2011}.)
\end{example}

\begin{example}[Base invariance]
\label{ex:Base-inv-2}Let $\lambda_{3}$ be the uniform measure on
$3^{\omega}$. Consider the natural isomorphism $T_{2,3}\colon(2^{\omega},\lambda)\rightarrow(3^{\omega},\lambda_{3})$
which identifies the binary and ternary expansions of a real. This
is an a.e.~computable isomorphism, so $x\in[0,1]$ is computably
random if and only if $T_{2,3}(x)$ is computably random. We say a
randomness notion (defined on $(b^{\omega},\lambda_{b})$ for all
$b\geq2$, see Example~\ref{ex:comp-rand-on-3-omega-1}) is \noun{base invariant}
if this property holds for all base pairs $b_{1},b_{2}$.
\end{example}

\begin{example}[Computably random closed set]
Consider the space $\mathcal{F}(2^{\omega})$ of closed sets of $2^{\omega}$.
This space has a topology called the Fell topology. The subspace $\mathcal{F}(2^{\omega})\smallsetminus\{\varnothing\}$
can naturally be identified with trees on $\{0,1\}$ with no dead
branches. Barmpalias et al.\ \cite{Barmpalias2007} gave a natural
construction of these trees from ternary sequences in $3^{\omega}$.
Axon \cite{Axon2010} showed the corresponding map $T\colon3^{\omega}\rightarrow\mathcal{F}(2^{\omega})\smallsetminus\{\varnothing\}$
is a homeomorphism between $3^{\omega}$ and the Fell topology restricted
to $\mathcal{F}(2^{\omega})\smallsetminus\{\varnothing\}$. Hence
$\mathcal{F}(2^{\omega})\smallsetminus\{\varnothing\}$ can be represented
as a computable Polish space, and the probability measure on $\mathcal{F}(2^{\omega})\smallsetminus\{\varnothing\}$
induced by $T$ can be represented as a computable probability measure.
Since $T$ is an a.e.~computable isomorphism, the computably random
closed sets of this space are then the ones whose corresponding trees
are constructed from computably random ternary sequences in $3^{\omega}$.
\end{example}

\begin{example}[Computably random structures]
The last example can be extended to a number of random structures---infinite
random graphs, Markov processes, random walks, random matrices, Galton-Watson
processes, etc. The main idea is as follows. Assume $(\Omega,P)$
is a computable probability space (the sample space), $\mathcal{X}$
is the space of structures, and $T\colon(\Omega,P)\rightarrow\mathcal{X}$
is an a.e.\ computable map (a random structure). This induces a measure
$\mu$ on $\mathcal{X}$ (the distribution of $T$). If, moreover,
$T$ is an a.e.~computable isomorphism between $(\Omega,P)$ and
$(\mathcal{X},\mu)$, then the computably random structures of $(\mathcal{X},\mu)$
are exactly the objects constructed from computably random points
in $(\Omega,P)$.
\end{example}
In this next theorem, an \noun{atom} (or \noun{point-mass}) is a
point with positive measure. An \noun{atomless} probability space
is one without atoms.
\begin{prop}
\label{prop:atomless-isomorphic}If $(\mathcal{X},\mu)$ is an atomless
computable probability space, then there is a isomorphism $T\colon(\mathcal{X},\mu)\rightarrow(2^{\omega},\lambda)$.
Further, $T$ is computable in each code for $(\mathcal{X},\mu)$.\end{prop}
\begin{proof}
Given a code for $(\mathcal{X},\mu)$, compute an $\mu$-a.e.~decidable
cell decomposition $\mathcal{A}$ for $(\mathcal{X},\mu)$. By Proposition~\ref{prop:representation-to-isomorphism},
$\name_{\mathcal{A}}\colon(\mathcal{X},\mu)\rightarrow(2^{\omega},\mu_{\mathcal{A}})$
is an isomorphism. Since $\mu$ is atomless, so is $\mu_{\mathcal{A}}$.
Let $S\colon(2^{\omega},\mu_{\mathcal{A}})\rightarrow([0,1],\lambda)$
be the total map $x\mapsto\mu_{\mathcal{A}}(\{y\in2^{\omega}\mid y\leq x\})$
where $\leq$ is the usual lexicographical order on $2^{\omega}$.
Since $\mu_{\mathcal{A}}$ is atomless, $S$ is computable and measure-preserving.
Last, $S$ is an isomorphism since for $\lambda$-a.e.~$a\in[0,1]$,
there is exactly one $x\in2^{\omega}$ such that $S(x)=a$, and this
$x$ can be computed from $a$ using the monotonicity of $S$. The
result follows, given the isomorphism between $([0,1],\lambda)$ and
$(2^{\omega},\lambda)$.\end{proof}
\begin{cor}
\label{cor:atomless-measure}If $(\mathcal{X},\mu)$ is an atomless
computable probability space, then $x\in\mathcal{X}$ is computably
random if and only if $T(x)$ is computably random for any (and all)
isomorphisms $T\colon(\mathcal{X},\mu)\rightarrow(2^{\omega},\lambda)$.\end{cor}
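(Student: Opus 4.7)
The plan is to combine the two results just established, with essentially no new work. Theorem~\ref{thm:atomless-isomorphic} (Hoyrup and Rojas) guarantees that an atomless computable probability space $(\mathcal{X},\mu)$ admits at least one a.e.\ computable isomorphism $T\colon(\mathcal{X},\mu)\rightarrow(2^{\omega},\lambda)$, so the quantification over isomorphisms in the corollary is non-vacuous. Theorem~\ref{thm:isomorphisms_preserve} then says, for every such $T$, that $x\in\mathcal{X}$ is computably random if and only if $T(x)$ is computably random in $(2^{\omega},\lambda)$.

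For the ``for any'' clause, I would just fix some isomorphism $T$ obtained from Theorem~\ref{thm:atomless-isomorphic} and invoke Theorem~\ref{thm:isomorphisms_preserve}. For the ``for all'' strengthening, the key observation is that computable randomness of $x$ is a property of the pair $(x,(\mathcal{X},\mu))$ alone, with no reference to any isomorphism. So if $T_{1},T_{2}\colon(\mathcal{X},\mu)\rightarrow(2^{\omega},\lambda)$ are any two isomorphisms, applying Theorem~\ref{thm:isomorphisms_preserve} to each separately shows
\[
T_{1}(x)\ \text{is computably random}\ \Longleftrightarrow\ x\ \text{is computably random}\ \Longleftrightarrow\ T_{2}(x)\ \text{is computably random},
\]
so the notion is independent of the choice of $T$.

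The main obstacle has already been absorbed into the upstream results: the existence of the isomorphism is the nontrivial content of Theorem~\ref{thm:atomless-isomorphic}, and the preservation statement required the correspondence between isomorphisms and cell decompositions (Proposition~\ref{prop:isomorphism-to-representation}) together with the bounded Martin-L\"of test characterization of computable randomness (Theorem~\ref{thm:comp_rand_defs}\,(\ref{enu:bounded})). At the level of the corollary itself there is nothing more to prove beyond this two-line composition.
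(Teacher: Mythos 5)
Your proposal is correct and matches the paper's own proof, which simply cites Theorem~\ref{thm:atomless-isomorphic} for existence of the isomorphism and Theorem~\ref{thm:isomorphisms_preserve} for preservation of computable randomness. The additional remark about independence of the choice of $T$ is a correct, if routine, elaboration of the same two-line composition.
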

\begin{proof}
Follows from Theorem~\ref{thm:isomorphisms_preserve} and Proposition~\ref{prop:atomless-isomorphic}.
\end{proof}

\begin{example}[Computably random Brownian motion]
Consider the space $C([0,1])$ of continuous functions from $[0,1]$
to $\mathbb{R}$ endowed with the Wiener probability measure $W$
(i.e.~the measure of Brownian motion). The space $C([0,1])$ with
the uniform norm is a computable Polish space (where the simple points
are the rational piecewise linear functions). The measure $W$ is
an atomless computable probability measure. Let $T\colon(2^{\omega},\lambda)\rightarrow(C([0,1]),W)$
be the isomorphism from Proposition~\ref{prop:atomless-isomorphic}.
(Kjos-Hanssen and Nerode \cite{Kjos-Hanssen:2009hc} construct a similar
map $\varphi$ directly for Brownian motion.) By Corollary~\ref{cor:atomless-measure},
the computably random Brownian motions (i.e.~the computably random
points of $(C([0,1]),W)$) are exactly the forward image of the computable
random sequences under the map $T$.\end{example}
\begin{cor}
\label{cor:comp-rand-A-name}Given a measure $(\mathcal{X},\mu)$
with cell decomposition $\mathcal{A}$, $x\in X$ is computably random
if and only if $\name_{\mathcal{A}}(x)$ is computably random with
respect to $(2^{\omega},\mu_{\mathcal{A}})$ where $\mu_{\mathcal{A}}(\sigma)=\mu([\sigma]_{\mathcal{A}})$.\end{cor}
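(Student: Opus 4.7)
The plan is to read this off directly from the two structural results already in hand: Proposition~\ref{prop:representation-to-isomorphism}, which packages the cell decomposition $\mathcal{A}$ into an honest a.e.\ computable isomorphism
\[
\name_{\mathcal{A}} \colon (\mathcal{X},\mu) \longrightarrow (2^{\omega},\mu_{\mathcal{A}}),
\qquad \mu_{\mathcal{A}}(\sigma) = \mu([\sigma]_{\mathcal{A}}),
\]
and Theorem~\ref{thm:isomorphisms_preserve}, which says isomorphisms between computable probability spaces transport computable randomness in both directions. Composing these two facts immediately yields the equivalence.

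More concretely, I would first invoke Proposition~\ref{prop:representation-to-isomorphism} to assert the existence and uniqueness of the computable probability measure $\mu_{\mathcal{A}}$ on $2^{\omega}$ together with the fact that $\name_{\mathcal{A}}$ is an a.e.\ computable isomorphism onto $(2^{\omega},\mu_{\mathcal{A}})$. Then I would apply Theorem~\ref{thm:isomorphisms_preserve} with $T = \name_{\mathcal{A}}$ to conclude that $x \in \mathcal{X}$ is computably random with respect to $(\mathcal{X},\mu)$ if and only if $T(x) = \name_{\mathcal{A}}(x)$ is computably random with respect to $(2^{\omega},\mu_{\mathcal{A}})$.

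There is essentially no obstacle: all of the technical work has been done elsewhere. The only minor bookkeeping issue is to note that on the $\mathcal{X}$-side, the hypothesis that $x$ is neither an unrepresented point nor in a null cell (required by Definition~\ref{def:comp_rand}) is exactly the condition that $x$ lies in the domain of $\name_{\mathcal{A}}$ and maps to a point whose $n$-cells all have positive $\mu_{\mathcal{A}}$-measure; both conditions hold for $\mu$-a.e.\ (in particular, every Kurtz random) $x$, so no points of interest are lost. Thus the corollary follows with no further argument.
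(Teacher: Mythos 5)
Your proposal is correct and matches the paper's own proof, which simply cites Proposition~\ref{prop:representation-to-isomorphism} and Theorem~\ref{thm:isomorphisms_preserve} in exactly the way you describe. The extra bookkeeping remark about unrepresented points and null cells is fine but not needed beyond what those two results already handle.
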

\begin{proof}
Use Proposition~\ref{prop:representation-to-isomorphism} and Theorem~\ref{thm:isomorphisms_preserve}.
\end{proof}

\section{Generalizing randomness to computable probability spaces\label{sec:Generalizing-randomness}}

In this section, I explain the general method of this paper which
generalizes a randomness notion from $(2^{\omega},\lambda)$ to an
arbitrary computable measure space.

Imagine we have an arbitrary randomness notion called \noun{$\mathsf{X}$-randomness}
defined on $(2^{\omega},\lambda)$. (Here $\mathsf{X}$ is a place-holder
for a name like ``Schnorr'' or ``computable''; it has no relation
to being random relative to an oracle.) The definition of $\mathsf{X}$-random
should either explicitly or implicitly assume we are working in the
fair-coin measure. The method can be reduced to three steps.

\subsection*{Step 1: Generalize $\mathsf{X}$-randomness to computable probability
measures on $2^{\omega}$.}

This is self-explanatory, although not always trivial.

\subsection*{Step 2: Generalize $\mathsf{X}$-randomness to computable probability
spaces.}

There are three equivalent ways to do this for a computable probability
space $(\mathcal{X},\mu)$.
\begin{enumerate}
\item Replace all instances of $[\sigma]^{\prec}$ with $[\sigma]_{\mathcal{A}}$,
$x\upharpoonright n$ with $x\upharpoonright_{\mathcal{A}}n$, etc.\ in
the definition of $\mathsf{X}$-random from Step 1. Call this $\mathsf{X}^{*}$-random
with respect to $\mathcal{A}$. Then define $x\in\mathcal{X}$ to
be $\mathsf{X}^{*}$-random with respect to $(\mathcal{X},\mu)$ if
it is $\mathsf{X}^{*}$-random with respect to all cell decompositions
$\mathcal{A}$ (ignoring unrepresented points of $\mathcal{A}$ and
points in null cells ---which are not even Kurtz random). (Compare
with Definition~\ref{def:comp_rand}.)
\item Define $x\in\mathcal{X}$ to be $\mathsf{X}^{*}$-random with respect
to $(\mathcal{X},\mu)$ if for each cell decomposition $\mathcal{A}$,
$\name_{\mathcal{A}}(x)$ is $\mathsf{X}$-random with respect to
$(2^{\omega},\mu_{\mathcal{A}})$, where $\mu_{\mathcal{A}}$ is given
by $\mu_{\mathcal{A}}(\sigma)=\mu([\sigma]_{\mathcal{A}})$. (Compare
with Corollary~\ref{cor:comp-rand-A-name}.)
\item Define $x\in\mathcal{X}$ to be $\mathsf{X}^{*}$-random with respect
to $(\mathcal{X},\mu)$ if for all isomorphisms $T\colon(\mathcal{X},\mu)\rightarrow(2^{\omega},\nu)$
we have that $T(x)$ is $\mathsf{X}$-random with respect to $(2^{\omega},\nu)$.
(Compare with Theorem~\ref{thm:isomorphisms_preserve}.)
\end{enumerate}
The description of (1) is a bit vague, but when done correctly it
is the most useful definition. The definition given by (1) should
be equivalent to that given by (2) because (1) is essentially about
$\mathcal{A}$-names. To see that (2) and (3) give the same definition,
use Propositions~\ref{prop:isomorphism-to-representation} and \ref{prop:representation-to-isomorphism},
which show that isomorphisms to $2^{\omega}$ are maps to $\mathcal{A}$-names
and vice versa.

\subsection*{Step 3: Verify that the new definition is consistent with the original.}

It may be that on $(2^{\omega},\lambda)$ the class of $\mathsf{X}^{*}$-random
points is strictly smaller that the class of the original $\mathsf{X}$-random
points. There are three equivalent techniques to show that $\mathsf{X}^{*}$-randomness
with respect to $2^{\omega}$ is equivalent to $\mathsf{X}$-randomness.
The three techniques are related to the three definitions from Step
2.
\begin{enumerate}
\item Show the definition of $\mathsf{X}^{*}$-randomness is invariant under
the choice of cell decomposition. (Compare with Theorem~\ref{thm:indepentant-of-representation}.)
\item Show that for every two cell decompositions $\mathcal{A}$ and $\mathcal{B}$,
the $\mathcal{A}$-name of $x$ is $\mathsf{X}$-random with respect
to $(2^{\omega},\mu_{\mathcal{A}}$) if and only if the $\mathcal{B}$-name
is $\mathsf{X}$-random with respect to $(2^{\omega},\mu_{\mathcal{B}}$).
(Compare with Corollary~\ref{cor:comp-rand-A-name}.)
\item Show that $\mathsf{X}$-randomness is invariant under all isomorphisms
from $(2^{\omega},\mu)$ to $(2^{\omega},\nu)$. (Compare with Theorem~\ref{thm:isomorphisms_preserve}.)
\end{enumerate}
Again, these three approaches are equivalent. Assuming the definition
is stated correctly, (1) and (2) say the same thing. 

To see that (3) implies (2), assume $\mathsf{X}$-randomness is invariant
under isomorphisms on $2^{\omega}$. Consider two cell decompositions
$\mathcal{A}$ and $\mathcal{B}$ of the same space $(\mathcal{X},\mu)$.
By Proposition~\ref{prop:representation-pairs-to-isomorphisms}\,(1),
there is an isomorphism $S\colon(2^{\omega},\mu_{\mathcal{A}})\rightarrow(2^{\omega},\mu_{\mathcal{B}})$
which maps $\mathcal{A}$-names to $\mathcal{B}$-names, i.e.\ this
diagram commutes.\[
\begin{tikzcd}[column sep=large] 
    (\mathcal{X},\mu) 
      \arrow{r}{\name_{\mathcal{A}}}
      \arrow{dr}[swap, sloped, near end] 
        {\name_{\mathcal{B}}} 
  & (2^\omega,\mu_{\mathcal{A}}) 
      \arrow{d}{S} \\
    {}
  & (2^\omega,\mu_{\mathcal{B}})
\end{tikzcd}
\]Since $S$ preserves $\mathsf{X}$-randomness, $\name_{\mathcal{A}}(x)$
is $\mathsf{X}$-random with respect to $(2^{\omega},\mu_{\mathcal{A}})$
if and only if and only if $\name_{\mathcal{B}}(x)$ is $\mathsf{X}$-random
with respect to $(2^{\omega},\mu_{\mathcal{B}})$.

To see that (2) implies (3), assume that (2) holds. Consider an isomorphism
$S\colon(2^{\omega},\mu)\rightarrow(2^{\omega},\nu)$. Let $(\mathcal{X},\kappa)$
be any space isomorphic to $(2^{\omega},\mu)$. Then $(\mathcal{X},\kappa)$
is also isomorphic to $(2^{\omega},\nu)$. So there are isomorphisms
$T_{1}$ and $T_{2}$ such that this diagram commutes. \[
\begin{tikzcd} 
    (\mathcal{X},\kappa) 
      \arrow{r}{T_1} 
      \arrow{dr}[swap]{T_2} 
  & (2^\omega,\mu) \arrow{d}{S} \\
    {}
  & (2^\omega,\nu) \\
\end{tikzcd}
\]By Proposition~\ref{prop:isomorphism-to-representation} there are
two cell decompositions $\mathcal{A}$ and $\mathcal{B}$ on $(\mathcal{X},\kappa)$
such that $T_{1}=\name_{\mathcal{A}}$ and $(2^{\omega},\mu)=(2^{\omega},\kappa_{\mathcal{A}})$.
The same holds for $\mathcal{B}$ and $\nu$. Then we have this commutative
diagram. \[
\begin{tikzcd}[column sep=large]
    (\mathcal{X},\kappa) 
      \arrow{r}{\name_{\mathcal{A}}}
      \arrow{dr}[swap, sloped, near end] 
        {\name_{\mathcal{B}}}
  & (2^\omega,\kappa_{\mathcal{A}}) \arrow{d}{S} \\
    {}
  & (2^\omega,\kappa_{\mathcal{B}}) \\
\end{tikzcd}
\]Consider any $\mathsf{X}$-random $y\in(2^{\omega},\kappa_{\mathcal{A}})$.
It is the $\mathcal{A}$-name of some $x\in(\mathcal{X},\kappa)$,
in other words $y=\name_{\mathcal{A}}(x)$. By (2), we also have that
$\name_{\mathcal{B}}(x)$ is $\mathsf{X}$-random. So $S$ preserves
$\mathsf{X}$-randomness.

~

Notice that Step 3 implies that some randomness notions cannot be
generalized without making the set of randoms smaller. This is because
they are not invariant under isomorphisms between computable probability
measures on $2^{\omega}$. Yet, even when the $\mathsf{X}^{*}$-randoms
are a proper subclass of the $\mathsf{X}$-randoms, the $\mathsf{X}^{*}$
randoms are an interesting class of randomness. In particular we have
the following.
\begin{prop}
$\mathsf{X}^{*}$-randomness is invariant under isomorphisms.
\end{prop}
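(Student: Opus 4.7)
The plan is to apply characterization (3) of $\mathsf{X}^*$-randomness from Step 2: a point $x$ is $\mathsf{X}^*$-random on $(\mathcal{X},\mu)$ precisely when $U(x)$ is $\mathsf{X}$-random on $(2^{\omega},\rho)$ for every isomorphism $U\colon(\mathcal{X},\mu)\to(2^{\omega},\rho)$. The key observation will be that composition with a given isomorphism $T\colon(\mathcal{X},\mu)\to(\mathcal{Y},\nu)$ sets up a bijective correspondence $V\leftrightarrow V\circ T$ between isomorphisms $(\mathcal{Y},\nu)\to(2^{\omega},\rho)$ and isomorphisms $(\mathcal{X},\mu)\to(2^{\omega},\rho)$, so the universal quantifier in (3), evaluated at $x$ and at $T(x)$, ranges over essentially the same family of maps.

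First I would record the routine fact that the composition of two a.e.\ computable isomorphisms is again an a.e.\ computable isomorphism: a.e.\ computable maps compose (the preimage of a $\Pi_{2}^{0}$ set under an a.e.\ computable map is $\Pi_{2}^{0}$ of full measure), measure-preserving maps compose, and the two-sided a.e.\ inverse identities descend from those of the factors. It follows that for any isomorphism $V\colon(\mathcal{Y},\nu)\to(2^{\omega},\rho)$, the composite $V\circ T\colon(\mathcal{X},\mu)\to(2^{\omega},\rho)$ is an isomorphism, and conversely every isomorphism $U\colon(\mathcal{X},\mu)\to(2^{\omega},\rho)$ factors as $V\circ T$ with $V=U\circ T^{-1}$. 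I would also note that at least one such $V$ exists, so the quantifier is non-vacuous: Theorem~\ref{thm:a.e.- decidable-repr} supplies a cell decomposition $\mathcal{B}$ of $(\mathcal{Y},\nu)$, and Proposition~\ref{prop:representation-to-isomorphism} turns it into the isomorphism $\name_{\mathcal{B}}\colon(\mathcal{Y},\nu)\to(2^{\omega},\nu_{\mathcal{B}})$.

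The main step would then proceed as follows. Assume $x$ is $\mathsf{X}^*$-random and let $V\colon(\mathcal{Y},\nu)\to(2^{\omega},\rho)$ be any isomorphism. Applying characterization (3) to $x$ and the isomorphism $V\circ T$ shows that $(V\circ T)(x)$ is $\mathsf{X}$-random on $(2^{\omega},\rho)$. Since this equals $V(T(x))$ at the point $x$ (see below) and $V$ was arbitrary, a second application of (3), this time to $T(x)$, shows $T(x)$ is $\mathsf{X}^*$-random. The reverse implication is symmetric via $T^{-1}$.

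The only delicate bookkeeping is verifying the pointwise identity $(V\circ T)(x)=V(T(x))$ at the specific point $x$, rather than merely $\mu$-a.e. This uses the standing convention built into Step 2 that any $\mathsf{X}^*$-random point is Kurtz random, and the corollary to Proposition~\ref{prop:partial-comp} that Kurtz randoms lie in the domain of every a.e.\ computable map; this ensures $T$ is defined at $x$, $V$ at $T(x)$, and the composition behaves as expected. I do not anticipate a substantive obstacle beyond this; the content of the proposition is really just that characterization (3) is stated in a form (``for all isomorphisms to Cantor space'') that is automatically transported by precomposition with an isomorphism.
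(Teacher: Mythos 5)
Your proof is correct and is essentially the paper's own argument: the paper fixes an arbitrary cell decomposition $\mathcal{B}$ of $(\mathcal{Y},\nu)$ and pulls it back to $\mathcal{A}=T^{-1}(\mathcal{B})$ via Propositions~\ref{prop:isomorphism-to-representation} and~\ref{prop:representation-pairs-to-isomorphisms}\,(2), which under the correspondence between cell decompositions and isomorphisms to $2^{\omega}$ is exactly your precomposition $V\mapsto V\circ T$, phrased in terms of characterization (2) of Step~2 rather than characterization (3). The domain and Kurtz-randomness bookkeeping you flag is handled at the same (implicit) level of rigor in the paper, so there is no substantive difference.
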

In some sense the $\mathsf{X}^{*}$-randoms are the largest such subclass
of the $\mathsf{X}$-randoms. (One must be careful how to say this,
since $\mathsf{X}$-randomness is only defined with respect to measures
$(2^{\omega},\mu)$.)
\begin{proof}
Let $T\colon(\mathcal{X},\mu)\rightarrow(\mathcal{Y},\nu)$ be an
isomorphism and let $x\in(\mathcal{X},\mu)$ be $\mathsf{X}^{*}$-random.
Let $\mathcal{B}$ be a arbitrary cell decomposition of $(\mathcal{Y},\nu)$.
Since $\mathcal{B}$ is arbitrary, it is enough to show that $\name_{B}(T(x))$
is $\mathsf{X}$-random with respect to $(2^{\omega},\nu_{\mathcal{B}})$.
By Proposition~\ref{prop:isomorphism-to-representation} and Proposition~\ref{prop:representation-pairs-to-isomorphisms}\,(2)
we have a cell decomposition $\mathcal{A}$ on $(\mathcal{X},\mu)$
such that $(2^{\omega},\mu_{\mathcal{A}})=(2^{\omega},\nu_{\mathcal{B}})$
and the following diagram commutes. \[
\begin{tikzcd}[column sep=large] 
    (\mathcal{X},\mu) 
      \arrow{r}{\name_{\mathcal{A}}} 
      \arrow{d}{T} 
  & (2^\omega,\mu_{\mathcal{A}}) 
      \arrow[equals, semithick ]{d}{} \\
    (\mathcal{Y},\nu) 
      \arrow{r}{\name_{\mathcal{B}}}
  & (2^\omega,\nu_{\mathcal{B}})
\end{tikzcd}
\]Since $x\in(\mathcal{X},\mu)$ is $\mathsf{X}^{*}$-random, $\name_{\mathcal{A}}(x)$
is $\mathsf{X}$-random with respect to $(2^{\omega},\mu_{\mathcal{A}})=(2^{\omega},\nu_{\mathcal{B}})$.
Since the diagram commutes, $\name_{B}(T(x))$ is also $\mathsf{X}$-random
with respect to $(2^{\omega},\nu_{\mathcal{B}})$. Since $\mathcal{B}$
is arbitrary, $x$ is $\mathsf{X}$-random.
\end{proof}
In the case that $(\mathcal{X},\mu)$ is an atomless computable probability
measure, we could instead define $x\in\mathcal{X}$ to be $\mathsf{X}^{\bigstar}$-random
if $T(x)$ is random for all isomorphisms $T\colon(\mathcal{X},\mu)\rightarrow(2^{\omega},\lambda)$.
We can then skip Step~1, and in Step~3 it is enough to check that
$\mathsf{X}$-randomness is invariant under automorphisms of $(2^{\omega},\lambda)$.
Similarly, $\mathsf{X}^{\bigstar}$-randomness would be invariant
under isomorphisms.

\section{Betting strategies and Kol\-mo\-go\-rov-Love\-land randomness\label{sec:Betting-strategies-and-KL}}

In the next two sections I consider how the method of Section~\ref{sec:Generalizing-randomness}
can be applied to Kol\-mo\-go\-rov-Love\-land randomness, which
is also defined through a betting strategy on the bits of the sequence.

Call a betting strategy on bits \noun{nonmonotonic} if the gambler
can decide at each stage which coin toss to bet on. For example, maybe
the gambler first bets on the $5$th bit. If it is $0$, then he bets
on the $3$rd bit; if it is $1$, he bets on the $8$th bit. (Here,
and throughout this paper we still assume the gambler cannot bet more
than what is in his capital, i.e.~he cannot take on debt.) A sequence
$x\in2^{\omega}$ is \noun{Kol\-mo\-go\-rov-Love\-land random} or
\noun{nonmonotonically random} (with respect to $(2^{\omega},\lambda)$)
if there is no computable nonmonotonic betting which succeeds on $x$.

Indeed, this gives a lot more freedom to the gambler and leads to
a strictly stronger notion than computable randomness. While it is
easy to show that every Martin-Löf random is Kol\-mo\-go\-rov-Love\-land
random, the converse is a difficult open question.
\begin{question}
\label{open:KL_equals_ML}Is Kol\-mo\-go\-rov-Love\-land randomness
the same as Martin-Löf randomness?
\end{question}
On one hand, there are a number of results that show Kol\-mo\-go\-rov-Love\-land
randomness is very similar to Martin-Löf randomness. On the other
hand, it is not even known if Kol\-mo\-go\-rov-Love\-land randomness
is base invariant, and it is commonly thought that Kol\-mo\-go\-rov-Love\-land
randomness is strictly weaker than Martin-Löf randomness. For the
most recent results on Kol\-mo\-go\-rov-Love\-land randomness
see the books by Downey and Hirschfeldt \cite[Section\ 7.5]{Downey2010}
and Nies \cite[Section\ 7.6]{Nies2009}, as well as the articles by
Bienvenu, Hölzl, Kräling and Merkle \cite{Bienvenu2010}, Kastermans
and Lempp \cite{Kastermans2010}, and Merkle, Miller, Nies, Reimann
and Stephan \cite{Merkle2006a}.

In this section I will ask what type of randomness one gets by applying
the method of Section~\ref{sec:Generalizing-randomness} to Kol\-mo\-go\-rov-Love\-land
randomness. The result is Martin-Löf randomness. However, this does
not prove that Kol\-mo\-go\-rov-Love\-land randomness is the same
as Martin-Löf randomness, since I leave as an open question whether
Kol\-mo\-go\-rov-Love\-land randomness (naturally extended to
all computable probability measures on $2^{\omega}$) is invariant
under isomorphisms. The presentation of this section follows the three-step
method of Section~\ref{sec:Generalizing-randomness}.

\subsection{\label{sub:Step-1-KL}Step 1: Generalize to other computable probability
measures $\mu$ on $2^{\omega}$}

Kol\-mo\-go\-rov-Love\-land randomness can be naturally extended
to computable probability measures on $2^{\omega}$. Namely, bet as
usual, but adjust the payoffs to be fair. For example, if the gambler
wagers $1$ unit of money to bet that $x(4)=1$ (i.e.\ the $4$th
bit is $1$) after seeing that $x(2)=1$ and $x(6)=0$, then if he
wins, the fair payoff is
\[
\frac{\mu\left(x(4)=0\mid x(2)=1,x(6)=0\right)}{\mu\left(x(4)=1\mid x(2)=1,x(6)=0\right)}
\]
where $\mu(A\mid B)=\mu(A\cap B)/\mu(B)$ represents the conditional
probability of $A$ given $B$. He also keeps the unit that he wagered.
If the gambler loses, he loses his unit of money. 

(Note, we could also allow the gambler to bet on a bit he has already
seen. Indeed, he will not win any money. This would, however, introduce
``partial randomness'' since the gambler could delay betting on
a new bit. Nonetheless, Merkle \cite{Merkle2003} showed that partial
Kol\-mo\-go\-rov-Love\-land randomness is the same as Kol\-mo\-go\-rov-Love\-land
randomness.)

As with computable randomness, we must address division by zero. The
gambler is not allowed to bet on a bit if it has probability zero
of occurring (conditioned on the information already known). Instead
we just declare the elements of such null cylinder sets to be not
random.

\subsection{Step 2: Generalize Kol\-mo\-go\-rov-Love\-land randomness to
computable probability measures}

Fix a computable probability space $(\mathcal{X},\mu)$ with generator
$\mathcal{A}=(A_{n})$. Following the second step of the method in
Section~\ref{sec:Generalizing-randomness}, the gambler bets on the
bits of the $\mathcal{A}$-name of $x$. A little thought reveals
that what the gambler is doing when she bets that the $n$th bit of
the $\mathcal{A}$-name is $1$ is betting that $x\in A_{n}$. For
any generator $\mathcal{A}$, if we add more a.e.~decidable sets
to $\mathcal{A}$, it is still a generator. Further, since we are
not necessary betting on all the sets in $\mathcal{A}$, we do not
even need to assume $\mathcal{A}$ is anything more than a computably
indexed family of a.e.~decidable sets. (This is the key difference
between our new notion of randomness and computable randomness.)

Hence, we may think of the betting strategy as follows. At each stage,
the gambler chooses some a.e.~decidable set $A$ and bets that $x\in A$
(or $x$ has property $A$). (Again, the gambler must know that $\mu(A)>0$
before betting on it.) Then if she wins, she gets a fair payoff, and
if she loses, she loses her stake. Based on her win/loss, she then
chooses a new set $B$ to bet on, and so on.\footnote{Assume that her strategy is ``total'', in that she already has picked
out a set $B$ to bet on if she wins on $A$ and a set $B'$ to bet
on if she losses on $A$, and so on. (See Remark~\ref{rem:betting-strategy}
for a formal definition.) Although, as already mentioned, she can
mimic a partial strategy by betting on a set she has already seen.
Moreover, she can combine $B$ and $B'$ into one set by betting on
$(B\cap A)\cup(B'\cap(\mathcal{X}\smallsetminus A))$.} Call such a strategy a \noun{computable betting strategy}. Call
the resulting randomness \noun{betting randomness}. (A more formal
definition is given in Remark~\ref{rem:betting-strategy}.)

I argue that betting randomness is the most general randomness notion
that can be described by a finitary fair-game betting scenario with
a ``computable betting strategy.'' Indeed, consider these three
basic properties of such a game:
\begin{enumerate}
\item The gambler must be able to determine (almost surely) some property
of $x$ that she is betting on, and this determination must be made
with only the information about $x$ that she has gained during the
game.
\item A bookmaker must be able to determine (almost surely) if this property
holds of $x$ or not.
\item If the gambler wins, the bookmaker must be able to determine (almost
surely) the fair payoff amount.
\end{enumerate}
The only way to satisfy (2) is if the property is a.e.~decidable.
Then (3) follows since a.e.~decidable sets have finite descriptions
and their measures are computable. To satisfy (1), the gambler must
be able to compute the a.e.~decidable set only knowing the results
of her previous bets. This is exactly the computable betting strategy
defined above.\footnote{In the three properties we did not consider the possibility of betting
on a collection of three or more pair-wise disjoint events simultaneously.
This is not an issue since one may use techniques similar to those
in Example~\ref{ex:comp-rand-on-3-omega-1}. There is also a more
general possibility of having a computable or a.e.~computable wager
function over the space $\mathcal{X}$. This can be made formal using
the martingales in probability theory, but it turns out that it does
not change the randomness characterized by such a strategy. By an
unpublished result of Ed Dean {[}personal communication{]}, any $L^{1}$-bounded
layerwise-computable martingale converges on Martin-Löf randomness
(which, as we will see, is equivalent to betting randomness).}

Now recall Schnorr's Critique that Martin-Löf randomness does not
have a ``computable-enough'' definition. The definition Schnorr
had in mind was a betting scenario. In particular, Schnorr gave a
martingale characterization of Martin-Löf randomness that is the same
as that of computable randomness, except the martingales are only
lower semicomputable \cite{Schnorr1971} (see also \cite{Downey2010,Nies2009}).
If Martin-Löf randomness equals Kol\-mo\-go\-rov-Love\-land randomness,
then some believe that this will give a negative answer to Schnorr's
Critique; namely, we will have found a computable betting strategy
that describes Martin-Löf randomness. While, there is some debate
as to what Schnorr meant by his critique (and whether he still agrees
with it), we think the following is a worthwhile question.\smallskip{}

\begin{quote}
\emph{Can Martin-Löf randomness be characterized using a finitary fair-game
betting scenario with a ``computable betting strategy''?}
\end{quote}
\smallskip{}
The answer turns out to be yes. As this next theorem shows, betting
randomness is equivalent to Martin-Löf randomness. Hitchcock and Lutz
\cite{Hitchcock2006} defined a generalization of martingales (as
in the type used to define computable randomness with respect to $2^{\omega}$)
called martingale processes. In the terminology of this paper, a \noun{martingale process}
is basically a computable betting strategy on $2^{\omega}$ with the
fair-coin measure which bets on decidable sets (i.e.~finite unions
of basic open sets). Merkle, Mihailovi\'{c} and Slaman \cite{Merkle2006}
showed that Martin-Löf randomness is equivalent to the randomness
characterized by martingale processes. The proof of this next theorem
is basically the Merkle et al.\ proof.\footnote{Downey and Hirschfelt \cite[footnote on p.~269]{Downey2010} also
remark that the Merkle et al.\ result gives a possible answer to
Schnorr's critique.}
\begin{thm}
\label{thm:Betting-random-is-ML}Betting randomness and Martin-Löf
randomness are the same.\end{thm}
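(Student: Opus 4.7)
The plan is to prove the two implications separately, in both cases working with the contrapositive.

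For the easy direction --- Martin-L\"of random implies betting random --- suppose a computable betting strategy $\mathcal{S}$ succeeds on $x$, so the induced capital process $(C_n)$ is unbounded at $x$. Each $C_n$ is an a.e.\ computable function of $y \in \mathcal{X}$, and by the fairness condition $\int C_n\, d\mu = C_0$ for every $n$. The supremum $C^{*} := \sup_n C_n$ is therefore lower semicomputable on $\mathcal{X}$. By Ville's inequality (equivalently, Doob's maximal inequality applied to the nonnegative martingale $C_n$), $\mu(\{y : C^{*}(y) > 2^k C_0\}) \leq 2^{-k}$. The sets $V_k := \{y : C^{*}(y) > 2^k C_0\}$ therefore constitute a Martin-L\"of test, and this test covers $x$ because $C^{*}(x) = \infty$. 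Alternatively, one may simply invoke the integral test characterization of Martin-L\"of randomness: $C^{*}$ is itself a lower semicomputable integral test with $\int C^{*}\, d\mu \leq C_0 \cdot \sum_n 2^{-n} < \infty$ after a mild modification, and $C^{*}(x) = \infty$.

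For the harder direction --- betting random implies Martin-L\"of random --- I will adapt the Merkle-Mihailovi\'{c}-Slaman construction. Fix a cell decomposition $\mathcal{A}$ of $(\mathcal{X},\mu)$ and suppose $x$ is not Martin-L\"of random. Take a Martin-L\"of test $(U_n)$ covering $x$ with $\mu(U_n) \leq 2^{-2n}$. By Proposition~\ref{prop:decompose-open-set}, each $U_n$ equals a c.e.\ disjoint union $\bigcup_i [\sigma_{n,i}]_{\mathcal{A}}$ up to a $\mu$-null set, with the strings $\sigma_{n,i}$ and the measures $\mu([\sigma_{n,i}]_{\mathcal{A}})$ uniformly computable. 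The strategy enumerates these cells across all levels $n$ and places a sequence of fair bets on them. At the $n$-th block of bets, the gambler risks a fraction $2^{-n}$ of her current capital, distributed across the cells of $U_n$ so that observing $x \in [\sigma_{n,i}]_{\mathcal{A}}$ returns a fair payoff; summing over $i$, the expected result of the block is zero, but an actual hit (guaranteed on $x \in U_n$) multiplies the block stake by at least $\mu(U_n)^{-1} \geq 2^{2n}$, producing a multiplicative gain bounded below by a fixed constant per level. Composing this with the savings procedure of Theorem~\ref{thm:comp_rand_defs}(\ref{enu:mart_savings}) ensures the capital diverges on any $x \in \bigcap_n U_n$. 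Since the cells and their measures are uniformly computable and the gambler's next bet depends only on which cells she has already observed containing $x$, this is a legitimate computable betting strategy.

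The main obstacle is the bookkeeping in the second direction: interleaving bets from infinitely many levels into a single sequential strategy while guaranteeing solvency at every stage and verifying that the net multiplicative gain per level is bounded below by a fixed constant whenever $x \in U_n$. In the original Cantor-space setting of Merkle, Mihailovi\'{c}, and Slaman this is handled by betting on basic cylinders; here the only change is that one bets on the a.e.\ decidable cells $[\sigma_{n,i}]_{\mathcal{A}}$ instead, and the computability of the cell measures $\mu([\sigma_{n,i}]_{\mathcal{A}})$ lets the gambler compute fair odds on the fly. Solvency is automatic since each block risks at most a $2^{-n}$ fraction of current capital, so $\prod_n(1 - 2^{-n}) > 0$ bounds possible losses below. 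Once these estimates are in place, the savings trick upgrades the multiplicative growth of capital on $x$ into genuine success of the strategy.
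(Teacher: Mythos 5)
Your first direction is essentially the paper's argument: the sets $V_k=\{y\mid C^{*}(y)>2^{k}C_0\}$ are exactly the Martin-L\"of test the paper extracts from Ville's inequality, and that part is fine. (Drop the aside that $C^{*}$ is itself an integral test: the maximal function of an $L^{1}$-bounded nonnegative martingale need not be integrable, so $\int C^{*}\,d\mu$ can be infinite; the test $(V_k)$ is the correct object.)

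The second direction has a genuine gap. In the betting formalism of Remark~\ref{rem:betting-strategy}, every wager is paid at fair odds \emph{conditional on the information $B_{\sigma}$ already revealed}. After your level-$n$ block has located $x$ inside some cell $[\sigma_{n,i}]_{\mathcal{A}}$, the level-$(n+1)$ bets must be priced using $q=\mu(U_{n+1}\mid[\sigma_{n,i}]_{\mathcal{A}})$. Nothing prevents $\mu([\sigma_{n,i}]_{\mathcal{A}})$ from being far smaller than $2^{-2(n+1)}$, in which case $q$ can be arbitrarily close to $1$ (the cell may even be a.e.\ contained in $U_{n+1}$); a fair bet of size $w$ then pays only $w(1-q)/q$, and securing any fixed multiplicative gain would require wagering more than the current capital. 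So the claim that a hit ``multiplies the block stake by at least $\mu(U_n)^{-1}$'' fails once conditioning is accounted for, and the capital need not diverge on $\bigcap_n U_n$. The paper's proof is built precisely to dodge this: it processes one level at a time (no interleaving is needed, since if $x\in U$ the sequential bets on the cells of $U$ are guaranteed to hit eventually, and each hit doubles the capital), and --- this is the step you are missing --- after landing in a cell of measure greater than $2^{-k}$ it \emph{adaptively jumps to $U_{k+1}$}, whose conditional measure inside that cell is then at most $1/2$, so the next doubling bet is again solvent and again pays at least even odds. Without that adaptive choice of the next test level, your per-level gain estimate does not close. (The savings trick is also beside the point here: success only requires $\limsup M=\infty$, so the real work is establishing any divergence at all.)
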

\begin{proof}
Fix a computable probability space $(\mathcal{X},\mu)$. To show Martin-Löf
randomness implies betting randomness, we use a standard argument
which was employed by Hitchcock and Lutz \cite{Hitchcock2006} for
martingale processes. Assume $x\in\mathcal{X}$ is not betting random.
Namely, there is some computable betting strategy $\mathcal{B}$ which
succeeds on $x$. Without loss of generality, the starting capital
of $\mathcal{B}$ may be assumed to be $1$. Let $U_{n}=\{x\in\mathcal{X}\mid\mathcal{B}\text{ wins more than }2^{n}\text{ on }x\}$.
Each $U_{n}$ is $\Sigma_{1}^{0}$ in $n$, and by a standard result
in martingale theory $\mu(U_{n})\leq C2^{-n}$ where $C=1$ is the
starting capital.\footnote{This follows from Kolmogorov\textquoteright s inequality (proved by
Ville, see \cite[Theorem 6.3.3 and Lemma 6.3.15\,(ii)]{Downey2010})
which is a straight-forward application of Doob's submartingale inequality
(see for example \cite[Section 16.4]{Williams1991}).} Hence $(U_{n})$ is a Martin-Löf test which covers $x$, and $x$
is not Martin-Löf random.

For the converse, the argument is basically the Merkle, Mihailovi\'{c}
and Slaman \cite{Merkle2006} proof for martingale processes.

First, let us prove a fact. Assume a gambler starts with a capital
of $1$ and $U\subset\mathcal{X}$ is some $\Sigma_{1}^{0}$ set such
that $\mu(U)\leq1/2$. Then there is a computable way that the gambler
can bet on an unknown $x\in\mathcal{X}$ such that he doubles his
capital (to 2) if $x\in U$ (actually, some $\Sigma_{1}^{0}$ set
a.e.\ equal to $U$). The strategy is as follows. Choose a cell decomposition
$\mathcal{A}$ of $(\mathcal{X},\mu)$. Since $U$ is $\Sigma_{1}^{0}$,
by Proposition~\ref{prop:decompose-open-set} there is a c.e., prefix-free
set $\{\sigma_{i}\}$ (c.e.\ in the code for $U$) of finite strings
such that $U=\bigcup_{i}[\sigma_{i}]_{\mathcal{A}}$ a.e. We may assume
$\mu([\sigma_{i}]_{\mathcal{A}})>0$ for all $i$. To start, the gambler
bets on the set $[\sigma_{0}]_{\mathcal{A}}$ with a wager such that
if he wins, his capital is $2$. If he wins, he is done. If he loses,
then he bets on the set $[\sigma_{1}]_{\mathcal{A}}$, and so on.
Since the set $\{\sigma_{i}\}$ may be finite, the gambler may not
have a set to bet on at certain stages. This is not an issue, since
he may just bet on the whole space. This is functionally equivalent
to not betting at all since he wins no money. 

The only difficulty now is showing that his capital remains nonnegative.
Merkle et al.\ leave this an exercise for the reader; I give an intuitive
argument. It is well-known in probability theory that in a betting
strategy one can combine bets for the same effect. (Formally, this
is the martingale stopping theorem---see \cite{Williams1991}.) Hence
instead of separately betting on $[\sigma_{0}]_{\mathcal{A}},\ldots,[\sigma_{k}]_{\mathcal{A}}$
the gambler will have the same capital as if he just bet on the union
$[\sigma_{0}]_{\mathcal{A}}\cup\ldots\cup[\sigma_{k}]_{\mathcal{A}}$.
In the later case, the proper wager would be. 
\[
\frac{\mu([\sigma_{0}]_{\mathcal{A}}\cup\ldots\cup[\sigma_{k}]_{\mathcal{A}})}{\mu(\mathcal{X}\smallsetminus([\sigma_{0}]_{\mathcal{A}}\cup\ldots\cup[\sigma_{k}]_{\mathcal{A}}))}\leq1,
\]
The inequality follows from 
\[
\mu([\sigma_{0}]_{\mathcal{A}}\cup\ldots\cup[\sigma_{k}]_{\mathcal{A}})\leq1/2\leq\mu(\mathcal{X}\smallsetminus([\sigma_{0}]_{\mathcal{A}}\cup\ldots\cup[\sigma_{k}]_{\mathcal{A}})).
\]
Hence the gambler never wagers (and so loses) more than his starting
capital of $1$.

Now, assume $z\in\mathcal{X}$ is not Martin-Löf random. Let $(U_{k})$
be a Martin-Löf test which covers $z$. We may assume $(U_{k})$ is
decreasing. The betting strategy will be as follows. Let $x\in\mathcal{X}$
be the sequence we are betting on. Since $\mu(U_{1})<1/2$ we can
start with the computable betting strategy above which will reach
a capital of $2$ if $x\in U_{1}$. (Recall, we are not actually betting
on $U_{1}$, but the a.e.\ equal set $\bigcup_{i}[\sigma_{i}]_{\mathcal{A}}$.
This is not an issue, since the difference is a null $\Sigma_{2}^{0}$
set. If $x$ is in the difference, then $x$ is not computably random,
and so not betting random.)

Now, if the capital of $2$ is never reached then $x\notin U_{1}$
and $x$ is random. However, if the capital of $2$ is reached (in
a finite number of steps) then we know that $x\in[\sigma]_{\mathcal{A}}$
for some $\sigma=\sigma_{i}$ (and no other). Further, by the assumptions
in the above construction, $\mu([\sigma]_{\mathcal{A}})>2^{-k}$ for
some $k$. Then we can repeat the first step, but now we bet that
$x\in U_{k+1}$ and attempt to double our capital to $4$. Since $\mu(U_{k+1}\mid[\sigma]_{\mathcal{A}})\leq1/2$,
the capital will remain positive. 

Continuing this strategy for capitals of $8,16,32,\ldots$ we have
a computable betting strategy which succeeds on $z$.\end{proof}
\begin{rem}
\label{rem:universal-strategy}Since there is a universal Martin-Löf
test $(U_{k})$, there is a universal computable betting strategy.
(The null $\Sigma_{2}^{0}$ set of exceptions can be handled by being
more careful. Choose $\mathcal{A}$ to be basis for the topology,
and combine the null cells $[\sigma]_{\mathcal{A}}$ with non-null
cells $[\tau]_{\mathcal{A}}$.) However, note that this universal
strategy is very different from that of Kol\-mo\-go\-rov-Love\-land
randomness. This is the motivation for the next section.
\end{rem}
It is also possible to characterize Martin-Löf randomness by computable
randomness. First I give a more formal definition of computable betting
strategy.
\begin{rem}
\label{rem:betting-strategy}Represent a computable betting strategy
as follows. There is a computably indexed family of a.e.~decidable
sets $\{A_{\sigma}\}_{\sigma\in2^{<\omega}}$. These represent the
sets being bet on after the wins/losses characterized by $\sigma\in2^{<\omega}$.
From this, we have a computably indexed family $\{B_{\sigma}\}_{\sigma\in2^{<\omega}}$
defined recursively by $B_{\varepsilon}=\mathcal{X}$, $B_{\sigma1}=B_{\sigma}\cap A_{\sigma}$
and $B_{\sigma0}=B_{\sigma}\cap(\mathcal{X}\smallsetminus A_{\sigma})$.
This represents the known information after the wins/losses characterized
by $\sigma\in2^{<\omega}$. It is easy to see that $B_{\sigma0}\cap B_{\sigma1}=\varnothing$
and $B_{\sigma0}\cup B_{\sigma1}=B_{\sigma}$ a.e. Then a computable
betting strategy can be represented as a partial computable martingale
$M\colon{\subseteq{}}2^{<\omega}\rightarrow[0,\infty)$ such that
\[
M(\sigma0)\mu(B_{\sigma0})+M(\sigma1)\mu(B_{\sigma1})=M(\sigma)\mu(B_{\sigma})
\]
and if $\sigma\notin\dom M$ then $\mu(B_{\sigma0})=0$. Again, $M(\sigma)$
represents the capital after a state of $\sigma$ wins/losses. Say
the strategy \noun{succeeds} on $x$ if there is some strictly-increasing
chain $\sigma_{0}\prec\sigma_{1}\prec\sigma_{2}\prec\ldots$ from
$2^{<\omega}$ such that $\limsup_{n\rightarrow\infty}M(\sigma_{n})=\infty$
and $x\in B_{\sigma_{n}}$ for all $n$. Then $x\in\mathcal{X}$ is
betting random if there does not exist some $\{A_{\sigma}\}_{\sigma\in2^{<\omega}}$
and $M$ as above which succeed on $x$.\end{rem}
\begin{lem}
\label{lem:betting-to-mart-on-Cantor}Fix a computable probability
space $(\mathcal{X},\mu)$. For each computable betting strategy there
is a computable probability measure $\nu$ on $2^{\omega}$, a morphism
$T\colon(\mathcal{X},\mu)\rightarrow(2^{\omega},\nu)$, and an a.e.\ computable
martingale $M$ on $(2^{\omega},\nu)$ such that if this betting strategy
succeeds on $x$, then the martingale $M$ succeeds on $T(x)$, and
therefore $T(x)$ is not computably random with respect to $(2^{\omega},\nu)$.\end{lem}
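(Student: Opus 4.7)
The plan is to convert a computable betting strategy directly into a morphism to a Cantor space together with a standard computable martingale, by essentially recording, for each $x$, the bit sequence of wins and losses the strategy produces. Fix a betting strategy presented as in Remark~\ref{rem:betting-strategy}, i.e.\ a computably indexed family of a.e.\ decidable sets $\{A_\sigma\}_{\sigma\in 2^{<\omega}}$ with induced family $\{B_\sigma\}_{\sigma\in 2^{<\omega}}$ and a partial computable martingale $M_0\colon 2^{<\omega}\to[0,\infty)$ satisfying
\[
M_0(\sigma 0)\mu(B_{\sigma 0})+M_0(\sigma 1)\mu(B_{\sigma 1})=M_0(\sigma)\mu(B_\sigma).
\]

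First I would define the map $T\colon \mathcal{X}\to 2^\omega$ by $T(x)=y$ where $y$ is the unique bit string with $x\in B_{y\upharpoonright n}$ for every $n$. Recursively, the $n$-th bit is $1$ or $0$ according to whether $x\in A_{y\upharpoonright n}$ or $x\in\mathcal{X}\smallsetminus A_{y\upharpoonright n}$, which is decidable from any Cauchy-name of $x$ whenever $x$ lies in the a.e.\ decidable pair coding $A_{y\upharpoonright n}$. Hence the domain of $T$ is a $\Pi_2^0$ set of measure one (the complement of the union, over all $\sigma$, of the null open boundary sets for $A_\sigma$), and by Proposition~\ref{prop:partial-comp} $T$ is a.e.\ computable.

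Next, define $\nu\colon 2^{<\omega}\to[0,\infty)$ by $\nu(\sigma)=\mu(B_\sigma)$. Each $B_\sigma$ is a finite Boolean combination of a.e.\ decidable sets, so $\nu(\sigma)$ is computable uniformly from $\sigma$, and the relation $\mu(B_{\sigma 0})+\mu(B_{\sigma 1})=\mu(B_\sigma)$ (which holds a.e., and hence as an equation of measures) makes $\nu$ a computable probability measure on $2^\omega$. Because $T^{-1}([\sigma]^{\prec})=B_\sigma$ a.e.\ by construction, $T$ is measure-preserving, so $T\colon(\mathcal{X},\mu)\to(2^\omega,\nu)$ is a morphism. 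Now set $M(\sigma)=M_0(\sigma)$; the fairness and impossibility conditions on $M_0$ with respect to $\mu(B_\sigma)$ translate verbatim into the martingale conditions on $(2^\omega,\nu)$ from Definition~\ref{def:Martingales-Cantor}, and $M$ is still computable.

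Finally, I would verify success. If the betting strategy succeeds on $x\in\mathcal{X}$, there is a chain $\sigma_0\prec\sigma_1\prec\cdots$ with $x\in B_{\sigma_n}$ and $\limsup M_0(\sigma_n)=\infty$. By definition of $T$, $\sigma_n=T(x)\upharpoonright|\sigma_n|$ for every $n$, so $\limsup_{m\to\infty} M(T(x)\upharpoonright m)=\infty$; thus $M$ succeeds on $T(x)$, and by Definition~\ref{def:comp-random-cantor} the point $T(x)$ is not computably random on $(2^\omega,\nu)$. The main technical nuisance, rather than a deep obstacle, is bookkeeping around the ``impossibility'' clause: on $\sigma$'s with $\mu(B_\sigma)=0$ the betting martingale is undefined, but those $\sigma$'s are never prefixes of $T(x)$ for $x\in\dom(T)$, and the boundary exceptions that keep $T$ from being total form a null $\Sigma_2^0$ set, so they can be safely absorbed into the standing convention that non-Kurtz-random points are non-random anyway.
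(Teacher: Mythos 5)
Your proof is correct and follows essentially the same route as the paper's: define $T(x)=y$ with $x\in B_{y\upharpoonright n}$ for all $n$, set $\nu(\sigma)=\mu(B_\sigma)$, and reuse the betting strategy's capital function as the martingale on $(2^\omega,\nu)$. The paper states this in three sentences; you have simply supplied the routine verifications (a.e.\ computability of $T$, measure preservation, and the handling of null cells) that it leaves implicit.
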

\begin{proof}
Fix a computable betting strategy. Let $M\colon{\subseteq{}}2^{<\omega}\rightarrow[0,\infty)$
and $\{B_{\sigma}\}_{\sigma\in2^{<\omega}}$ be as in Remark~\ref{rem:betting-strategy}.
Then define $(2^{\omega},\nu)$ by $\nu(\sigma)=\mu(B_{\sigma})$.
Also, let $T(x)$ map $x$ to the $y\in2^{\omega}$ such that $x\in B_{y\upharpoonright n}$
for all $n$. Then $T$ is a morphism, $M$ also represents a martingale
on $(2^{\omega},\nu)$, and if the betting strategy succeeds on $x$
then $M$ succeeds on $T(x)$.
\end{proof}
We now have the following characterizations of Martin-Löf randomness.
\begin{cor}
\label{cor:char_of_ML_randomness}For a computable probability space
$(\mathcal{X},\mu)$, the following are equivalent for $x\in\mathcal{X}$.
\begin{enumerate}
\item $x$ is Martin-Löf random.
\item No computable betting strategy succeeds on $x$ (i.e.~$x$ is betting
random).
\item For all isomorphisms $T\colon(\mathcal{X},\mu)\rightarrow(2^{\omega},\nu)$,
$T(x)$ is ``Kol\-mo\-go\-rov-Love\-land random'' on $(2^{\omega},\nu)$
(i.e.\ the randomness from Section~\emph{\ref{sub:Step-1-KL}}).
\item For all morphisms $T\colon(\mathcal{X},\mu)\rightarrow(2^{\omega},\nu)$,
$T(x)$ is computably random with respect to $(2^{\omega},\nu)$.
\end{enumerate}
\end{cor}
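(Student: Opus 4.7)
The plan is to prove the four-way equivalence by closing the cycle $(1)\Leftrightarrow(2)$, $(1)\Rightarrow(3)\Rightarrow(2)$, and $(1)\Rightarrow(4)\Rightarrow(2)$. The first equivalence $(1)\Leftrightarrow(2)$ is immediate from Theorem~\ref{thm:Betting-random-is-ML} applied to $(\mathcal{X},\mu)$, so the real content lies in the two forward implications and the two closing implications.

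For the forward implications I would assume $x$ is Martin-Löf random and let $T\colon(\mathcal{X},\mu)\to(2^{\omega},\nu)$ be a morphism (or an isomorphism in case~(3)); by Proposition~\ref{prop:morphisms_preserve}, $T(x)$ is then Martin-Löf random on $(2^{\omega},\nu)$. For (4) one applies Proposition~\ref{prop:computable-implies-schnorr} to conclude that $T(x)$ is computably random. For (3) one applies Theorem~\ref{thm:Betting-random-is-ML} to the space $(2^{\omega},\nu)$ to conclude that $T(x)$ is betting random, and then observes that a Kolmogorov--Loveland gambler in the sense of Section~\ref{sub:Step-1-KL} is exactly a special case of a computable betting strategy (one whose bets, chosen adaptively, are on the a.e.~decidable cylinder sets $\{y\in 2^{\omega}\mid y(i)=1\}$), so $T(x)$ is automatically KL random.

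For $(4)\Rightarrow(2)$ I argue contrapositively: if $x$ is not betting random, fix a witnessing computable betting strategy and invoke Lemma~\ref{lem:betting-to-mart-on-Cantor}, which supplies a morphism $T\colon(\mathcal{X},\mu)\to(2^{\omega},\nu)$ together with a computable martingale on $(2^{\omega},\nu)$ succeeding on $T(x)$; thus $T(x)$ is not computably random, contradicting (4).

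The step that requires extra care is $(3)\Rightarrow(2)$, because Lemma~\ref{lem:betting-to-mart-on-Cantor} only supplies a morphism, whereas (3) quantifies over isomorphisms. My plan is to fix, once and for all, some cell decomposition $\mathcal{C}$ of $(\mathcal{X},\mu)$ and to \emph{enrich} the witnessing computable betting strategy by interleaving, between each pair of consecutive original bets, a block of \emph{zero-wager} bets on the a.e.~decidable sets coming from $\mathcal{C}$. These extra bets do not change the capital process, so the augmented strategy still succeeds on $x$; however they force the induced family $\{B_{\sigma}\}_{\sigma\in 2^{<\omega}}$ from Remark~\ref{rem:betting-strategy} to refine $\mathcal{C}$, making it satisfy all three conditions of Proposition~\ref{prop:net-to-representation}. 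Thus $\{B_{\sigma}\}$ is a genuine cell decomposition of $(\mathcal{X},\mu)$, the map $T$ produced by Lemma~\ref{lem:betting-to-mart-on-Cantor} coincides with $\name_{\mathcal{B}}$ for $\mathcal{B}=\{B_{\sigma}\}$, and hence by Proposition~\ref{prop:representation-to-isomorphism} it is an isomorphism onto $(2^{\omega},\nu)$. The accompanying computable martingale is a special (monotonic, bit-by-bit) computable betting strategy on $(2^{\omega},\nu)$ and therefore a particular instance of a KL gambler, so it witnesses that $T(x)$ fails to be Kolmogorov--Loveland random on $(2^{\omega},\nu)$, contradicting (3). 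The main obstacle I anticipate is the bookkeeping for this enrichment---verifying that the augmented strategy remains computable and that $\{B_{\sigma}\}$ really generates the Borel $\sigma$-algebra up to $\mu$-null sets---but this is routine given the machinery developed in Sections~\ref{sec:Almost-everywhere-decidable}--\ref{sec:CR-and-isomorphisms}.
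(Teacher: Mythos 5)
Your proposal is correct, and for three of the four nontrivial implications it coincides with the paper's proof: $(1)\Leftrightarrow(2)$ is Theorem~\ref{thm:Betting-random-is-ML}, $(1)\Rightarrow(3),(4)$ follows from preservation of Martin-L\"of randomness under morphisms (Proposition~\ref{prop:morphisms_preserve}) together with the standard implications on $(2^{\omega},\nu)$, and $(4)\Rightarrow(2)$ is exactly the contrapositive via Lemma~\ref{lem:betting-to-mart-on-Cantor}. The one place you genuinely diverge is $(3)\Rightarrow(2)$. The paper disposes of this step by appealing to the general framework of Section~\ref{sec:Generalizing-randomness}: betting randomness is by construction method~(1) of Step~2 applied to Kolmogorov--Loveland randomness, and method~(3) of Step~2 is precisely statement~(3), so the two are equivalent by the (somewhat informal) equivalence of the three methods. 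It then remarks, without details, that ``an alternate proof would be to modify Lemma~\ref{lem:betting-to-mart-on-Cantor}'' --- and that alternate proof is exactly what you carry out: interleaving zero-wager bets on a fixed generator so that the induced family $\{B_{\sigma}\}$ satisfies Proposition~\ref{prop:net-to-representation}, whence the map of Lemma~\ref{lem:betting-to-mart-on-Cantor} is $\name_{\mathcal{B}}$ and hence an isomorphism by Proposition~\ref{prop:representation-to-isomorphism}, while the monotonic martingale on $(2^{\omega},\nu)$ is a special case of a KL gambler. Your version is longer and requires the bookkeeping you flag (checking that zero-wager bets on possibly null sets do not violate the impossibility condition, and that the refined cells still enumerate every $\Sigma_{1}^{0}$ set a.e.), but it is self-contained and makes rigorous a step the paper delegates to its general, partly heuristic, three-method equivalence; the paper's version is shorter but inherits whatever informality is present in Step~2 of Section~\ref{sec:Generalizing-randomness}. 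Both are sound.
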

\begin{proof}
The equivalence of (1) and (2) is Theorem~\ref{thm:Betting-random-is-ML}.
(1) implies both (3) and (4) since morphisms preserve Martin-Löf randomness
(Proposition~\ref{prop:morphisms_preserve}). 

(4) implies (2): Use Lemma~\ref{lem:betting-to-mart-on-Cantor}.
Assume $x$ is not betting random. Then there is some morphism $T$
such that $T(x)$ is not computably random.

(3) implies (2): Recall that the definition of betting randomness
came from applying the method of Section~\ref{sec:Generalizing-randomness}
to Kol\-mo\-go\-rov-Love\-land randomness. By method~(3) of Step~2
in Section~\ref{sec:Generalizing-randomness}, $x$ is betting random
if and only if (3) holds. (An alternate proof would be to modify Lemma~\ref{lem:betting-to-mart-on-Cantor}.)\end{proof}
\begin{cor}
\label{cor:comp-rand-morphisms}Computable randomness is not preserved
under morphisms. (See comments after Proposition~\emph{\ref{prop:morphisms_preserve}}.)\end{cor}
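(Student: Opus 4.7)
The plan is to derive this as a contrapositive of Corollary~\ref{cor:char_of_ML_randomness}\,(4) combined with the classical separation between computable and Martin-L�f randomness on $(2^{\omega},\lambda)$. Specifically, Corollary~\ref{cor:char_of_ML_randomness}\,(4) says that $x$ is Martin-L�f random if and only if $T(x)$ is computably random for every morphism $T\colon(\mathcal{X},\mu)\to(2^{\omega},\nu)$. So if computable randomness were preserved under all morphisms, then every computably random point would be Martin-L�f random (take $T$ to be the identity on $(2^{\omega},\lambda)$, or more generally start with any computably random $x$ and push it through morphisms).

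First I would invoke the well-known fact (see Downey and Hirschfeldt \cite[Section~7.1]{Downey2010} or Nies \cite[Chapter~7]{Nies2009}) that there exists a point $x\in(2^{\omega},\lambda)$ which is computably random but not Martin-L�f random. Next, I would assume for contradiction that morphisms preserve computable randomness. Then for this $x$ and every morphism $T\colon(2^{\omega},\lambda)\to(2^{\omega},\nu)$, the image $T(x)$ is computably random. By Corollary~\ref{cor:char_of_ML_randomness}\,(4), this forces $x$ to be Martin-L�f random, contradicting the choice of $x$.

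Alternatively---and perhaps more instructively---one can exhibit an explicit witness using Lemma~\ref{lem:betting-to-mart-on-Cantor}. Pick $x\in(2^{\omega},\lambda)$ which is computably random but not Martin-L�f random. By Theorem~\ref{thm:Betting-random-is-ML}, $x$ is not betting random, so there is a computable betting strategy succeeding on $x$. Lemma~\ref{lem:betting-to-mart-on-Cantor} then produces a computable probability measure $\nu$ on $2^{\omega}$, a morphism $T\colon(2^{\omega},\lambda)\to(2^{\omega},\nu)$, and a computable martingale on $(2^{\omega},\nu)$ that succeeds on $T(x)$, showing $T(x)$ is not computably random. Hence the computably random point $x$ is sent by a morphism to a non-computably-random point.

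There is no real obstacle here: the work has already been done in the preceding corollary and lemma. The only point to be careful about is citing the classical separation between computable and Martin-L�f randomness on $(2^{\omega},\lambda)$; the rest is a one-line application. I would prefer the second formulation, since it makes the failure concrete rather than merely abstract.
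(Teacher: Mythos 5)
Your proposal is correct and follows essentially the same route as the paper: the paper's proof also takes a point $x\in(2^{\omega},\lambda)$ that is computably random but not Martin-L\"of random and applies Corollary~\ref{cor:char_of_ML_randomness} to produce a morphism $T$ with $T(x)$ not computably random. Your second, ``explicit witness'' formulation merely unfolds the proof of that corollary (via Theorem~\ref{thm:Betting-random-is-ML} and Lemma~\ref{lem:betting-to-mart-on-Cantor}), so it is the same argument in expanded form.
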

\begin{proof}
It is well-known that there is an $x\in2^{\omega}$ which is computably
random with respect to $(2^{\omega},\lambda)$ but not Martin-Löf
random (see \cite{Downey2010,Nies2009}). Then by Corollary~\ref{cor:char_of_ML_randomness},
there is some morphism $T$ such that $T(x)$ is not computably random.
\end{proof}
Corollary~\ref{cor:comp-rand-morphisms} was also proved by Bienvenu
and Porter \cite{BienvenuSubmitted}.

\subsection{Step 3: Is the new definition consistent with the former?}

To show that Martin-Löf randomness equals Kol\-mo\-go\-rov-Love\-land
randomness on $(2^{\omega},\lambda)$, it is sufficient to affirmatively
answer the following question.
\begin{question}
In Step 1 (Section~\emph{\ref{sub:Step-1-KL}}), Kol\-mo\-go\-rov-Love\-land
randomness was extended to other computable probability measures $\mu$
on $2^{\mathbb{\omega}}$. Is this extension of Kol\-mo\-go\-rov-Love\-land
randomness preserved under isomorphisms between computable probability
measures on $2^{\omega}$?
\end{question}
This is related to the question of whether Kol\-mo\-go\-rov-Love\-land
randomness with respect to $(2^{\omega},\lambda)$ is base invariant
(see Examples~\ref{ex:Base-inv-1} and \ref{ex:Base-inv-2}), which
is an open question.

\section{Endomorphism randomness\label{sec:Endomorphism-randomness}}

The generalization of Kol\-mo\-go\-rov-Love\-land randomness given
in the last section was, in some respects, not very satisfying. In
particular, the definition of Kol\-mo\-go\-rov-Love\-land randomness
with respect to $(2^{\omega},\lambda)$ assumes each event being bet
on is independent of all the previous events, and further has conditional
probability $1/2$. Therefore, at the ``end'' of the gambling session,
regardless of how much the gambler has won or lost, he knows what
$x$ is up to a measure-zero set (where $x$ is the sequence being
bet on). This is in contrast to the universal betting strategy given
in the proof of Theorem~\ref{thm:Betting-random-is-ML} (see Remark~\ref{rem:universal-strategy}),
which only narrows $x$ down to a positive measure set when $x$ is
Martin-Löf random.

In this section, I now give a new type of randomness which behaves
more like Kol\-mo\-go\-rov-Love\-land randomness. This randomness
notion can be defined using both morphisms and betting strategies.
\begin{defn}
Let $(\mathcal{X},\mu)$ be a computable probability space. An \noun{endomorphism}
on $(\mathcal{X},\mu)$ is a morphism from $(\mathcal{X},\mu)$ to
itself. Say $x\in\mathcal{X}$ is \noun{endomorphism random} if for
all endomorphisms $T\colon(\mathcal{X},\mu)\rightarrow(\mathcal{X},\mu)$,
we have that $T(x)$ is computably random.
\end{defn}
Notice the above definition is the same as that given in Corollary~\ref{cor:char_of_ML_randomness}\,(4),
except that ``morphism'' is replaced with ``endomorphism''.

If the space is atomless, we have an alternate characterization.
\begin{prop}
\label{prop:Endo-random-by-morphisms}Let $(\mathcal{X},\mu)$ be
a computable probability space with no atoms. Then $x\in\mathcal{X}$
is endomorphism random if and only if for all morphisms $T\colon(\mathcal{X},\mu)\rightarrow(2^{\omega},\lambda)$,
$T(x)$ is computably random.\end{prop}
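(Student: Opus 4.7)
The plan is to reduce both directions to the atomless isomorphism theorem (Theorem~\ref{thm:atomless-isomorphic}), which supplies an a.e.\ computable isomorphism $S\colon(\mathcal{X},\mu)\to(2^{\omega},\lambda)$, together with the fact that isomorphisms preserve computable randomness (Theorem~\ref{thm:isomorphisms_preserve}). Fix such an $S$ at the start, with a.e.\ inverse $S^{-1}\colon(2^{\omega},\lambda)\to(\mathcal{X},\mu)$.

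First I would prove the forward direction. Assume $x$ is endomorphism random and let $T\colon(\mathcal{X},\mu)\to(2^{\omega},\lambda)$ be an arbitrary morphism. Form the composition $R=S^{-1}\circ T\colon(\mathcal{X},\mu)\to(\mathcal{X},\mu)$; this is an a.e.\ computable, measure-preserving map, hence an endomorphism. By hypothesis $R(x)$ is computably random in $(\mathcal{X},\mu)$. Applying the isomorphism $S$ and invoking Theorem~\ref{thm:isomorphisms_preserve}, $S(R(x))=T(x)$ is computably random in $(2^{\omega},\lambda)$, which is the desired conclusion.

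For the converse, assume $T(x)$ is computably random for every morphism $T\colon(\mathcal{X},\mu)\to(2^{\omega},\lambda)$, and let $U\colon(\mathcal{X},\mu)\to(\mathcal{X},\mu)$ be an arbitrary endomorphism. Then $S\circ U\colon(\mathcal{X},\mu)\to(2^{\omega},\lambda)$ is a morphism, so by hypothesis $(S\circ U)(x)=S(U(x))$ is computably random in $(2^{\omega},\lambda)$. Since $S$ is an isomorphism, Theorem~\ref{thm:isomorphisms_preserve} in the reverse direction yields that $U(x)$ is computably random in $(\mathcal{X},\mu)$. As $U$ was arbitrary, $x$ is endomorphism random.

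The only subtlety is bookkeeping for the a.e.\ defined compositions: $R$ and $S\circ U$ are only defined on a measure-one $\Pi_{2}^{0}$ set, and the equalities $S\circ S^{-1}=\mathrm{id}$ and $S^{-1}\circ S=\mathrm{id}$ hold only a.e.\ However, Kurtz randoms lie in the domain of every a.e.\ computable map (by the corollary after Proposition~\ref{prop:partial-comp}), and every computably random point is Kurtz random by Proposition~\ref{prop:computable-implies-schnorr}. So, provided $x$ is computably random (which we may assume throughout, since otherwise both sides of the equivalence trivially fail once one unpacks endomorphism/morphism randomness as applied to the identity map), every composition above is defined at $x$ and the chain of equalities is literal. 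This is the only ``obstacle,'' and it is handled uniformly by the existing framework.
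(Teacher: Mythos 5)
Your proof is correct and takes exactly the route the paper intends: the paper's own proof is the one-line instruction to combine Theorem~\ref{thm:atomless-isomorphic} with Theorem~\ref{thm:isomorphisms_preserve}, and your argument (pre/post-composing with a fixed isomorphism $S$ to convert morphisms into endomorphisms and vice versa) is the natural expansion of that, with the a.e.-domain bookkeeping handled appropriately via Kurtz randomness.
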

\begin{proof}
Use that there is an isomorphism from $(\mathcal{X},\mu)$ to $(2^{\omega},\lambda)$
(Theorem~\ref{prop:atomless-isomorphic}) and that isomorphisms preserve
computable randomness (Theorem~\ref{thm:isomorphisms_preserve}).
\end{proof}
Also, we can define endomorphism randomness using computable betting
strategies as in the previous section. 
\begin{defn}
Let $(\mathcal{X},\mu)$ be an atomless computable probability space.
Consider a computable betting strategy $\mathcal{B}$. Let $\{A_{\sigma}\}_{\sigma\in2^{<\omega}},\{B_{\sigma}\}_{\sigma\in2^{<\omega}}$
be as in Remark~\ref{rem:betting-strategy}. Call the betting strategy
$\mathcal{B}$ \noun{balanced} if it only bets on events with conditional
probability $\frac{1}{2}$, conditioned on $B_{\sigma}$ (the information
known by the gambler at after the wins/losses given by $\sigma$).
In other words, $\mu(A_{\sigma}\mid B_{\sigma})=1/2$. Call the betting
strategy $\mathcal{B}$ \noun{exhaustive} if $\mu(B_{\sigma_{n}})\rightarrow0$
for any strictly increasing chain $\sigma_{0}\prec\sigma_{1}\prec\ldots$.
In other words the measure of the information known about $x$ approaches
$0$.\end{defn}
\begin{thm}
\label{thm:Endo_char}Let $(\mathcal{X},\mu)$ be an atomless computable
probability space and $x\in\mathcal{X}$. The following are equivalent.
\begin{enumerate}
\item $x$ is endomorphism random.
\item There does not exist a balanced computable betting strategy which
succeeds on $x$.
\item There does not exist an exhaustive computable betting strategy which
succeeds on $x$.
\end{enumerate}
\end{thm}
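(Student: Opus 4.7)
The plan is to set up a correspondence between computable betting strategies on $(\mathcal{X},\mu)$ and a.e.\ computable morphisms $T\colon(\mathcal{X},\mu)\to(2^\omega,\kappa)$, where $\kappa=\lambda$ when the strategy is balanced and $\kappa$ is atomless when the strategy is exhaustive. Theorems~\ref{thm:atomless-isomorphic} and \ref{thm:isomorphisms_preserve} together with Proposition~\ref{prop:Endo-random-by-morphisms} will then reduce both characterizations to the morphism definition of endomorphism randomness.

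For (1) $\Leftrightarrow$ (2), given a morphism $T\colon(\mathcal{X},\mu)\to(2^\omega,\lambda)$, I would set $A_\sigma=T^{-1}(\{y\in 2^\omega:y(|\sigma|)=1\})$. By Remark~\ref{rem:preimage} these are uniformly a.e.\ decidable, and the associated cells of Remark~\ref{rem:betting-strategy} satisfy $B_\sigma=T^{-1}([\sigma]^{\prec})$ a.e., so $\mu(B_\sigma)=2^{-|\sigma|}$ and $\mu(A_\sigma\mid B_\sigma)=1/2$, making the strategy balanced. Conversely, a balanced strategy has $\mu(B_\sigma)=2^{-|\sigma|}$ by induction, and the map sending $x$ to the unique $y\in 2^\omega$ with $x\in B_{y\upharpoonright n}$ for all $n$ is an a.e.\ computable morphism to $(2^\omega,\lambda)$. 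Under either direction, a betting-strategy martingale $M$ is literally a computable martingale on $(2^\omega,\lambda)$, and the strategy succeeds on $x$ via the chain $\sigma_n=T(x)\upharpoonright n$ if and only if $M$ succeeds on $T(x)$. Combining this with Proposition~\ref{prop:Endo-random-by-morphisms} yields (1) $\Leftrightarrow$ (2).

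For (2) $\Leftrightarrow$ (3), a balanced strategy is automatically exhaustive, since any strictly increasing chain $\sigma_0\prec\sigma_1\prec\cdots$ has $|\sigma_n|\to\infty$ and hence $\mu(B_{\sigma_n})=2^{-|\sigma_n|}\to 0$; this gives (3) $\Rightarrow$ (2). For (2) $\Rightarrow$ (3), suppose an exhaustive strategy with martingale $M$ succeeds on $x$. Define $\kappa(\sigma)=\mu(B_\sigma)$; this is a computable probability measure on $2^\omega$, and exhaustiveness says $\kappa(\{y\})=\inf_n\kappa(y\upharpoonright n)=0$ for every $y$, so $\kappa$ is atomless. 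The same cell-tracking map gives an a.e.\ computable morphism $T\colon(\mathcal{X},\mu)\to(2^\omega,\kappa)$, and $M$ is now a computable martingale on $(2^\omega,\kappa)$ succeeding on $T(x)$, so $T(x)$ is not computably random there. Applying Theorem~\ref{thm:atomless-isomorphic} gives an isomorphism $S\colon(2^\omega,\kappa)\to(2^\omega,\lambda)$, and Theorem~\ref{thm:isomorphisms_preserve} ensures $S(T(x))$ fails to be computably random on $(2^\omega,\lambda)$; running the (1) $\Leftrightarrow$ (2) correspondence for the morphism $S\circ T$ then produces a balanced strategy succeeding on $x$.

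The main obstacle I anticipate is the bookkeeping around null cells and the $\Pi_2^0$ domains of the maps $T$ built from strategies: I need to verify that $T$ really is a morphism in the sense of the paper (partial-computable with a measure-one $\Pi_2^0$ domain, measure-preserving on every Borel set), not merely a measure-preserving map defined almost everywhere, and that a success-witnessing chain $\sigma_n$ in cells of positive measure actually converges to a bona fide point $T(x)$ in the image. The impossibility condition of Remark~\ref{rem:betting-strategy}, together with the observation that points in null cells or without a unique image form a null $\Sigma_2^0$ set (so they are not even Kurtz random and can be safely excluded), should let me dispose of these exceptional points uniformly without affecting the main argument.
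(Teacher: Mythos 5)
Your proposal is correct and follows essentially the same route as the paper: balanced strategies are exactly the pullbacks of computable martingales along morphisms to $(2^{\omega},\lambda)$, and an exhaustive strategy yields an atomless image measure on $2^{\omega}$ that can then be carried to $(2^{\omega},\lambda)$. The only cosmetic difference is that where you invoke Theorem~\ref{thm:atomless-isomorphic} and Theorem~\ref{thm:isomorphisms_preserve} to pass from $(2^{\omega},\kappa)$ to $(2^{\omega},\lambda)$, the paper builds the requisite morphism by hand, sending each cell $B_{\sigma}$ to an interval of $([0,1],\lambda)$ of length $\mu(B_{\sigma})$.
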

\begin{proof}
(3) implies (2) since balanced betting strategies are exhaustive.
For (2) implies (1), assume $x$ is not endomorphism random. Then
there is some morphism $T\colon(\mathcal{X},\mu)\rightarrow(2^{\omega},\lambda)$
such that $T(x)$ is not computably random. Hence there is a computable
martingale $M$ which succeeds on $T(x)$. We can also assume this
martingale is rational valued (that is $M$ is a computable function
of type $M\colon2^{<\omega}\rightarrow\mathbb{Q}$ where $\mathbb{Q}$
is identified with $\mathbb{N}$), so it is clear what bit is being
bet on. This martingale on $(2^{\omega},\lambda)$ can be pulled back
to a computable betting strategy on $(\mathcal{X},\mu)$ (use the
proof of Lemma~\ref{lem:betting-to-mart-on-Cantor}, except in reverse).
This betting strategy is balanced since $M$ is a balanced ``dyadic''
martingale.

For (1) implies (3), assume there is some computable, exhaustive betting
strategy which succeeds on $x$. Then from this strategy we can construct
a morphism $S\colon(\mathcal{X},\mu)\rightarrow([0,1],\lambda)$ recursively
as follows. Each $B_{\sigma}$ will be mapped to an open interval
$(a,b)$ of length $\mu(B_{\sigma})$. First, map $S(B_{\varepsilon})=(0,1)$.
For the recursion step, assume $S(B_{\sigma})=(a,b)$ of length $\mu(B_{\sigma})$.
Set $S(B_{\sigma0})=(a,a+\mu(B_{\sigma0}))$ and $S(B_{\sigma1})=(a+\mu(B_{\sigma0}),b)$.
This function $S$ is well-defined and computable since the betting
strategy is exhaustive. Also, $S$ is clearly measure-preserving,
so it is a morphism. Then using the usual isomorphism from $([0,1],\lambda)$
to $(2^{\omega},\lambda)$, we can assume $S$ is a morphism to $(2^{\omega},\lambda)$.
Moreover, the set of images $S(B_{\sigma})$ describes a cell decomposition
$\mathcal{A}$ of $(2^{\omega},\lambda)$, and the betting strategy
can be pushed forward to give a martingale on $(2^{\omega},\lambda)$
with respect to $\mathcal{A}$ (similar to the proof of Lemma~\ref{lem:betting-to-mart-on-Cantor}).
\end{proof}
Now we can relate endomorphism randomness to Kol\-mo\-go\-rov-Love\-land
randomness.
\begin{cor}
\label{cor:ER_KL}On $(2^{\omega},\lambda)$, every endomorphism random
is Kol\-mo\-go\-rov-Love\-land random.\end{cor}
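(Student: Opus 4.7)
The plan is to deduce this directly from the betting-theoretic characterization of endomorphism randomness in Theorem~\ref{thm:Endo_char}. More specifically, I will show the contrapositive: any computable nonmonotonic betting strategy $\mathcal{S}$ on $(2^{\omega},\lambda)$ that succeeds on $x$ can be repackaged as a \emph{balanced} computable betting strategy on $(2^{\omega},\lambda)$ (in the sense of Section~\ref{sec:Endomorphism-randomness}) that succeeds on the same $x$. Then Theorem~\ref{thm:Endo_char}(2) forces $x$ not to be endomorphism random.

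First I will fix a convention so that a ``win'' records ``the queried bit is $1$'' and a ``loss'' records ``the queried bit is $0$'' (wagers of $0$ are allowed and handle a KL strategy that passes). Under this convention, the sequence of wins/losses the KL gambler observes while playing against $x$ is exactly the sequence of bit values she reads, indexed in the order she queries them. Given $\sigma\in 2^{<\omega}$ encoding such a history, let $i_{|\sigma|+1}\in\mathbb{N}$ be the bit position the KL strategy queries next; since $\mathcal{S}$ is computable, the map $\sigma\mapsto i_{|\sigma|+1}$ is computable. Define the indexed family of decidable (hence a.e.\ decidable) sets
\[
A_\sigma=\{y\in 2^{\omega}\mid y(i_{|\sigma|+1})=1\},
\]
and let $\{B_\sigma\}$ and $M$ be the associated information sets and capital function as in Remark~\ref{rem:betting-strategy}, with wagers copied from $\mathcal{S}$.

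The key observation is that for the fair-coin measure $\lambda$, bits are independent, so after conditioning on any finite collection of bit values the conditional probability of any \emph{other} single bit being $1$ is still $1/2$. Since the KL gambler may always be taken (without loss of generality, by \cite{Merkle2003} or just by skipping redundant moves) not to re-query a bit she has already seen, we get
\[
\lambda(A_\sigma\mid B_\sigma)=\tfrac12
\]
whenever $\lambda(B_\sigma)>0$, so the constructed strategy is balanced. Moreover, it clearly has the same capital trajectory as $\mathcal{S}$ along the history produced by $x$: the chain $\sigma_0\prec\sigma_1\prec\cdots$ defined by the successive win/loss outcomes satisfies $x\in B_{\sigma_n}$ for all $n$ and $\limsup_n M(\sigma_n)=\infty$. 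Hence this balanced computable betting strategy succeeds on $x$.

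By Theorem~\ref{thm:Endo_char}, $x$ is not endomorphism random, completing the contrapositive. No serious obstacle is expected; the only thing to be careful about is bookkeeping the identification between ``wins/losses'' in the general betting formalism and ``bit values read'' in the KL formalism, and handling the (harmless) case where $\mathcal{S}$ passes or stops.
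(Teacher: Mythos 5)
Your proposal is correct and is essentially the paper's own argument: the paper's entire proof is the one-line observation that every nonmonotonic computable betting strategy on bits is a balanced betting strategy (plus Theorem~\ref{thm:Endo_char}), and your write-up simply spells out that observation in the formalism of Remark~\ref{rem:betting-strategy}, using independence of bits under $\lambda$ to get $\lambda(A_\sigma\mid B_\sigma)=\tfrac12$. The extra bookkeeping about win/loss conventions and non-re-querying is a faithful elaboration of the same idea, not a different route.
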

\begin{proof}
Every nonmonotonic, computable betting strategy on bits is a balanced
betting strategy. Hence every endomorphism random is Kol\-mo\-go\-rov-Love\-land
random.\end{proof}
\begin{cor}
Let $(\mathcal{X},\mu)$ be a computable probability space with no
atoms. Then $x\in\mathcal{X}$ is endomorphism random with respect
to $(\mathcal{X},\mu)$ if and only if for all morphisms $T\colon(\mathcal{X},\mu)\rightarrow(2^{\omega},\lambda)$,
$T(x)$ is Kol\-mo\-go\-rov-Love\-land random with respect to
$(2^{\omega},\lambda)$.\end{cor}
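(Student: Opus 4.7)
The plan is to prove the two directions separately. The $(\Leftarrow)$ direction will fall out almost immediately from Proposition~\ref{prop:Endo-random-by-morphisms}, whereas the $(\Rightarrow)$ direction carries the real content and amounts to pulling back a witnessing nonmonotonic betting strategy along the given morphism. The pull-back is the only step with any technical substance, and even there I do not expect a genuine obstacle, only bookkeeping around the partial-computability of $T$.

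For $(\Leftarrow)$ I would argue as follows. Assume $T(x)$ is Kolmogorov-Loveland random for every morphism $T\colon(\mathcal{X},\mu)\to(2^{\omega},\lambda)$. A computable monotonic martingale on $(2^{\omega},\lambda)$ is the special case of a computable nonmonotonic bit-betting strategy that always queries bits in increasing order, so every KL-random point of $(2^{\omega},\lambda)$ is in particular computably random. Hence each such $T(x)$ is computably random, and Proposition~\ref{prop:Endo-random-by-morphisms} delivers that $x$ is endomorphism random.

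For $(\Rightarrow)$ I would argue by contrapositive, generalizing the proof of Corollary~\ref{cor:ER_KL} (which is the special case $\mathcal{X}=(2^{\omega},\lambda)$ with $T$ the identity) to an arbitrary morphism $T$. Suppose $T\colon(\mathcal{X},\mu)\to(2^{\omega},\lambda)$ is a morphism and $T(x)$ fails to be KL-random, witnessed by a computable nonmonotonic bit-betting strategy $\mathcal{S}$. The plan is to pull $\mathcal{S}$ back along $T$ into a balanced computable betting strategy $\mathcal{S}'$ on $(\mathcal{X},\mu)$ that succeeds on $x$, and then invoke Theorem~\ref{thm:Endo_char}\,(2) to conclude that $x$ is not endomorphism random.

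The strategy $\mathcal{S}'$ will shadow $\mathcal{S}$: whenever $\mathcal{S}$ selects a coordinate $n$ and a wager, $\mathcal{S}'$ makes the same wager on the a.e.\ decidable set $T^{-1}(\{y:y(n)=1\})$, which is a.e.\ decidable uniformly in $n$ and the code of $T$ by Remark~\ref{rem:preimage}. Because $T$ is measure-preserving, the conditional $\mu$-measure of this set given the past observations equals the corresponding conditional $\lambda$-measure of $\{y:y(n)=1\}$, namely $1/2$, so $\mathcal{S}'$ is balanced. Since $T$ is deterministic on its full-measure domain, the win/loss sequence of $\mathcal{S}'$ on $x$ coincides with that of $\mathcal{S}$ on $T(x)$; points outside $\dom(T)$ lie in a null $\Sigma_{2}^{0}$ set and are therefore not Kurtz random, let alone endomorphism random, so they can be safely ignored. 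Thus $\mathcal{S}'$ is a balanced computable betting strategy that succeeds on $x$, and Theorem~\ref{thm:Endo_char}\,(2) completes the contrapositive.
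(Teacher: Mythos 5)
Your proof is correct, and the backward direction is exactly the paper's: KL-randomness implies computable randomness on $(2^{\omega},\lambda)$ because a monotonic martingale is a special nonmonotonic strategy, and then Proposition~\ref{prop:Endo-random-by-morphisms} finishes. The forward direction, however, is argued differently. The paper never pulls a betting strategy back along $T$; instead it observes that if $x$ is endomorphism random then $T(x)$ is endomorphism random on $(2^{\omega},\lambda)$ (because post-composing $T$ with any endomorphism of $(2^{\omega},\lambda)$ yields another morphism $(\mathcal{X},\mu)\rightarrow(2^{\omega},\lambda)$, so Proposition~\ref{prop:Endo-random-by-morphisms} applies), and then invokes Corollary~\ref{cor:ER_KL} as a black box to pass from endomorphism randomness to KL-randomness on Cantor space. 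You instead re-run the proof of Corollary~\ref{cor:ER_KL} ``relative to $T$'': you transport the witnessing nonmonotonic strategy from $(2^{\omega},\lambda)$ back to $(\mathcal{X},\mu)$ as a balanced strategy and apply Theorem~\ref{thm:Endo_char}\,(2) directly. Your pull-back is sound --- the preimages $T^{-1}(\{y\mid y(n)=1\})$ are a.e.\ decidable uniformly by Remark~\ref{rem:preimage}, measure preservation gives the conditional probability $1/2$ so the strategy is balanced, and the null $\Sigma_{2}^{0}$ set outside $\dom(T)$ is harmless --- but it duplicates work the paper has already packaged. What your route buys is a self-contained, concrete argument that makes visible exactly where balancedness comes from; what the paper's route buys is brevity and a cleaner reuse of the Cantor-space case.
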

\begin{proof}
If $x$ is endomorphism random with respect to $(\mathcal{X},\mu)$,
then so is $T(x)$ with respect to $(2^{\omega},\lambda)$. By Corollary~\ref{cor:ER_KL},
$T(x)$ is Kol\-mo\-go\-rov-Love\-land random with respect to
$(2^{\omega},\lambda)$. If $T(x)$ is Kol\-mo\-go\-rov-Love\-land
random with respect to $(2^{\omega},\lambda)$ for all morphisms $T\colon(\mathcal{X},\mu)\rightarrow(2^{\omega},\lambda)$,
then $T(x)$ is computably random with respect to $(2^{\omega},\lambda)$
for all such $T$. Therefore, $x$ is endomorphism random with respect
to $(\mathcal{X},\mu)$.\end{proof}
\begin{cor}
\label{cor:ER_KL_CR}Computable randomness is not preserved by endomorphisms.\end{cor}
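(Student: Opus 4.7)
The plan is to combine Corollary~\ref{cor:ER_KL} with the classical fact, alluded to at the beginning of Section~\ref{sec:Betting-strategies-and-KL}, that on $(2^{\omega},\lambda)$ Kolmogorov-Loveland randomness is a \emph{strictly} stronger notion than computable randomness. Since every nonmonotonic computable betting strategy on bits of $2^{\omega}$ is, in particular, a balanced computable betting strategy in the sense of Theorem~\ref{thm:Endo_char}, any point defeated by a Kolmogorov-Loveland strategy is also defeated by a balanced strategy, and hence by an endomorphism. In other words, endomorphism randomness on $(2^{\omega},\lambda)$ implies Kolmogorov-Loveland randomness.

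I would then fix any $x \in 2^{\omega}$ that is computably random with respect to the fair-coin measure $\lambda$ but fails to be Kolmogorov-Loveland random; the existence of such an $x$ is standard (see for example \cite[Chapter~7]{Downey2010}). By the previous paragraph, this $x$ is not endomorphism random, so by unfolding the definition there exists an endomorphism $T\colon(2^{\omega},\lambda)\to(2^{\omega},\lambda)$ for which $T(x)$ is not computably random. Since $x$ itself is computably random, the pair $(x,T)$ shows that computable randomness is not preserved by endomorphisms of $(2^{\omega},\lambda)$, which is exactly what is claimed.

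The main—and essentially only—obstacle is invoking the nontrivial strict separation between computable and Kolmogorov-Loveland randomness; I treat this as a black box from the literature. If one preferred a more self-contained derivation, one could instead start from Corollary~\ref{cor:comp-rand-morphisms}: take a morphism $S\colon(2^{\omega},\lambda)\to(2^{\omega},\nu)$ sending some computably random $x$ to a non-computably-random point, pull back the associated computable martingale to a computable betting strategy on $(2^{\omega},\lambda)$ as in Lemma~\ref{lem:betting-to-mart-on-Cantor}, and then convert it into a balanced strategy and finally into an actual endomorphism of $(2^{\omega},\lambda)$ by repeating the construction in the proof of $(1)\Rightarrow(3)$ of Theorem~\ref{thm:Endo_char}. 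The route through Kolmogorov-Loveland randomness, however, is much shorter.
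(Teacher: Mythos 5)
Your argument is correct and is essentially identical to the paper's own proof: both take a computably random $x\in(2^{\omega},\lambda)$ that is not Kolmogorov-Loveland random, apply Corollary~\ref{cor:ER_KL} to conclude $x$ is not endomorphism random, and unfold the definition to obtain an endomorphism destroying computable randomness. The alternative route you sketch via Corollary~\ref{cor:comp-rand-morphisms} is not needed; the short route is the one the paper uses.
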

\begin{proof}
It is well-known that there exists a computable random $x\in(2^{\omega},\lambda)$
which is not Kol\-mo\-go\-rov-Love\-land random (see \cite{Downey2010,Nies2009}).
Then $x$ is not endomorphism random with respect to $(2^{\omega},\lambda)$.
By definition, $T(x)$ is not computably random for some morphism
$T\colon(2^{\omega},\lambda)\rightarrow(2^{\omega},\lambda)$.
\end{proof}
Also, clearly each betting random (i.e.~each Martin-Löf random) is
an endomorphism random.

I will add one more randomness notion. Say $x\in2^{\omega}$ is \noun{automorphism random}
(with respect to $(2^{\omega},\lambda)$) if for all automorphisms
$T\colon(2^{\omega},\lambda)\rightarrow(2^{\omega},\lambda)$, $T(x)$
is Kol\-mo\-go\-rov-Love\-land random. It is clear that for $(2^{\omega},\lambda)$
we have
\begin{multline}
\text{Martin-Löf random}\ \rightarrow\ \text{Endomorphism random}\\
\rightarrow\ \text{Automorphism random}\ \rightarrow\ \text{Kolmogorov-Loveland random}.\label{eq:ran-hierarchy-2}
\end{multline}
We now have a more refined version of Question~\ref{open:KL_equals_ML}.
\begin{question}
Do any of the implications in formula~\emph{(\ref{eq:ran-hierarchy-2})}
reverse?\footnote{Recently, and independently of my work, Tomislav Petrovic has claimed
that there are two balanced betting strategies on $(2^{\omega},\lambda)$
such that if a real $x$ is not Martin-Löf random, then at least one
of the two strategies succeeds on $x$. In particular, Petrovic's
result, which is in preparation, would imply that endomorphism randomness
equals Martin-Löf randomness. Further, via the proof of Theorem~\ref{thm:Endo_char},
this result would extend to every atomless computable probability
space.}
\end{question}

\section{Further directions\label{sec:Further-directions}}

Throughout this paper I was working with a.e.~computable objects:
a.e.~decidable sets, a.e.~decidable cell decompositions, a.e.~computable
morphisms, and Kurtz randomness---which as I showed, can be defined
by a.e.~computability. Recall a.e.~decidable sets are only sets
of $\mu$-continuity, and a.e.~computable morphisms are only a.e.~continuous
maps.

The ``next level'' is to consider the computable Polish spaces of
measurable sets and measurable maps. The a.e.~decidable sets and
a.e.~computable maps are dense in these spaces. Hence, in the definitions,
one may replace a.e.~decidable sets, a.e.~decidable cell decompositions,
a.e.~computable morphisms, and Kurtz randomness with effectively
measurable sets, decompositions into effectively measurable cells,
effectively measurable measure-preserving maps, and Schnorr randomness.
(This is closely related to the work of Pathak, Simpson and Rojas
\cite{Pathak:2014fk}; Miyabe \cite{Miyabe:2013uq}; Hoyrup and Rojas
{[}personal communication{]}; and the author on ``Schnorr layerwise-computability''
and convergence for Schnorr randomness.) Indeed, the results of this
paper remain true, even with those changes. However, some proofs change
and I will give the results in a later paper.

An even more general extension would be to ignore the metric space
structure all together. Any standard probability space space can be
described uniquely by the measures of an intersection-closed class
of sets, or a $\pi$-system, which generates the measure algebra of
the measure. From this, one can obtain a cell decomposition. In the
case of a computable probability space (Definition~\ref{def:comp_prob_meas}),
each a.e.~decidable generator closed under intersections is a $\pi$-system.
The definition of computable randomness with respect to such a general
space would be the analog of the definition in this paper. 

In particular, this would allow one to define computable randomness
with respect to effective topological spaces with measure \cite{Hertling2003}.
In this case the $\pi$-system is the topological basis. This also
allows one to define Schnorr, Martin-Löf, and weak-2 randomness as
well, namely replace, say, $\Sigma_{1}^{0}$ sets in the definition
with effective unions of sets in the $\pi$-system. This agrees with
most definitions of, say, Martin-Löf randomness in the literature.\footnote{Some authors \cite{Axon2010,Hertling2003} define Martin-Löf randomness
via open covers, even for non-regular topological spaces. Such definitions
have unnatural properties \cite{Miyabe:2014fr}, where as my definition
is more natural. All these methods agree for spaces with an effective
regularity condition.}

Using $\pi$-systems also allows one to define ``abstract'' measure
spaces without points. The computable randoms then become ``abstract
points'' given by generic ultrafilters on the Boolean algebra of
measurable sets \emph{a la} Solovay forcing.

Another possible generalization is to non-computable probability spaces
(on computable Polish spaces). This has been done by Levin \cite{Levin:1976uq}
and extended by others (see \cite{Gacs2005,Bienvenu2011}) for Martin-Löf
randomness in a natural way using \noun{uniform tests} which are
total computable functions from measures to tests. Possibly a similar
approach would work for computable randomness. For example, for $2^{\omega}$,
a uniform test for computable randomness would be a total computable
map $\mu\mapsto\nu$ where $\nu$ is the bounding measure for $\mu$.
This map is enough to define a uniform martingale test for each $\mu$
given by $\nu(\sigma)/\mu(\sigma)$. (I showed in Section~\ref{sec:CR-on-Cantor}
that this martingale is uniformly computable.) Uniform tests for Schnorr
and computable randomness have been used by Miyabe \cite{Miyabe2011}. 

Also, what other applications for a.e.~decidable sets are there in
effective probability theory? The method of Section~\ref{sec:Generalizing-randomness}
basically allows one to treat every computable probability space as
the Cantor space. It is already known that the indicator functions
of a.e.~decidable sets can be used to define $L^{1}$-computable
functions \cite{Miyabe:2013uq}.

However, when it comes to defining classes of points, the method of
Section~\ref{sec:Generalizing-randomness} is specifically for defining
\emph{random} points since such a definition must be a subclass of
the Kurtz randoms. Under certain circumstances, however, one may be
able to use related methods to generalize other definitions. For example,
is the following a generalization of K-triviality to arbitrary computable
probability spaces? Let $K=K_{M}$ where $M$ is a universal prefix-free
machine. Recall, a sequence $x\in2^{\omega}$ is \noun{K-trivial}
(on $(2^{\omega},\lambda)$) if there is some $b$ such that 
\[
\forall n\ K(x\upharpoonright n)\leq K(n)+b
\]
where $K(n)=K(0^{n})$ and $0^{n}$ is the string of $0$'s of length
$n$. Taking a clue from Section~\ref{sec:Kolmogorov-complexity-and-randomness},
call a point $x\in(\mathcal{X},\mu)$ \noun{K-trivial} if there is
some cell decomposition $\mathcal{A}$ and some $b$ such that for
all $n$, 
\[
K(x\upharpoonright_{\mathcal{A}}n)<K(-\log\mu([x\upharpoonright_{\mathcal{A}}n]_{\mathcal{A}}))+b.
\]
(Here we assume $K(\infty)=\infty$.) Does the $\mathcal{A}$-name
or Cauchy-name of $x$ satisfy the other nice degree theoretic properties
of K-triviality, such as being low-for-$(\mathcal{X},\mu)$-random?
(Here I say a Turing degree $\mathbf{d}$ is low-for-$(\mathcal{X},\mu)$-random
if when used as an oracle, $\mathbf{d}$ does not change the class
of Martin-Löf randoms in $(\mathcal{X},\mu)$. Say a point $x\in(\mathcal{X},\mu)$
is low-for-$(\mathcal{X},\mu)$-random if its Turing degree is.) 

If it is a robust definition, how does it relate to the definition
of Melnikov and Nies \cite{Melnikov:2013vn} generalizing K-triviality
to computable Polish spaces (as opposed to probability spaces)? I
conjecture that their definition is equivalent to being low-for-$(\mathcal{X},\mu)$-random
for every computable probability measure $\mu$ of $\mathcal{X}$.

Last, isomorphisms and morphisms offer a useful tool to classify randomness
notions. One may ask what randomness notions (defined for all computable
probability measures on $2^{\omega}$) are invariant under morphisms
or isomorphisms? By Proposition~\ref{prop:morphisms_preserve}, Martin-Löf,
Schnorr, and Kurtz randomness are invariant under morphisms. (This
can easily be extended to $n$-randomness, weak $n$-randomness, and
difference randomness. See \cite{Downey2010,Nies2009} for definitions.)
However, by Corollary~\ref{cor:char_of_ML_randomness}\,(4), there
is no randomness notion between Martin-Löf randomness and computable
randomness that is invariant under morphisms. Is there such a randomness
notion between Schnorr randomness and Martin-Löf randomness? Further,
by Theorem~\ref{thm:isomorphisms_preserve} computable randomness
is invariant under isomorphisms. André Nies pointed out to me that
this is not true of partial computable randomness since it it not
invariant under permutations \cite{Bienvenu2010}. In general what
can be said of full-measure, isomorphism-invariant sets of a computable
probability space $(\mathcal{X},\mu)$? The notions of randomness
connected to computable analysis will most likely be the ones that
are invariant under isomorphisms.\footnote{There is at least one exception to this rule. Avigad \cite{Avigad:2013kx}
discovered that the randomness notion, called UD randomness, characterized
by a theorem of Weyl is incomparable with Kurtz randomness; and therefore,
it is not even preserved by automorphisms.}


\subsection{Acknowledgments}

I would like to thank Mathieu Hoyrup and Cristóbal Rojas for suggesting
to me the use of isomorphisms to define computable randomness outside
Cantor space. I would also like to thank Jeremy Avigad, Laurent Bienvenu,
Peter Gács, Mathieu Hoyrup and Bjørn Kjos-Hanssen for helpful comments
on earlier drafts, as well as Joseph Miller and André Nies for helpful
comments on this research. Last, I would like to thank the reviewer
for their thorough and helpful review.

\bibliographystyle{plain}
\bibliography{comp_rand_paper}

\def\cprime{$'$}
\begin{thebibliography}{10}

\bibitem{Avigad:2013kx}
Jeremy Avigad.
\newblock Uniform distribution and algorithmic randomness.
\newblock {\em J. Symbolic Logic}, 78(1):334--344, 2013.

\bibitem{Axon2010}
Logan~M. Axon.
\newblock {\em Algorithmically random closed sets and probability}.
\newblock ProQuest LLC, Ann Arbor, MI, 2010.
\newblock Thesis (Ph.D.)--University of Notre Dame.

\bibitem{Barmpalias2007}
George Barmpalias, Paul Brodhead, Douglas Cenzer, Seyyed Dashti, and Rebecca
  Weber.
\newblock Algorithmic randomness of closed sets.
\newblock {\em J. Logic Comput.}, 17(6):1041--1062, 2007.

\bibitem{Bienvenu2011}
Laurent Bienvenu, Peter G{\'a}cs, Mathieu Hoyrup, Crist{\'o}bal Rojas, and
  A.~Shen.
\newblock Algorithmic tests and randomness with respect to a class of measures.
\newblock {\em Proceedings of the Steklov Institute of Mathematics},
  274:41--102, 2011.

\bibitem{Bienvenu2010}
Laurent Bienvenu, Rupert H{\"o}lzl, Thorsten Kr{\"a}ling, and Wolfgang Merkle.
\newblock Separations of non-monotonic randomness notions.
\newblock {\em J. Logic and Computation}, 2010.

\bibitem{Bienvenu2009}
Laurent Bienvenu and Wolfgang Merkle.
\newblock Constructive equivalence relations on computable probability
  measures.
\newblock {\em Ann. Pure Appl. Logic}, 160(3):238--254, 2009.

\bibitem{BienvenuSubmitted}
Laurent Bienvenu and Christopher Porter.
\newblock Strong reductions in effective randomness.
\newblock {\em Theoret. Comput. Sci.}, 459:55--68, 2012.

\bibitem{Bienvenu:2009uq}
Laurent Bienvenu, Glenn Shafer, and Alexander Shen.
\newblock On the history of martingales in the study of randomness.
\newblock {\em J. {\'E}lectron. Hist. Probab. Stat.}, 5(1):40, 2009.

\bibitem{Bishop1985}
Errett Bishop and Douglas Bridges.
\newblock {\em Constructive analysis}.
\newblock 279. Springer-Verlag, 1985.

\bibitem{Bosserhoff2008}
Volker Bosserhoff.
\newblock Notions of probabilistic computability on represented spaces.
\newblock {\em J. Universal Computer Science}, 14(6):956--995, 2008.

\bibitem{Brattka2001}
Vasco Brattka.
\newblock Computable versions of {B}aire's category theorem.
\newblock In Jiri Sgall, Ales Pultr, and Petr Kolman, editors, {\em
  Mathematical foundations of computer science, 2001 ({M}ari\'ansk\'e
  {L}\'azn\u e)}, volume 2136 of {\em Lecture Notes in Comput. Sci.}, pages
  224--235. Springer, Berlin, 2001.

\bibitem{Brattka2008}
Vasco Brattka, Peter Hertling, and Klaus Weihrauch.
\newblock A tutorial on computable analysis.
\newblock In {\em New computational paradigms}, pages 425--491. Springer, New
  York, 2008.

\bibitem{Brattka2011}
Vasco Brattka, Joseph~S. Miller, and Andr{\'e} Nies.
\newblock Randomness and differentiability.
\newblock {\em Trans. Amer. Math. Soc.}
\newblock To appear.

\bibitem{Downey2004a}
Rodney~G Downey, Evan Griffiths, and Geoffrey LaForte.
\newblock On {S}chnorr and computable randomness, martingales, and machines.
\newblock {\em Math. Logic Quart.}, 50(6):613--627, 2004.

\bibitem{Downey2010}
Rodney~G. Downey and Denis~R. Hirschfeldt.
\newblock {\em Algorithmic randomness and complexity}.
\newblock Theory and Applications of Computability. Springer, New York, 2010.

\bibitem{Freer2011}
Cameron~E. Freer and Daniel~M. Roy.
\newblock Computable de {F}inetti measures.
\newblock {\em Annals of Pure and Applied Logic}, 2011.

\bibitem{Gacs:1980fj}
P{\'e}ter G{\'a}cs.
\newblock Exact expressions for some randomness tests.
\newblock {\em Z. Math. Logik Grundlag. Math.}, 26(5):385--394, 1980.

\bibitem{Gacs2005}
Peter G{\'a}cs.
\newblock Uniform test of algorithmic randomness over a general space.
\newblock {\em Theoret. Comput. Sci.}, 341(1-3):91--137, 2005.

\bibitem{Gacs2011}
Peter G{\'a}cs, Mathieu Hoyrup, and Crist{\'o}bal Rojas.
\newblock Randomness on computable probability spaces---a dynamical point of
  view.
\newblock {\em Theory Comput. Syst.}, 48(3):465--485, 2011.

\bibitem{Hemmerling:2002aa}
Armin Hemmerling.
\newblock Effective metric spaces and representations of the reals.
\newblock {\em Theoret. Comput. Sci.}, 284(2):347--372, 2002.
\newblock Computability and complexity in analysis (Castle Dagstuhl, 1999).

\bibitem{Hertling:1997fk}
Peter Hertling and Yongge Wang.
\newblock Invariance properties of random sequences.
\newblock {\em J.UCS}, 3(11):1241--1249 (electronic), 1997.

\bibitem{Hertling2003}
Peter Hertling and Klaus Weihrauch.
\newblock Random elements in effective topological spaces with measure.
\newblock {\em Inform. and Comput.}, 181(1):32--56, 2003.

\bibitem{Hitchcock2006}
John~M. Hitchcock and Jack~H. Lutz.
\newblock Why computational complexity requires stricter martingales.
\newblock {\em Theory Comput. Syst.}, 39(2):277--296, 2006.

\bibitem{Hoyrup2009}
Mathieu Hoyrup and Crist{\'o}bal Rojas.
\newblock Computability of probability measures and {M}artin-{L}\"of randomness
  over metric spaces.
\newblock {\em Inform. and Comput.}, 207(7):830--847, 2009.

\bibitem{Kastermans2010}
Bart Kastermans and Steffen Lempp.
\newblock Comparing notions of randomness.
\newblock {\em Theoret. Comput. Sci.}, 411(3):602--616, 2010.

\bibitem{Kjos-Hanssen:2009hc}
Bj{\o}rn Kjos-Hanssen and Anil Nerode.
\newblock Effective dimension of points visited by {B}rownian motion.
\newblock {\em Theoret. Comput. Sci.}, 410(4-5):347--354, 2009.

\bibitem{Levin:1976uq}
L.~A. Levin.
\newblock Uniform tests for randomness.
\newblock {\em Dokl. Akad. Nauk SSSR}, 227(1):33--35, 1976.

\bibitem{Melnikov:2013vn}
Alexander Melnikov and Andr{{\'e}} Nies.
\newblock {$K$}-triviality in computable metric spaces.
\newblock {\em Proc. Amer. Math. Soc.}, 141(8):2885--2899, 2013.

\bibitem{Merkle2003}
Wolfgang Merkle.
\newblock The {K}olmogorov-{L}oveland stochastic sequences are not closed under
  selecting subsequences.
\newblock {\em J. Symbolic Logic}, 68(4):1362--1376, 2003.

\bibitem{Merkle2006}
Wolfgang Merkle, Nenad Mihailovi{\'c}, and Theodore~A. Slaman.
\newblock Some results on effective randomness.
\newblock {\em Theory Comput. Syst.}, 39(5):707--721, 2006.

\bibitem{Merkle2006a}
Wolfgang Merkle, Joseph~S. Miller, Andr{\'e} Nies, Jan Reimann, and Frank
  Stephan.
\newblock Kolmogorov-{L}oveland randomness and stochasticity.
\newblock {\em Ann. Pure Appl. Logic}, 138(1-3):183--210, 2006.

\bibitem{Miller:2004ly}
Joseph~S. Miller.
\newblock Degrees of unsolvability of continuous functions.
\newblock {\em J. Symbolic Logic}, 69(2):555--584, 2004.

\bibitem{Miyabe2011}
Kenshi Miyabe.
\newblock Truth-table {S}chnorr randomness and truth-table reducible
  randomness.
\newblock {\em MLQ Math. Log. Q.}, 57(3):323--338, 2011.

\bibitem{Miyabe:2013uq}
Kenshi Miyabe.
\newblock {$L^1$}-computability, layerwise computability and {S}olovay
  reducibility.
\newblock {\em Computability}, 2(1):15--29, 2013.

\bibitem{Miyabe:2014fr}
Kenshi Miyabe.
\newblock Algorithmic randomness over general spaces.
\newblock {\em Mathematical Logic Quarterly}, 60(3):184--204, 2014.

\bibitem{Muchnik:1998rt}
Andrei~A. Muchnik, Alexei~L. Semenov, and Vladimir~A. Uspensky.
\newblock Mathematical metaphysics of randomness.
\newblock {\em Theoret. Comput. Sci.}, 207(2):263--317, 1998.

\bibitem{Nies2009}
Andr{\'e} Nies.
\newblock {\em Computability and randomness}, volume~51 of {\em Oxford Logic
  Guides}.
\newblock Oxford University Press, Oxford, 2009.

\bibitem{Pathak:2014fk}
Noopur Pathak, Crist{{\'o}}bal Rojas, and Stephen~G. Simpson.
\newblock Schnorr randomness and the {L}ebesgue differentiation theorem.
\newblock {\em Proc. Amer. Math. Soc.}, 142(1):335--349, 2014.

\bibitem{Pour-El1989}
Marian~B. Pour-El and J.~Ian Richards.
\newblock {\em Computability in analysis and physics}.
\newblock Perspectives in Mathematical Logic. Springer-Verlag, Berlin, 1989.

\bibitem{Schnorr:1977fk}
C.-P. Schnorr and P.~Fuchs.
\newblock General random sequences and learnable sequences.
\newblock {\em J. Symbolic Logic}, 42(3):329--340, 1977.

\bibitem{Schnorr1971}
Claus-Peter Schnorr.
\newblock A unified approach to the definition of random sequences.
\newblock {\em Math. Systems Theory}, 5:246--258, 1971.

\bibitem{Schroder:2007kx}
Matthias Schr{\"o}der.
\newblock Admissible representations for probability measures.
\newblock {\em MLQ Math. Log. Q.}, 53(4-5):431--445, 2007.

\bibitem{Silveira:2011fk}
Javier~Gonzalo Silveira.
\newblock Invariancia por cambio de base de la aleatoriedad computable y la
  aleatoriedad con recursos acotados.
\newblock Master's thesis, University of Buenos Aires, 2011.

\bibitem{Turetsky:2012kx}
Daniel Turetsky.
\newblock Partial martingales for computable measures.
\newblock In Andr{\'e} Nies, editor, {\em Logic Blog}, 2014.
\newblock Available at
  \url{https://dl.dropboxusercontent.com/u/370127/Blog/Blog2014.pdf}.

\bibitem{Vcprimeyugin1998}
V.~V. V'yugin.
\newblock Ergodic theorems for individual random sequences.
\newblock {\em Theoret. Comput. Sci.}, 207(2):343--361, 1998.

\bibitem{Weihrauch2000}
Klaus Weihrauch.
\newblock {\em Computable analysis: An introduction}.
\newblock Texts in Theoretical Computer Science. An EATCS Series.
  Springer-Verlag, Berlin, 2000.

\bibitem{Williams1991}
David Williams.
\newblock {\em Probability with martingales}.
\newblock Cambridge Mathematical Textbooks. Cambridge University Press,
  Cambridge, 1991.

\end{thebibliography}

\end{document}